\documentclass[11pt,a4paper,reqno]{amsart}
\usepackage{amsmath, amssymb, latexsym, enumerate,  graphicx,tikz, stmaryrd, eufrak,enumitem}%MnSymbol}

\usepackage{thmtools}

\pagestyle{plain}
\usepackage[pdftex]{hyperref}
\usepackage{cleveref}
\usepackage{bbm}
\usepackage{cases}
\usepackage{array}
\usepackage{tikz-cd}
\usepackage[all]{xy}

\usepackage[hmarginratio={1:1},vmarginratio={1:1},lmargin=80.0pt,tmargin=90.0pt]{geometry}

\usepackage{tikz}
\tikzset{anchorbase/.style={baseline={([yshift=-0.5ex]current bounding box.center)}}}
\usetikzlibrary{decorations.markings}
\usetikzlibrary{decorations.pathreplacing}
\usetikzlibrary{arrows,shapes,positioning,backgrounds}
%%multiple arrow options
\tikzstyle directed=[postaction={decorate,decoration={markings,
    mark=at position #1 with {\arrow{>}}}}]
\tikzstyle rdirected=[postaction={decorate,decoration={markings,
    mark=at position #1 with {\arrow{<}}}}]
    
\usepackage{graphicx}
\usepackage{nicefrac}

\setcounter{tocdepth}{1}

 \newlength{\baseunit}               % the basic unit length
         % width of the picture
          % depth of the picture
                % with between left margin and picture
 \newcount{\numlines}                % depth of picture (in number of lines)
 \setlength{\baseunit}{0.05ex}

%\renewcommand{\thefigure}

%  \newcommand{\getfig}[2] { %           \setlength{\unitlength}{#2\baseunit} %            \input #1.tex }
                         % A macro to input the picture.  Use
                         % \getfig{name}{scale}

% Begin template %\documentclass[12pt]{amsart} %\usepackage{epic,eepic} %\parskip=12pt

\newtheorem{theorem}[subsubsection]{Theorem}
\newtheorem{lemma}[theorem]{Lemma}
\newtheorem{prop}[theorem]{Proposition}
\newtheorem{corollary}[subsubsection]{Corollary}
\newtheorem{conjecture}[theorem]{Conjecture}

\theoremstyle{definition}
\newtheorem{definition}[subsubsection]{Definition}

\newtheorem{remark}[theorem]{Remark}

\newtheorem{example}[subsubsection]{Example}

\newtheorem{question}[theorem]{Question}

\newcommand{\fk}{\mathbf{k}}

\newcommand{\bk}{\mathbf{k}}
\newcommand{\JW}{\mathrm{JW}}
\newcommand{\cJW}{\mathcal{J}\mathcal{W}}
\newcommand{\cJWR}{\mathcal{J}\mathcal{W}\mathcal{R}}
\newcommand{\cY}{\mathcal{Y}}
\newcommand{\cC}{\mathcal{C}}

\numberwithin{equation}{section}

\newcommand{\cK}{\mathcal{K}}

\newcommand{\ev}{\mathrm{ev}}
\newcommand{\co}{\mathrm{co}}

\newcommand{\bD}{\mathbf{D}}

\newcommand{\Dim}{\mathrm{Dim}}

\newcommand{\TL}{\mathrm{TL}}

\newcommand{\gd}{\mathsf{gd}}

\newcommand{\FPdim}{\mathrm{FPdim}}

\newcommand{\cD}{\mathcal{D}}

\newcommand{\Ver}{\mathsf{Ver}}

\newcommand{\unit}{{\mathbbm{1}}}
\newcommand{\tto}{\twoheadrightarrow}

\newcommand{\mN}{\mathbb{N}}
\newcommand{\mZ}{\mathbb{Z}}

\newcommand{\mC}{\mathbb{C}}
\newcommand{\mR}{\mathbb{R}}
\newcommand{\mQ}{\mathbb{Q}}

\newcommand{\End}{\mathrm{End}}

\newcommand{\Hom}{\mathrm{Hom}}

\newcommand{\Sym}{\mathrm{Sym}}
\newcommand{\id}{\mathrm{id}}

\newcommand{\Ind}{\mathrm{Ind}}

\newcommand{\Vecc}{\mathsf{Vec}}

\newcommand{\sVec}{\mathsf{sVec}}

\newcommand{\Tilt}{\mathsf{Tilt}}

\newcommand{\mE}{\mathbb{E}}
\newcommand{\mV}{\mathbb{V}}
\newcommand{\mD}{\mathbb{D}}
\newcommand{\Ch}{\mathrm{Ch}}
\newcommand{\cA}{\mathcal{A}}

\newcommand{\Inj}{\mathrm{Inj}}

\newcommand{\bR}{\mathbf{R}}

\newcommand{\mF}{\mathbb{F}}

\newcommand{\gr}{\mathrm{gr}}
\newcommand{\Fr}{\mathrm{Fr}}
\newcommand{\Cone}{\mathrm{Cone}}

\begin{document}
\title{$N$-spherical functors and tensor categories}

\author{Kevin Coulembier}
\address{School of Mathematics and Statistics, University of Sydney, Australia}
\email{kevin.coulembier@sydney.edu.au}
\author{Pavel Etingof}
\address{Department of Mathematics, MIT, Cambridge, MA USA 02139
}\email{etingof@math.mit.edu}

\date{\today}
%\thanks{.}
%\subjclass[2010]{18D10, 18D15, 18F20, 18F10, 20G05}

\keywords{tensor categories, $N$-spherical functors, bounded objects, Verlinde categories, Jones-Wenzl idempotents, Frobenius-Perron dimension}

\begin{abstract}
We apply the recently introduced notion, due to Dyckerhoff, Kapranov and Schechtman, of $N$-spherical functors of stable infinity categories, which generalise spherical functors, to the setting of monoidal categories. We call an object $N$-bounded if the corresponding regular endofunctor on the derived category is $N$-spherical. Besides giving new examples of $N$-spherical functors, the notion of $N$-bounded objects gives surprising connections with Jones-Wenzl idempotents, Frobenius-Perron dimensions and central conjectures in the field of symmetric tensor categories in positive characteristic.
\end{abstract}

\maketitle

\section*{Introduction}

In \cite{DKS}, Dyckerhoff, Kapranov and Schechtman introduced the notion of an `$N$-spherical' functor between stable infinity categories, for $N$ a positive integer. This is a generalisation of the more standard case $N=4$ of a `spherical' functor between triangulated categories. In particular, the definition generalises Kuznetsov's version of the definition of spherical functors in \cite{Ku}. It follows immediately that 3-spherical functors are precisely equivalences, 2-spherical functors are the zero functors and no functors are 1-spherical. The notion of $N$-spherical functors for $N>4$ is fundamentally new.

In the current paper, we apply the definition and theory of \cite{DKS} to the derived category of a tensor category, see \cite{Del02, EGNO}. Concretely, we call an object $X$ of a tensor category {\em $N$-bounded} if the endofunctor $-\otimes X$ of the derived category is $N$-spherical in the sense of \cite{DKS}. This leads to surprisingly strong results and interesting connections with recent developments in the theory of symmetric tensor categories in \cite{BE1, BEO, AbEnv, CEO1, Os}. Moreover, we show that many objects in known tensor categories are $N$-bounded, providing a large number of new examples of $N$-spherical functors. Contrary to most known examples of $N$-spherical functors, which are known to exist indirectly as the gluing functor associated to some periodic semi-orthogonal decomposition, it is the functors themselves which appear naturally here.

For a variety of reasons, the paper is written in a self-contained way, in the sense that we do not rely on the results in \cite{DKS}, but derive directly the required special cases (sometimes with near identical proof). However, the inspiration for the current paper originates entirely from~\cite{DKS}.

Now we describe the paper in more detail.
In Section~\ref{SecPrel} we introduce some required background on tensor categories and Temperley-Lieb algebras. 

Let $\cA$ be an additive monoidal category. In Section~\ref{SecAdd}, we introduce the continuant complexes $\mE_n(X)$, which are chain complexes in $\cA$ associated to every natural number $n$ and every object $X$ in $\cA$ admitting all duals. They categorify Euler's continuant polynomials and are constructed from the evaluations of $X$ and its duals. For example $\mE_2(X)$ is simply the complex
$$0\to X^\ast\otimes X\xrightarrow{\ev} \unit\to 0.$$ 
We also have dual complexes $\mE^n(X)$, constructed from coevaluation. These complexes are based on the theory of \cite{DKS}, so that the endofunctor $-\otimes X$ of the homotopy category~$\cK(\cA)$ is $N$-spherical if and only if $\mE_{N-1}(X)$ and $\mE^{N-1}(X)$ vanish in $\cK(\cA)$. We refer to such objects~$X$ as {\em homotopically $N$-bounded}. Also in Section~\ref{SecAdd} we derive several exact triangles in $\cK(\cA)$ based on the continuant complexes, mostly following \cite{DKS}. These form the backbone for the later applications to tensor categories.

In Section~\ref{SecPiv} we focus on~$X\in\cA$ for which the second left dual $X^{\ast\ast}$ is isomorphic to~$X$, {\it i.e.} the pivotal case. Our main result here is that the homotopical $N$-boundedness of such $X$ is largely governed by the Jones-Wenzl idempotents. Concretely, if the appropriate Jones-Wenzl idempotent exists, then $\mE_n(X)$ is isomorphic in $\cK(\cA)$ to a complex contained in one homological degree, given by the evaluation of the idempotent on some iterated product of~$X^\ast$ and~$X$. In particular, homotopical $N$-boundedness can then be described within $\cA$, rather than its homotopy category. Also the notion of `rotatability' of Jones-Wenzl idempotents, see \cite{El, Ha}, arises naturally here.

In Section~\ref{SecTens1}, we zoom in on tensor categories, meaning rigid abelian $\bk$-linear monoidal categories $\cC$ for which the endomorphism algebra of the tensor unit is the base field $\bk$. In this case we can focus on the weaker assumption that $-\otimes X$ be $N$-spherical on the derived category $\cD(\cC)$ (in the sense of \cite{DKS}), rather than the homotopy category. We refer to such objects as {\em $N$-bounded} objects, and the explicit condition now becomes that $\mE_{N-1}(X)$ and $\mE^{N-1}(X)$ must be acyclic, or in other words, vanish in $\cD(\cC)$. We show that, remarkably, this condition is equivalent with the image of $\mE_{N-1}(X)$, or equivalently of $\mE^{N-1}(X)$, being zero in the Grothendieck ring~$K_0(\cC)$. Henceforth we say that $X$ is `bounded' when it is $N$-bounded for some $N$ and `unbounded' otherwise. We show that bounded objects must be simple. As it turns out, examples of $N$-bounded objects, for all $N>1$, can be found in well-known tensor categories, even when restricting to fusion categories.

In Section~\ref{SecTens2} we focus on various flavours of tensor categories. We show that in finite tensor categories, the homological property of $N$-boundedness is governed entirely by Frobenius-Perron dimension. Concretely, if the Frobenius-Perron dimension of an object is $2\cos(\pi/N)$, then~$X$ is $M$-bounded if and only if $N$ divides $M$. An object is unbounded if and only if its Frobenius-Perron dimension is 2 or higher. We also focus on symmetric tensor categories. We show that the only bounded objects in symmetric tensor categories in characteristic zero are zero or invertible. In sharp contrast, there are interesting examples of bounded objects when the characteristic of $\bk$ is $p>0$. In \cite{BE1, BEO, AbEnv}, new `incompressible' symmetric tensor categories $\Ver_{p^n}$ were introduced and it was conjectured in \cite{BEO} that these play the same role in positive characteristic as the category of supervector spaces plays in characteristic zero in Deligne's results \cite{Del02}. Thus far, only the `$n=1$ case' of this conjecture is proven, see \cite{CEO1}. We show that the generator of $\Ver_{p^n}$ is $p^n$-bounded. Moreover, for $p>2$, there is a bijection between tensor subcategories of $\Ver_{p^n}$ and bounded objects. The aforementioned \cite[Conjecture~1.4]{BEO} implies a complete classification of $N$-bounded objects in symmetric tensor categories in positive characteristic. We prove the validity of this classification for finite tensor categories and for small $N$, hence providing new evidence towards \cite[Conjecture~1.4]{BEO}. We also consider pivotal tensor categories and non-pivotal examples of bounded objects.

Bounded objects in symmetric tensor categories yield examples of objects for which both symmetric powers and exterior powers vanish in high enough degrees. In Section~\ref{SecFiniteP} we focus more on the latter property.

\section{Preliminaries}\label{SecPrel}

\subsection{Notation and conventions}

\subsubsection{} Set $\mN=\{0,1,2,\cdots\}$. We refer to an idempotent complete, additive category as a pseudo-abelian category. If $X$ is an object in an abelian category, we denote its length (the supremum of the lengths of finite strictly ascending filtrations) by $\ell(X)\in\mN\cup\{\infty\}$.

\subsubsection{}

For an additive category $\cA$ we consider the category of chain complexes $\Ch(\cA)$, its homotopy category $\cK(\cA)$ and, in case $\cA$ is abelian, its derived category $\cD(\cA)$. We will use homological conventions and notation for chain complexes. For a chain complex $(C_\bullet, d_\bullet)$, abbreviated to $C$, its shift $C[1]$ is the complex with $C[1]_i=C_{i-1}$ and $- d_i: C[1]_{i+1}\to C[1]_{i}$.

For a chain map $f$ from $(C_\bullet,d^C_\bullet)$ to $(D_\bullet, d_\bullet^D)$, the mapping cone $\Cone(f)$ is
$$\Cone(f)_i=C_{i-1}\oplus D_i $$
with differential
$$
\left(\begin{array}{cc}
-d_{i-1}^C& 0\\
- f_{i-1}& d_i^D
\end{array}\right)\; : \; C_{i-1}\oplus D_i\to C_{i-2}\oplus D_{i-1}.
$$

We denote the canonical fully faithful embedding $\cA\to\Ch(\cA)$ by $X\mapsto X[0]$.

\subsubsection{}\label{duals}We refer to \cite{EGNO} for the basics on monoidal categories.  Let $(\cA,\otimes,\unit)$ be a monoidal category. 

We call an object $X\in\cA$ {\bf rigid} if it admits `all duals'. By this, we mean that $X$ admits a left dual $(X^\ast,\ev_X,\co_X)$, which in turn admits a left dual etc., and similar for right duals. We write $X^{(i)}$ for the $i$-th left adjoint, if $i\in\mN$, or the $-i$-th right adjoint if $i\in\mZ_{<0}$.

If $X\in\cA$ has a left and right adjoint, it leads to adjoint functors on $\cA$
$$-\otimes {}^\ast X \;\,\dashv \,\;-\otimes X\,\;\dashv\;\, -\otimes X^\ast.$$

\subsubsection{} We will only consider {\em additive} monoidal categories $\cA$, where we always assume that the tensor product is additive in each variable. The categories $\Ch(\cA)$, $\cK(\cA)$ and $\cD(\cA)$ inherit monoidal structures, and for rigid objects the adjoint endofunctors on $\cA$ from \ref{duals} extend to these categories.

\subsection{Tensor categories} 
Let $\bk$ be a field.

\subsubsection{}Following \cite{EGNO}, we call an essentially small $\bk$-linear monoidal category $(\cC,\otimes,\unit)$ a {\bf tensor category over $\bk$} if
\begin{enumerate}
\item $\bk\to \End_{\cC}(\unit)$ is an isomorphism;
\item $(\cC,\otimes,\unit)$ is rigid, so every object has a left and right dual;
\item $\cC$ is abelian, and all objects have finite length.
\end{enumerate}
It follows that $\unit$ is simple, see \cite[Theorem~4.3.8]{EGNO}.
It then further follows that morphism spaces in $\cC$ are finite dimensional, since $\Hom_{\cC}(X,Y)$ is isomorphic to $\Hom_{\cC}(\unit, Y\otimes X^\ast)$.

We will say that a tensor category is {\bf quasi-finite} if it is a union of tensor categories with finitely many simple objects (up to isomorphism). Then we have a well-defined notion of Frobenius-Perron dimension, see \cite[Proposition~4.5.7]{EGNO}. A {\bf tensor functor} between two tensor categories over $\bk$ is a monoidal functor which is $\bk$-linear and exact (and consequently also faithful). 

We say that $X\in\cC$ {\bf generates} $\cC$ as a tensor category if every object in $\cC$ is a subquotient of a direct sum of tensor products of $X$ and its (iterated) duals.

\subsubsection{Growth dimension}
For a tensor category $\cC$ and $X\in\cC$, we set
$$\gd(X)=\lim_{i\to\infty}\ell(X^{\otimes i})^{1/i}\,\in\, \mR\cup\{\infty\},$$
where the limit is well-defined, see for instance \cite{CEO1, CEO3, EK}. We say that X is of {\bf moderate growth} if $\gd(X)<\infty$ and that $\cC$ is of moderate growth if every object is of moderate growth.
In case $\cC$ is quasi-finite, then
$$\gd(X)\,=\, \FPdim(X),$$ by \cite[Lemma~8.3]{CEO1}.

If $\cC$ is of moderate growth and $\gd(X\oplus Y)=\gd(X)+\gd(Y)$ for all $X,Y\in\cC$, then $\gd$ defines unambiguously an additive function $K_0(\cC)\to\mR$. Even without this assumption, we do have

\begin{lemma}\label{Lemgd}
Consider $Y\in\cC$ of moderate growth.
\begin{enumerate}
\item Let $f(x)\in\mZ[x]$ be a polynomial with non-negative coefficients. For $V\in\cC$ with $[V]=f([Y])$ in $K_0(\cC)$, we have
$$\gd(V)\,=\, f(\gd(Y)).$$
\item Let $g(x)\in\mZ[x]$ be arbitrary.
\begin{enumerate}
\item If $g([Y])\in K_0(\cC)$ equals the class of an non-zero object in $\cC$, then $g(\gd(Y))>0$.
\item If $g([Y])=0$, then $g(\gd(Y))=0$.
\end{enumerate}
\end{enumerate}
\end{lemma}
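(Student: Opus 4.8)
The plan is to reduce everything to two elementary facts about the length function. First, $\ell$ is additive on short exact sequences, hence descends to a group homomorphism $\ell\colon K_0(\cC)\to\mZ$ sending each simple to $1$; in particular $\ell(X)$ depends only on $[X]$, and $\ell(X)\geq 1$ iff $X\neq 0$. Second, the reverse submultiplicativity $\ell(A\otimes B)\geq\ell(A)\ell(B)$: since tensoring is exact in a tensor category, composition factors of $A\otimes B$ are obtained by tensoring those of $A$ with those of $B$, and the tensor product of two nonzero objects is nonzero (as $\unit$ is a quotient of $Z^\ast\otimes Z$ via $\ev_Z$ for $Z\neq 0$). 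Consequently $m\mapsto\log\ell(Y^{\otimes m})$ is superadditive, so Fekete's lemma gives $\gd(Y)=\sup_m\ell(Y^{\otimes m})^{1/m}$; hence $\ell(Y^{\otimes m})\leq\gd(Y)^m$ for all $m$, and $\ell(Y^{\otimes m})$ is non-decreasing in $m$. I will also record the matching lower bound: for any nonzero $Z$ of moderate growth and any $\delta>0$ there is $c=c(Z,\delta)>0$ with $\ell(Z^{\otimes m})\geq c(\gd(Z)-\delta)^m$ for all $m$ (pick $m_0$ with $\ell(Z^{\otimes m_0})\geq(\gd(Z)-\delta)^{m_0}$ from the sup formula, then use supermultiplicativity and monotonicity on $m=m_0q+r$). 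One may assume $Y\neq 0$ throughout, the case $Y=0$ reducing all statements to their constant terms.

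For part (1), assume $f\neq 0$ (otherwise $V=0$) and expand $[V^{\otimes n}]=[V]^n=f([Y])^n=\sum_m c_{n,m}[Y^{\otimes m}]$ with $c_{n,m}=[x^m]f(x)^n\geq 0$; applying $\ell$ gives $\ell(V^{\otimes n})=\sum_m c_{n,m}\ell(Y^{\otimes m})$. The upper bound $\ell(Y^{\otimes m})\leq\gd(Y)^m$ yields $\ell(V^{\otimes n})\leq f(\gd(Y))^n$, so $\gd(V)\leq f(\gd(Y))$ (and in particular $V$ has moderate growth); the lower bound $\ell(Y^{\otimes m})\geq c(\gd(Y)-\delta)^m$ yields $\ell(V^{\otimes n})\geq c\,f(\gd(Y)-\delta)^n$, so $\gd(V)\geq f(\gd(Y)-\delta)$, and letting $\delta\to 0$ finishes by continuity of $f$.

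Part (2)(b) is then immediate: write $g=g_+-g_-$ with $g_\pm\in\mN[x]$; from $g([Y])=0$ we get $g_+([Y])=g_-([Y])$, so any object $V$ with $[V]=g_+([Y])$ (e.g. $\bigoplus_m(Y^{\otimes m})^{\oplus[x^m]g_+}$) also satisfies $[V]=g_-([Y])$, and part (1) applied twice gives $g_+(\gd(Y))=\gd(V)=g_-(\gd(Y))$. For part (2)(a), again write $g=g_+-g_-$, $g_\pm\in\mN[x]$. If $g_-=0$ then $[W]=g_+([Y])$ with $g_+\in\mN[x]$, so part (1) gives $g(\gd(Y))=\gd(W)\geq 1$ since $W\neq 0$; so assume $g_-\neq 0$, take $R$ with $[R]=g_-([Y])$ as above (so $R\neq 0$), and set $V=W\oplus R$, so $[V]=g_+([Y])$. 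The crucial point is that $[W]=g([Y])$ and $[R]=g_-([Y])$ both lie in the \emph{commutative} subring of $K_0(\cC)$ generated by $[Y]$, hence commute; so each of the $\binom{n}{t}$ words in the expansion of $(W\oplus R)^{\otimes n}$ having exactly $t$ tensor factors equal to $W$ has class $[W]^t[R]^{n-t}=[W^{\otimes t}\otimes R^{\otimes(n-t)}]$, hence length $\ell(W^{\otimes t}\otimes R^{\otimes(n-t)})$, irrespective of the positions of the $W$'s. Therefore
$$\ell(V^{\otimes n})=\sum_{t=0}^n\binom{n}{t}\ell\big(W^{\otimes t}\otimes R^{\otimes(n-t)}\big)\ \geq\ \sum_{t=0}^n\binom{n}{t}\ell(W^{\otimes t})\,\ell(R^{\otimes(n-t)}),$$
and feeding in $\ell(W^{\otimes t})\geq c_W(\gd(W)-\delta)^t$ and $\ell(R^{\otimes(n-t)})\geq c_R(\gd(R)-\delta)^{n-t}$ collapses the sum to $c_Wc_R(\gd(W)+\gd(R)-2\delta)^n$. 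Hence $\gd(V)\geq\gd(W)+\gd(R)$, and combining with part (1), $g(\gd(Y))=\gd(V)-\gd(R)\geq\gd(W)\geq 1>0$.

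The one genuinely delicate point is this strict inequality in (2)(a). Merely observing that $R$ is a summand of $V$ (giving $\gd(R)\leq\gd(V)$, i.e. $g(\gd(Y))\geq 0$), or that the words containing a single $W$ add only a polynomial-in-$n$ factor, does not suffice, and additivity $\gd(A\oplus B)=\gd(A)+\gd(B)$ is known to fail in general. What rescues the argument is exactly that $[W]$ and $[R]$ are polynomial expressions in the single class $[Y]$, hence commute in $K_0(\cC)$, so that all $\binom{n}{t}$ words of a given type have \emph{equal} length and one recovers the full exponential gain $(\gd(W)+\gd(R))^n$. I would still double-check the harmless boundary cases ($Y=0$, $f=0$, $g_\pm=0$, $R=0$) and cite from \cite{EGNO} the exactness of $-\otimes Z$ and the nonvanishing of tensor products of nonzero objects.
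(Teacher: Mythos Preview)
Your proof is correct. The overall architecture matches the paper's---both establish (iii) $\ell(Y^{\otimes m})\le\gd(Y)^m$ for the upper bound in part (1), and both handle part (2) by writing $g=g_+-g_-$ and applying part (1) to objects with classes $g_\pm([Y])$. The difference lies in how the lower bound and the strict inequality in (2)(a) are obtained. The paper records as ``immediate'' the general inequality $\gd(A\oplus B)\ge\gd(A)+\gd(B)$ and then argues that a proper inclusion $V_-\subsetneq V_+$ forces $\gd(V_-)<\gd(V_+)$. You instead derive what is needed directly: Fekete's lemma gives $\gd(Y)=\sup_m\ell(Y^{\otimes m})^{1/m}$ and hence a uniform lower bound $\ell(Y^{\otimes m})\ge c(\gd(Y)-\delta)^m$, and in (2)(a) you exploit that $[W]$ and $[R]$ lie in the \emph{commutative} subring $\mZ[[Y]]\subset K_0(\cC)$, so that every word in $(W\oplus R)^{\otimes n}$ with $t$ factors $W$ has the same length $\ell(W^{\otimes t}\otimes R^{\otimes(n-t)})$, collapsing the binomial sum to $(\gd(W)+\gd(R)-2\delta)^n$. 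This is a genuine refinement: you isolate precisely where the single-generator hypothesis enters and never rely on superadditivity of $\gd$ for arbitrary pairs of objects, a point the paper leaves implicit. The paper's route is shorter to state; yours is more self-contained and makes clear that the argument would not go through for a polynomial in two non-commuting classes.
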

\begin{proof}
For part (1), we start by observing that, for all $A,B\in\cC$ and $i\in\mN$ we have
\begin{enumerate}
\item[(i)] $\gd(A\oplus B)\ge \gd(A)+\gd(B)$;
\item[(ii)] $\gd(A^{\otimes i})=\gd(A)^i$;
\item[(iii)] $\ell(A^{\otimes i})\le \gd(A)^i$.
\end{enumerate}
Indeed, (i) and (ii) are immediate, and (iii) follows from (ii). That $\gd(V)\ge f(\gd(Y))$ follows from (i) and (ii). From (iii) we can derive that $\ell(V^{\otimes n})\le f(\gd Y)^n$, for all $n$, from which $\gd(V)\le f(\gd(Y))$ follows.

%we prove that $\gd(V)\le f(\gd(Y))$, with the other direction being proved similarly. For every $\epsilon>0$, there exists $m\in\mN$ for which $\ell(Y^{\otimes n})\le (\gd(Y)+\epsilon)^n$ for all $n\ge m$. It then follows that
%$$\ell(V^{\otimes n})\;\le \; (f(\gd(Y)+\epsilon))^n$$
%for all $n\ge m$ and consequently $\gd(V)\le f(\gd(Y)+\epsilon)$.

Part (2) is an application of part (1). Indeed, write $g=g_+-g_-$ for polynomials $g_{\pm}$ with non-negative coefficients. Define $V_{\pm}\in\cC$ as the semisimple objects with $[V_{\pm}]=g_{\pm}([X])$. By part (1)
$$g(\gd(Y) )\,=\, \gd(V_+)-\gd(V_-).$$ 
Under assumption (a), $V_-$ is a proper subobject of $V_+$, which implies that $\gd(V_-)<\gd(V_+)$. Under assumption (b), $V_-\simeq V_+$.
\end{proof}

\subsubsection{Symmetric tensor categories}
Let $\cC$ be a symmetric (= equipped with a symmetric braiding $\sigma$) tensor category.

If, for $X\in\cC$, the object $X^\ast\otimes X$ is of moderate growth, then the tensor subcategory of $\cC$ generated by $X$ is of moderate growth. For instance, we then have
\begin{equation}
\label{DefDX}\gd(X)\,=\,\gd(X^\ast)\,\le\, \sqrt{\gd(X^\ast\otimes X)}\,=:\, \bD(X).
\end{equation}

For $X\in\cC$, we let $\wedge^2X$ denote the image of the endomorphism $1-\sigma_{X,X}$ of $X^{\otimes 2}$, where $\sigma$ is the braiding. Whenever $\mathrm{char}(\bk)\not=2$, it is more common to define $\wedge^2X$ as the kernel of $1+\sigma_{X,X}$, which is then equivalent. We define the symmetric powers of $X$ via the short exact sequence
$$0\to \sum_{i+j=n-2} X^{\otimes i}\otimes \wedge^2 X\otimes X^{\otimes j}\to X^{\otimes n}\to\Sym^n X\to 0.$$
We define the (dual or divided) exterior powers as subobjects
$$\wedge^nX = \bigcap_{i+j=n-2}X^{\otimes i}\otimes \wedge^2 X\otimes X^{\otimes j}\subset X^{\otimes n}.$$

\subsubsection{Higher Verlinde categories}
Let $\bk$ be an algebraically closed field of characteristic $p>0$. We refer to \cite{BEO} for the definition and conventions regarding the finite symmetric tensor cateogries $\Ver_{p^n}$ over $\bk$ for $n\in\mZ_{>0}$. In particular, we follow the labelling of simple objects from \cite{BEO} where $L_0=\unit$. This deviates from the convention in \cite{Os} for $n=1$.

\subsection{Chebyshev polynomials}\label{SecCheb}

\subsubsection{}\label{DefCheb}
For $n\in\mN$, define the polynomials $\kappa_n(x)\in\mZ[x]$ by
$$\kappa_0(x)=1,\quad\kappa_1(x)=x,\quad\mbox{and}\quad \kappa_{n+1}(x)=x\kappa_n(x)-\kappa_{n-1}(x).$$
In other words, $\kappa_n(x) $ is $ U_n(x/2),$
with $U_n$ the Chebyshev polynomials of the second kind. In particular
$$\kappa_{n}(x)\;=\;\sum_{i=0}^{\lfloor n/2\rfloor}(-1)^i\binom{n-i}{i}x^{n-2i},\quad\mbox{and}\quad\kappa_n(y+y^{-1})=\frac{y^{n+1}-y^{-n-1}}{y-y^{-1}}.$$
For example
$$\kappa_2(x)=x^2-1,\;\kappa_3(x)=x^3-2x,\; \kappa_4(x)=x^4-3x^2+1,\; \kappa_5(x)=x^5-4x^3+3x.$$

We use the same notation $\kappa_n$ when we consider the polynomials in $R[x]$ for another commutative ring $R$.
\begin{lemma}\label{Lem4}
\begin{enumerate}
\item Let $p$ be a prime. In $\mathbb{F}_p[x]$, we have
$$\kappa_{p^l-1}(x)\;=\;\begin{cases} (x^2-4)^{\frac{p^l-1}{2}}& \mbox{ if } p>2\\
x^{2^l-1}&\mbox{ if }p=2.
\end{cases}$$
\item Let $N\in\mZ_{>0}$. In $\mR[x]$, we have
$$\kappa_{N-1}(x)\;=\; \prod_{1\le j<N }\left(x-2\cos\left(\frac{j\pi}{N}\right)\right).$$
\end{enumerate}

\end{lemma}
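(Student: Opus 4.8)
The plan is to prove both parts by substituting $x = y + y^{-1}$ and working in the Laurent polynomial ring, using the closed form $\kappa_n(y+y^{-1}) = (y^{n+1}-y^{-n-1})/(y-y^{-1}) = y^n + y^{n-2} + \cdots + y^{-n}$ recorded in Section~\ref{SecCheb}, which is a genuine identity of Laurent polynomials with coefficients in $\{0,1\}$ and hence holds over any commutative ring. The key general fact I would use is that $x \mapsto y + y^{-1}$ defines an \emph{injective} ring homomorphism $R[x] \hookrightarrow R[y,y^{-1}]$ (equivalently, $y+y^{-1}$ is transcendental over $R$, which one sees e.g.\ from $R[y,y^{-1}]$ being integral of rank $2$ over $R[y+y^{-1}]$), so that to establish a polynomial identity $\kappa_{n}(x) = f(x)$ in $R[x]$ it suffices to check $\kappa_n(y+y^{-1}) = f(y+y^{-1})$ in $R[y,y^{-1}]$.

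For part (1), write $q = p^l$ and work over $R = \mF_p$. The closed form gives $\kappa_{q-1}(y+y^{-1})\cdot(y-y^{-1}) = y^q - y^{-q}$, while iterating the Frobenius (``freshman's dream'') identity $l$ times gives $y^q - y^{-q} = (y - y^{-1})^q$ in $\mF_p[y,y^{-1}]$. Cancelling $y-y^{-1}$ in this domain yields $\kappa_{q-1}(y+y^{-1}) = (y-y^{-1})^{q-1}$. When $p$ is odd, $q-1$ is even and $(y-y^{-1})^{q-1} = \big((y-y^{-1})^2\big)^{(q-1)/2} = \big((y+y^{-1})^2-4\big)^{(q-1)/2}$, so by the injectivity above $\kappa_{q-1}(x) = (x^2-4)^{(q-1)/2}$. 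When $p=2$ we have $y - y^{-1} = y + y^{-1}$, hence $(y-y^{-1})^{q-1} = (y+y^{-1})^{q-1}$ and therefore $\kappa_{2^l-1}(x) = x^{2^l-1}$.

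For part (2), I would set $y = e^{i\theta}$ with $\theta \in (0,\pi)$, so that $x = 2\cos\theta$ and $\kappa_{N-1}(2\cos\theta) = \sin(N\theta)/\sin\theta$. The right-hand side vanishes precisely when $\theta = j\pi/N$ for $j = 1,\dots,N-1$, and since $\cos$ is strictly decreasing on $[0,\pi]$ the values $2\cos(j\pi/N)$ for $1 \le j < N$ are $N-1$ \emph{distinct} real roots of $\kappa_{N-1}$. As $\kappa_{N-1}$ is monic of degree $N-1$ (immediate from the recursion in Section~\ref{SecCheb}), it must equal $\prod_{1\le j<N}(x-2\cos(j\pi/N))$. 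A purely algebraic variant avoiding trigonometry is to observe over $\mC$ that $(y^N-y^{-N})/(y-y^{-1})$ has as its $y$-roots exactly the $2N$-th roots of unity other than $\pm 1$, which the involution $y\mapsto y^{-1}$ groups into $N-1$ orbits of size $2$, each contributing the distinct root $y+y^{-1}=2\cos(j\pi/N)$ of $\kappa_{N-1}$.

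Both computations are essentially routine. The only points that need a little care are the bookkeeping over which ring each identity actually lives in --- in particular justifying the descent along $R[x]\hookrightarrow R[y,y^{-1}]$, which is where the transcendence of $y+y^{-1}$ enters --- and, in part (2), checking that the $N-1$ listed roots really are pairwise distinct so that the monic/degree count is conclusive. I do not anticipate a genuine obstacle.
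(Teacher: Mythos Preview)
Your proposal is correct and follows essentially the same approach as the paper: for part~(1) the paper also substitutes $x=y+y^{-1}$, applies Frobenius to get $(y-y^{-1})^{p^l-1}$, and rewrites this as $((y+y^{-1})^2-4)^{(p^l-1)/2}$, though it only treats $p>2$ explicitly and glosses over the descent along $R[x]\hookrightarrow R[y,y^{-1}]$ that you spell out. For part~(2) the paper simply declares the result well-known, so your root-counting argument is just supplying the standard justification.
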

\begin{proof}
For part (1), we prove the case $p>2$. Over $\mathbb{F}_p$, we have
$$\kappa_{p^l-1}(y+y^{-1})\;=\; \frac{y^{p^l}-y^{-p^l}}{y-y^{-1}}\;=\; (y-y^{-1})^{p^l-1}\;=\; ((y+y^{-1})^2-4)^{\frac{p^l-1}{2}},$$
from which the claim follows. Part (2) is well-known.
\end{proof}

\subsubsection{} We also consider the two-variable version $\mu_n(x,y)\in\mZ[x,y]$, symbolically defined by
$$\mu_{2i}(x,y)=\kappa_{2i}(\sqrt{xy}),\quad\mu_{2i+1}(x,y)=\sqrt{\frac{x}{y}}\;\kappa_{2i+1}(\sqrt{xy}).$$

\subsubsection{Cyclotomic part} It is clear that, if there exist $\nu_i(x,y)\in\mZ[x,y]$, $i\in\mN$, with the property that for all $n\in\mZ_{>0}$
$$\mu_{n-1}(x,y)\;=\;\prod_{i|n} \nu_{i-1}(x,y),$$
then these relations determine them uniquely. Indeed, the polynomials $\nu_i(x,y)$ exist, see \cite[Lemma~3.2]{Ha}. For example, $\nu_{N-1}(x,x)$ is given by the product in Lemma~\ref{Lem4}(2), where~$j$ is now restricted by the condition $\gcd(j,N)=1$.

%\begin{example}\label{Exnu}
%\begin{enumerate}
%\item For a prime $p$, we have $\nu_{p-1}(x,y)=\mu_{p-1}(x,y)$.
%\item We have $\nu_3(x,y)=xy-2$, $\nu_5(x,y)=xy-3$ and $\nu_7(x,y)=x^2y^2-4xy+2$.
%\end{enumerate}
%\end{example}

\subsection{Lopsided Temperley-Lieb algebras and Jones-Wenzl idempotents}

In this section we fix a triple
$$\bR\;:=\; (R,\delta_1,\delta_2)$$
of a commutative ring $R$ and two elements $\delta_1,\delta_2\in R$. We also set $\bR'=(R,\delta_2,\delta_1)$.

\subsubsection{Quantum numbers}
We set $[0]_{\bR}=0\in R$ and
$$[n]_{\bR}\;=\; \mu_{n-1}(\delta_1,\delta_2)\in R,\quad\mbox{for $n\in\mZ_{>0}$}.$$
Similarly, we set $[[0]]_{\bR}=0\in R$ and
$$[[n]]_{\bR}\;=\; \nu_{n-1}(\delta_1,\delta_2)\in R,\quad\mbox{for $n\in\mZ_{>0}$}.$$

More generally, we set (symbolically)
$$\binom{n}{i}_{\bR}\;=\; \frac{[n]_{\bR}[n-1]_{\bR}\cdots [n-i+1]_{\bR}}{[i]_{\bR}[i-1]_{\bR}\cdots[1]_{\bR}},\quad\mbox{for }\; 1\le i\le n,$$
which yields well-defined elements in $R$. Indeed, we can always rewrite $\binom{n}{i}_{\bR}$ as a product of the elements~$[[j]]_{\bR}$.
\begin{example}\label{ExQN}
\begin{enumerate}
\item We have $[1]_{\bR}=1$, $[2]_{\bR}=\delta_1$, $[3]_{\bR}=\delta_1\delta_2-1$ and $[4]_{\bR}=\delta_1(\delta_1\delta_2-2)$.
\item In case there is $q\in R^\times$ with
$$\delta_1\;=\;\delta_2\;=\; q+q^{-1},$$
we find the familiar
$$[n]_{\bR}\;=\; \kappa_{n-1}(q+q^{-1})\;=\; \frac{q^n-q^{-n}}{q-q^{-1}}.$$
In case $q=\pm 1$, this is to be interpreted as $[n]_{\bR}=(\pm 1)^{n+1}n$. 
\item We have
$$\binom{4}{2}_{\bR}\;=\; [[4]]_{\bR}[3]_{\bR}=(\delta_1\delta_2-2)(\delta_1\delta_2-1).$$
%\item By Example~\ref{Exnu}(2), we have $[[6]]_{\bR}=\delta_1\delta_2-3$.
\end{enumerate}
\end{example}

\subsubsection{Temperley-Lieb algebra}
For every $n\in\mN$, we have the associated lopsided (or two-coloured) Temperley-Lieb algebra $\TL_n(\bR)$, see \cite{El, EW, Ha}. This is an $R$-algebra, free as an $R$-module of rank the $n$-th Catalan number $C_n$, typically considered with a basis of non-crossing diagrams.
 We take the convention that in a product of diagrams, the left one goes on top.

It will be convenient to decorate the odd, when counting from the right, upper points of lines as $\uparrow$ and the even lower points as $\downarrow$. In particular, the diagram basis of $\TL_3(\bR)$ is 
$$
\begin{tikzpicture}[scale=0.5,thick,>=angle 90]
\begin{scope}[xshift=4cm]

\draw[-stealth]  (0,-1) -- +(0,2);

\draw[stealth-]  (1,-1) -- +(0,2);

\draw[-stealth]  (2,-1) -- +(0,2);

\draw (3,0) node[]{,};

\draw  (5,-1) to [out=90, in=180] +(0.5,0.5);
\draw[stealth-] (6,-1) to [out=90, in=0] +(-0.5,0.5);
\draw[stealth-]  (5,1) to [out=-90, in=180] +(0.5,-0.5);
\draw (6,1) to [out=-90, in=0] +(-0.5,-0.5);
\draw[-stealth]  (7,-1) -- +(0,2);
\draw (8,0) node[]{,};

\draw[stealth-]  (11,-1) to [out=90, in=180] +(0.5,0.5);
\draw (12,-1) to [out=90, in=0] +(-0.5,0.5);
\draw[stealth-]  (10,1) to [out=-90, in=180] +(0.5,-0.5);
\draw (11,1) to [out=-90, in=0] +(-0.5,-0.5);

\draw[-stealth] (10,-1) -- +(2,2);

\draw (13,0) node[]{,};

\draw[stealth-]  (16,-1) to [out=90, in=180] +(0.5,0.5);
\draw (17,-1) to [out=90, in=0] +(-0.5,0.5);
\draw  (16,1) to [out=-90, in=180] +(0.5,-0.5);
\draw[stealth-] (17,1) to [out=-90, in=0] +(-0.5,-0.5);

%\draw[-stealth] (15,-1) -- +(2,2);

\draw[-stealth]  (15,-1) -- +(0,2);
\draw (18,0) node[]{,};

\draw[-stealth] (22,-1) -- +(-2,2);
\draw  (21,1) to [out=-90, in=180] +(0.5,-0.5);
\draw[stealth-] (22,1) to [out=-90, in=0] +(-0.5,-0.5);
\draw (23,0) node[]{.};
\draw  (20,-1) to [out=90, in=180] +(0.5,0.5);
\draw[stealth-] (21,-1) to [out=90, in=0] +(-0.5,0.5);

\end{scope}
\end{tikzpicture}
$$

Composition of diagrams is defined by concatenation with counter-clockwise loops evaluated at $\delta_1$ and clockwise loops at $\delta_2$. The similar diagrammatic approach from \cite{El} uses two colours (as opposed to arrows) for the areas separated by the lines and lets the colour of the interior of a loop determine its evaluation. 

When $\delta_1=\delta_2$, the decoration becomes immaterial, and we recover an ordinary Temperley-Lieb algebra. 
We also have an algebra isomorphism
$$\TL_n(R,\delta_1,\delta_2)\;\xrightarrow{\sim}\; \TL_n(R,\lambda\delta_1,\lambda^{-1}\delta_2),$$
for each $\lambda\in R^\times$, which sends each diagram to a scalar multiple, with scalar given by $\lambda^{a-b}$, where $a$ is the number of clockwise caps and $b$ the number of counter-clockwise caps.

%In particular, if $\fk$ contains square roots, we only `need' the 2-parameter version of $\TL_n$ for $\delta_1\delta_2=0$.

\begin{definition} \label{DefJW}
The {\bf Jones-Wenzl idempotent} $\JW_n\in \TL_n(\bR)$, with $n\in\mZ_{>0}$, is (when it exists) the unique idempotent which yields zero when multiplying on the left (or equivalently on the right) with each diagram with a cup, and for which the coefficient of the identity diagram basis element is $1$. 
\end{definition}
By convention, we let $\JW_1$ be the identity. We will usually abbreviate the condition that the Jones-Wenzl idempotent exists in $\TL_n(\bR)$ to saying that ``$\bR$ is $\mathcal{J}\mathcal{W}_n$''. We will also say that~$\bR$ is $\cJW_n'$ when $\bR'$ is $\cJW_n$.
The following explicit characterisation of the $\cJW_n$ condition is, for $\delta_1=\delta_2$, due to Webster, and in general due to Hazi, see \cite[Theorem~A]{Ha}.

\begin{prop}(Hazi)\label{PropHazi}
The idempotent $\JW_n$ exists in $\TL_n(\bR)$ if and only if $\binom{n}{i}_{\bR}$ is invertible in $R$ for each $1\le i\le n$.
\end{prop}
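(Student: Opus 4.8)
The plan is to prove Hazi's criterion (Proposition~\ref{PropHazi}) by induction on $n$, using the standard Wenzl-type recursion for the Jones-Wenzl idempotents in the two-coloured Temperley-Lieb algebra. Recall the recursion: if $\JW_{n-1}$ exists in $\TL_{n-1}(\bR)$, then $\JW_n$ exists in $\TL_n(\bR)$ if and only if a certain ``return coefficient'' $r_n\in R$ is invertible, and in that case
$$\JW_n\;=\; (\JW_{n-1}\otimes 1)\;-\; r_n^{-1}\,(\JW_{n-1}\otimes 1)\,e_{n-1}\,(\JW_{n-1}\otimes 1),$$
where $e_{n-1}$ is the Temperley-Lieb generator joining the last two strands. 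The coefficient $r_n$ is, up to the bookkeeping of the two colours $\delta_1,\delta_2$, the value obtained by closing up the partial trace; one computes via the well-known identity that it equals $[n]_{\bR}/[n-1]_{\bR}$ in the localisation (more precisely $r_n = [[n']]_{\bR}$-type expressions once one tracks which colour appears). First I would set up this recursion carefully in the lopsided setting, checking the colour conventions so that the relevant loop is evaluated at the correct $\delta_i$; the colour alternation built into the $\uparrow/\downarrow$ decoration means the return coefficient alternates in a controlled way between the two ``quantum numbers'' associated to $\bR$ and $\bR'$.

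Next I would run the induction. For $n=1$ the statement is vacuous since $\JW_1 = \id$ always exists and the only binomial is $\binom{1}{1}_{\bR}=1$. Assume the result for $n-1$: $\JW_{n-1}$ exists iff $\binom{n-1}{i}_{\bR}$ is invertible for all $1\le i\le n-1$. For the inductive step, note first that if $\JW_{n-1}$ does not exist then neither does $\JW_n$ (since $\JW_{n-1}$ can be extracted from $\JW_n$ by capping off, or by the uniqueness characterisation in Definition~\ref{DefJW}); so we may assume $\JW_{n-1}$ exists, i.e.\ all $\binom{n-1}{i}_{\bR}$ for $i<n$ are invertible. Then $\JW_n$ exists iff $r_n$ is invertible. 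The key algebraic identity is that
$$\binom{n}{i}_{\bR}\;=\;\binom{n-1}{i-1}_{\bR}\cdot\frac{[n]_{\bR}}{[i]_{\bR}}\qquad\text{(symbolically),}$$
so that, given invertibility of all $\binom{n-1}{j}_{\bR}$, the invertibility of the whole family $\{\binom{n}{i}_{\bR}\}_{i\le n}$ is equivalent to the invertibility of $[n]_{\bR}$ alone—and hence (after matching $r_n$ with the appropriate quantum integer, up to a unit coming from the $\lambda$-rescaling isomorphism $\TL_n(R,\delta_1,\delta_2)\cong\TL_n(R,\lambda\delta_1,\lambda^{-1}\delta_2)$) to the invertibility of $r_n$. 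Combining with the inductive hypothesis closes the induction. Here one uses the fact, recorded just before Proposition~\ref{PropHazi}, that $\binom{n}{i}_{\bR}$ is always a genuine element of $R$ expressible as a product of the $[[j]]_{\bR}$, so ``invertible in $R$'' is unambiguous and the symbolic fraction manipulations can be justified in a suitable localisation and then transported back.

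The main obstacle I expect is twofold. First, making the Wenzl recursion and the computation of the return coefficient $r_n$ completely rigorous in the \emph{lopsided} algebra: one must verify that $(\JW_{n-1}\otimes 1)e_{n-1}(\JW_{n-1}\otimes 1) = r_n\,(\JW_{n-1}\otimes 1)$ with $r_n$ precisely the quantum integer (with the right colour), which requires an induction-within-the-induction or an appeal to the diagram calculus showing that capping $\JW_{n-1}$ produces the scalar $[n]_{\bR}/[n-1]_{\bR}$; the two-colour bookkeeping (which loop is counter-clockwise) is where errors creep in. Second, one should handle the ``only if'' direction with care when some intermediate $[n-1]_{\bR}$ or $[i]_{\bR}$ is a zero divisor rather than simply zero—this is why the statement is phrased in terms of the binomials $\binom{n}{i}_{\bR}$ (which are always defined) rather than the individual quantum integers, and why the reduction ``invertibility of the new family $\Leftrightarrow$ invertibility of $r_n$, given the old family'' must be argued at the level of the binomials directly, using the factorisation into the $[[j]]_{\bR}$ and the identity $[[n]]_{\bR}\mid [n]_{\bR}$ together with $r_n$ being (a unit times) $[[n]]_{\bR}$ after absorbing the already-invertible lower factors. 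Since this is essentially \cite[Theorem~A]{Ha}, I would either cite Hazi directly or, for self-containedness, reproduce the short recursion argument above.
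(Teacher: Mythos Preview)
The paper does not supply its own proof of this proposition: it is stated with attribution and cited as \cite[Theorem~A]{Ha}. Your proposal to prove it via the Wenzl recursion is natural, but it contains a genuine error.

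The flaw is the claim that ``if $\JW_{n-1}$ does not exist then neither does $\JW_n$.'' This is false. Take $R=\mQ$ with $\delta_1=\delta_2=1$, so that $[2]_{\bR}=1$, $[3]_{\bR}=0$, $[4]_{\bR}=-1$, $[5]_{\bR}=-1$, $[6]_{\bR}=0$. By Example~\ref{ExQN}(3) we have $\binom{4}{2}_{\bR}=(\delta_1\delta_2-2)(\delta_1\delta_2-1)=0$, so $\JW_4$ does not exist. On the other hand $\binom{5}{1}_{\bR}=-1$, $\binom{5}{2}_{\bR}=[5]_{\bR}[4]_{\bR}/[2]_{\bR}=1$, and (by symmetry, since $\delta_1=\delta_2$) all $\binom{5}{i}_{\bR}$ are units, so $\JW_5$ \emph{does} exist. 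The set of $n$ for which $\JW_n$ exists is simply not downward closed, and hence an induction that begins by assuming $\JW_{n-1}$ exists cannot reach every case. Your capping-off justification fails for exactly this reason: the partial trace of $\JW_n$ is (generically) $\tfrac{[n+1]_{\bR}}{[n]_{\bR}}\JW_{n-1}$, and this coefficient can vanish---here $[6]_{\bR}=0$---so nothing is recovered.

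Hazi's actual argument, alluded to later in the paper through the reference to \cite[Theorem~4.4]{Ha} in the proof of Theorem~\ref{ThmRot}, proceeds instead via the \emph{generic} Jones--Wenzl element over a localisation of $\mZ[\delta_1,\delta_2]$: its coefficients are explicit rational functions whose denominators are governed by the $\binom{n}{i}_{\bR}$, and existence over a given $R$ amounts precisely to these denominators specialising to units. If you want a self-contained proof, that is the route to take rather than the naive recursion.
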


%\begin{example}If $[m]_{\bR}$ is invertible for all $m\le n$, then $\JW_n$ exists in $\TL_n(\bR)$.
%\end{example}

%We will sometimes simply say that $\bR$ `is Jones-Wenzl' to mean that the Jones-Wenzl idempotent exists in $\TL_n(\bR).$

We are mainly interested in the existence of the Jones-Wenzl idempotent $\JW_n$ under the additional condition $[n+1]_{\bR}=0$.
\begin{example}\label{ExEx} Assume that $R$ is a field $\bk$ and that $\delta_1=\delta_2=\delta$.

\begin{enumerate}
\item If $n=p^l-1$, for $p=\mathrm{char}(\fk)>0$, then $[n+1]_{\bR}=0$ is equivalent to $\delta=\pm 2$, by Lemma~\ref{Lem4}(1). In this case, $\JW_n$ always exists, as follows from Lukas' theorem and Example~\ref{ExQN}(2). 
\item If $\mathrm{char}(\fk)=0$ and $[n+1]_{\bR}=0$ then $\JW_n$ exists in $\TL_n(\bR)$. Indeed, from Lemma~\ref{Lem4}(2) we can derive that there is at most one $i$ for which $[[i]]_{\bR}=0$, from which we can derive the claim.
\item If $\mathrm{char}(\fk)=2$ and $\delta=0$, then $\JW_5$ does not exist in $\TL_5(\bR)$, even though $[6]_{\bR}=0$. Indeed,
$\binom{5}{2}_{\bR}=0.$
\end{enumerate}
\end{example}

The `rotatability' of the Jones-Wenzl idempotents is considered in \cite{El, Ha}. We say that~$\bR$ is $\cJWR_n$ if $\TL_n(\bR)$ and $\TL_n(\bR')$ both admit the Jones-Wenzl idempotent and they are rotatable, {{\it i.e.} one `rotates' into the other. We summarise some results:

\begin{prop}(Hazi)\label{PropRot0}
The following conditions are equivalent on a triple $\bR=(R,\delta_1,\delta_2)$ and $n\in \mZ_{>1}$.
\begin{enumerate}
\item $\bR$ is $\cJWR_n$.
\item For all $1\le i\le n$, we have $$\binom{n+1}{i}_{\bR}\;=\;0\;=\; \binom{n+1}{i}_{\bR'}.$$
\item $\bR$ is $\cJW_n$ and for every $1\le k\le n$
$$\prod_{l}[[l]]_{\bR}\;=\;0$$
where $l$ runs over all divisors of $n+1$ that do not divide $k$.
\end{enumerate}
\end{prop}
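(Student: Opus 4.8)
The plan is to deduce the proposition from Hazi's work \cite{Ha} together with the elementary arithmetic of the polynomials $\kappa_n,\mu_n,\nu_n$ from \S\ref{SecCheb}; the only genuine input from outside this excerpt is the rotatability half of \cite{Ha}. \emph{For (1) $\Leftrightarrow$ (2):} by definition (1) says that $\JW_n$ exists in $\TL_n(\bR)$ and in $\TL_n(\bR')$ and that one rotates into the other. Existence is governed by Proposition~\ref{PropHazi}, and the rotatability of an existing $\JW_n$ is, by Hazi's theorem, equivalent to the vanishing of the obstruction coefficients produced by the two-colour Jones--Wenzl (clasp) recursion; unwinding that recursion identifies these coefficients with the binomials $\binom{n+1}{i}_{\bR}$, and since a single rotation interchanges the two colourings the analogous statement is needed for $\bR'$ as well. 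Two bookkeeping points must be addressed: \emph{(a)} our arrow convention matches Elias--Hazi's two-colour convention up to the rescaling isomorphism $\TL_n(R,\delta_1,\delta_2)\xrightarrow{\sim}\TL_n(R,\lambda\delta_1,\lambda^{-1}\delta_2)$, under which each $\binom{n+1}{i}_{\bR}$ only changes by a unit, so that the reformulation in terms of these binomials is convention-independent; and \emph{(b)} condition (2) already forces $\JW_n$ to exist, i.e.\ $\bR$ and $\bR'$ to be $\cJW_n$ (taking $i=1$ gives $[n+1]_{\bR}=0=[n+1]_{\bR'}$, and the remaining vanishings force the quantum binomials $\binom{n}{i}_{\bR}$ to be units --- this is part of Hazi's analysis and is also visible from the next step).

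\emph{For (2) $\Leftrightarrow$ (3), the $\bR$-side:} from the defining factorisation $[m]_{\bR}=\prod_{d\mid m}[[d]]_{\bR}$ one obtains, for $1\le i\le n$,
\[
\binom{n+1}{i}_{\bR}\;=\;\prod_{d\ge 1}[[d]]_{\bR}^{\,e_d(i)},\qquad e_d(i)=\Big\lfloor\tfrac{n+1}{d}\Big\rfloor-\Big\lfloor\tfrac{n+1-i}{d}\Big\rfloor-\Big\lfloor\tfrac{i}{d}\Big\rfloor\in\{0,1\}.
\]
A short ``carry'' analysis of $e_d(i)$ gives: $e_d(i)=0$ for $d>n+1$; $e_d(i)=1$ for $d=n+1$; $e_d(i)=1$ iff $d\nmid i$ when $d\mid n+1$ and $d\le n$; and for $d\le n$ with $d\nmid n+1$ the same analysis (with $n$ in place of $n+1$) shows $[[d]]_{\bR}$ divides some $\binom{n}{j}_{\bR}$, hence is a unit once $\bR$ is $\cJW_n$. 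Therefore, assuming $\bR$ is $\cJW_n$,
\[
\binom{n+1}{i}_{\bR}\;=\;(\text{a unit})\cdot\!\!\prod_{\substack{l\mid n+1\\ l\nmid i}}\![[l]]_{\bR},
\]
so the vanishing of all $\binom{n+1}{i}_{\bR}$ is equivalent to the vanishing clause in (3). Combined with point (b) this identifies (3) with ``$\bR$ is $\cJW_n$ and $\binom{n+1}{i}_{\bR}=0$ for $1\le i\le n$''.

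\emph{For the symmetry of (3):} it remains to see that (3), though stated only for $\bR$, also yields that $\bR'$ is $\cJW_n$ with $\binom{n+1}{i}_{\bR'}=0$. The key point is that $\nu_{m-1}(x,y)$ is a polynomial in $xy$ for every $m\ne 2$ --- a short induction on $m$ via $\mu_{m-1}=\prod_{d\mid m}\nu_{d-1}$, using that $\mu_{m-1}(x,y)$ is a polynomial in $xy$ for $m$ odd and equals $x$ times such a polynomial for $m$ even --- whereas $[[2]]_{\bR}=\delta_1$ and $[[2]]_{\bR'}=\delta_2$; hence $[[d]]_{\bR}=[[d]]_{\bR'}$ for all $d\ne 2$. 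If $n$ is even then $n+1$ is odd, so every divisor of $n+1$ is $\ne 2$ and the vanishing clause is literally symmetric in $\bR,\bR'$; moreover $\bR$ being $\cJW_n$ forces $\delta_1$ to be a unit (it divides $\binom{n}{1}_{\bR}=[n]_{\bR}$), while the $k=1$ instance of (3) reads $[n+1]_{\bR}=\mu_n(\delta_1,\delta_2)=0$, a polynomial in $\delta_1\delta_2$ with constant term $\pm1$, so that $\delta_1\delta_2$ and hence $\delta_2$ are units and $\bR'$ is $\cJW_n$ too. If $n$ is odd then $[[2]]$ occurs in no $\binom{n}{i}_{\bR}$, so ``$\bR$ is $\cJW_n$'' and ``$\bR'$ is $\cJW_n$'' are automatically equivalent, and the only possible discrepancy between the vanishing clauses for $\bR$ and $\bR'$ is in products $\prod_{l\mid n+1,\,l\nmid k}[[l]]$ with $k$ odd, where $[[2]]$ appears as $\delta_1$ versus $\delta_2$; combining such an instance with the appropriate even value of $k$ and using $\delta_1\delta_2=[[2]]_{\bR}[[2]]_{\bR'}$ one deduces the vanishing of the relevant product $\prod_{l\ge 3}[[l]]_{\bR}$, after which both $\delta_1\cdot(-)=0$ and $\delta_2\cdot(-)=0$ hold trivially.

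\emph{Main obstacle.} Steps $2$ and $3$ are elementary, though the $(\delta_1,\delta_2)$-symmetry bookkeeping in Step $3$ is fiddly. The substantial part is (1) $\Leftrightarrow$ (2): showing that the obstruction coefficients of the two-colour Jones--Wenzl recursion are exactly the $\binom{n+1}{i}_{\bR}$ is the heart of \cite{Ha}, and a self-contained proof would have to reproduce that argument.
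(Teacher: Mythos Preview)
The paper's own proof is two sentences of citation: (1)$\Leftrightarrow$(2) is \cite[Theorem~B]{Ha}, and (1)$\Leftrightarrow$(3) is a reformulation of \cite[Proposition~4.7]{Ha} using the proof of \cite[Lemma~4.6]{Ha}. Since the proposition is attributed to Hazi, no further argument is offered. Your treatment of (1)$\Leftrightarrow$(2) agrees with this; where you diverge is in attempting a direct elementary proof of (2)$\Leftrightarrow$(3) via the factorisation $\binom{n+1}{i}_{\bR}=\prod_d[[d]]_{\bR}^{e_d(i)}$ and a carry analysis. That computation is correct and does identify condition~(3) with ``$\bR$ is $\cJW_n$ and $\binom{n+1}{i}_{\bR}=0$ for all $i$'', which is a pleasant way to see where the divisor condition in~(3) comes from.

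There is, however, a genuine gap in your Step~3. For $n$ odd and $k$ odd you must show $\delta_2 Q_k=0$, where $Q_k=\prod_{l\mid n+1,\,l\nmid k,\,l\ge 3}[[l]]_{\bR}$, and you claim $Q_k=0$ follows by ``combining with the appropriate even value of $k$''. This is not the mechanism that works: already for $n=5$, $k=3$ one has $Q_3=[[6]]_{\bR}$, while \emph{every} even $k'\le 5$ yields the product $[[3]][[6]]$, which contains an extra factor and so does not directly force $Q_3=0$. (The conclusion $[[6]]_{\bR}=0$ is nonetheless true here: multiply $\delta_1[[6]]=0$ by $\delta_2$ to get $\delta_1\delta_2\cdot[[6]]=0$ and subtract $[[3]][[6]]=(\delta_1\delta_2-1)[[6]]=0$; but this is an ad hoc trick, not the argument you sketch, and you give no indication of how to make it uniform in $n$.) A smaller issue is your point~(b): the claim that condition~(2) alone forces $\cJW_n$ is not ``visible from the next step'', since the factorisation tells you which $[[d]]$ are zero divisors, not which are units; you end up relying on Hazi for this anyway, so in effect you are using both of Hazi's results, as the paper does.
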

\begin{proof}
The equivalence of (1) and (2) is \cite[Theorem~B]{Ha}. The equivalence of (1) and (3) is a reformulation of \cite[Proposition~4.7]{Ha}, using the proof of \cite[Lemma~4.6]{Ha}.
\end{proof}

\begin{corollary}\label{CorRot}If~$R$ is an integral domain, then $\bR$ is $\cJWR_n$ if and only if
$$[[n+1]]_{\bR}\;=\;0\;=\; [[n+1]]_{\bR'}.$$
\end{corollary}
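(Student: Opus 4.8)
The plan is to deduce this from Proposition \ref{PropRot0}, specifically from the equivalence of conditions (1) and (3) there. The forward direction is essentially immediate: if $\bR$ is $\cJWR_n$, then taking $k=1$ in condition (3) of Proposition \ref{PropRot0} forces $\prod_l [[l]]_{\bR}=0$, where $l$ ranges over all divisors of $n+1$ that do not divide $1$, i.e. over all divisors $l>1$ of $n+1$. Since $R$ is an integral domain, the vanishing of this product means $[[l]]_{\bR}=0$ for some divisor $l>1$ of $n+1$. The first thing I would do is argue that this forces $l=n+1$: indeed, if a \emph{proper} divisor $l$ of $n+1$ satisfied $[[l]]_{\bR}=0$, then picking $k=l$ in condition (3) gives a product over divisors of $n+1$ not dividing $l$; I would need to check that this product is then a nonzero element of the domain $R$ (no factor vanishes), contradicting condition (3). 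This is the one place requiring a small argument — see the obstacle paragraph below. Granting it, $[[n+1]]_{\bR}=0$, and by the symmetric role of $\bR'$ (condition (1) is manifestly symmetric under $\bR\leftrightarrow\bR'$, and so is being $\cJWR_n$ by definition), also $[[n+1]]_{\bR'}=0$.

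For the converse, suppose $[[n+1]]_{\bR}=0=[[n+1]]_{\bR'}$. I would verify condition (3) of Proposition \ref{PropRot0}. First, $\bR$ is $\cJW_n$: by Proposition \ref{PropHazi} this means $\binom{n}{i}_{\bR}$ is invertible for $1\le i\le n$, and each such binomial coefficient rewrites as a product of factors $[[j]]_{\bR}$ with $j\le n<n+1$, none of which is the vanishing factor $[[n+1]]_{\bR}$ — but I must still argue these factors are \emph{units}, not merely nonzero. Here I would invoke that the relevant $[[j]]_{\bR}$ appearing are exactly those needed for $\cJW_n$, and use the hypothesis together with the structure of Proposition \ref{PropRot0}(2)$\Leftrightarrow$(3): in fact it is cleaner to instead check condition (2), namely $\binom{n+1}{i}_{\bR}=0=\binom{n+1}{i}_{\bR'}$ for $1\le i\le n$. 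Since $\binom{n+1}{i}_{\bR}$ is a product of factors $[[l]]_{\bR}$ over the divisors $l$ of $n+1$ not dividing some $k$ in the appropriate range (by the cyclotomic factorization in \ref{SecCheb} and \cite[Lemma~3.2]{Ha}), and $l=n+1$ always occurs among these for $1\le i\le n$, each such product contains the factor $[[n+1]]_{\bR}=0$ and hence vanishes; symmetrically for $\bR'$. Then condition (2) holds, so by Proposition \ref{PropRot0} $\bR$ is $\cJWR_n$.

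The main obstacle is the bookkeeping in the forward direction: pinning down that the vanishing factor among the $[[l]]_{\bR}$ must be the top one $l=n+1$ rather than a proper divisor, and similarly in the converse making sure that when I claim $\binom{n+1}{i}_{\bR}$ contains $[[n+1]]_{\bR}$ as a factor for \emph{every} $1\le i\le n$, this is genuinely true — this amounts to recalling precisely which divisors $l\mid n+1$ appear in the factorization of $\binom{n+1}{i}_{\bR}$, namely those $l$ for which there exists a multiple of $l$ in $\{1,\dots,i\}$ that is $\le n+1-1$ but this needs care; the cleanest route is to use the identity, valid in $\mZ[x,y]$, expressing $\binom{n+1}{i}$ as $\prod_{l} \nu_{l-1}$ over an explicit set of divisors, and to note $n+1$ lies in that set exactly when $0<i<n+1$. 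Once the integrality of $R$ is used to pass from "a product vanishes" to "a factor vanishes," everything else is formal. I would also double-check the degenerate convention $n\in\mZ_{>1}$ versus small cases ($n=2$ reduces to $[[3]]_{\bR}=0=[[3]]_{\bR'}$) to make sure no edge case is lost.
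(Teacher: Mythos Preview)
Your converse direction is correct and matches the paper: once you pivot to checking condition~(2) of Proposition~\ref{PropRot0}, the point is simply that $[[n+1]]_{\bR}$ occurs exactly once in the numerator of $\binom{n+1}{i}_{\bR}$ (from the factor $[n+1]_{\bR}$) and never in the denominator, for every $1\le i\le n$.

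The forward direction, however, has a genuine gap which you underestimate as ``bookkeeping''. Your plan is: if some proper divisor $l\mid n+1$ had $[[l]]_{\bR}=0$, take $k=l$ in condition~(3) and derive a contradiction because ``no factor vanishes''. But nothing you have written rules out \emph{two} proper divisors $l,l'$ of $n+1$ with $[[l]]_{\bR}=[[l']]_{\bR}=0$. Concretely, if $n+1=12$ and $[[4]]_{\bR}=[[6]]_{\bR}=0$, then for every $1\le k\le 11$ at least one of $4,6$ fails to divide $k$, so every product in condition~(3) contains a vanishing factor and no contradiction arises from your argument. The integral domain hypothesis, by itself, only lets you pass from a vanishing product to a vanishing factor; it does not tell you there is a \emph{unique} such factor.

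What is actually needed is the arithmetic input the paper invokes: \cite[Lemma~3.4]{Ha}, which says that in an integral domain $[[m]]_{\bR}$ and $[[l]]_{\bR}$ cannot both vanish unless one of $m,l$ divides the other. (In the symmetric case $\delta_1=\delta_2=\delta$ this is the coprimality of the ``real cyclotomic'' polynomials, e.g.\ $[[4]]=\delta^2-2$ and $[[6]]=\delta^2-3$.) The paper then argues as follows: let $m=\max\{i\le n+1:[[i]]_{\bR}=0\}$, which exists since $[n+1]_{\bR}=0$; if $m<n+1$, condition~(3) with $k=m$ produces a divisor $l$ of $n+1$ with $l\nmid m$ and $[[l]]_{\bR}=0$; maximality gives $l\le m$, hence $m\nmid l$, and now \cite[Lemma~3.4]{Ha} yields the contradiction. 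Your outline can be repaired by inserting exactly this lemma, but without it the argument does not go through.
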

\begin{proof}
That the two vanishing conditions imply the conditions in Proposition~\ref{PropRot0}(2) follows from the expansion of the binomial coefficients in terms of the $[[j]]_{\bR}$.

Conversely, assume that $\bR$ is $\cJWR_n$. Set
$$m\;:=\;\max\{ i\in[1, n+1]\,|\, [[i]]_{\bR} =0\}.$$
Note that this is well-defined since, by Proposition~\ref{PropRot0}(2)
$$0\;=\;\binom{n+1}{1}_{\bR}\;=\;[n+1]_{\bR}\;=\; \prod_{l|n+1}[[l]]_{\bR}.$$
Assume for a contradiction that $m<n+1$. By Proposition~\ref{PropRot0}(3) there exists an $l\le n$ that does not divide $m$ with $[[l]]_{\bR}=0$. By maximality of $m$ we also know that $m$ does not divide $l$. But \cite[Lemma~3.4]{Ha} then implies that $[[m]]_{\bR}$ and $[[l]]_{\bR}$ cannot both be zero, a contradiction. The identical argument for $\bR'$ concludes the proof.
\end{proof}

%
%\begin{prop}\label{PropRot}
%The following conditions are equivalent on a triple $\bR=(R,\delta_1,\delta_2)$, with~$R$ an integral domain, and $n\in \mZ_{>1}$.
%\begin{enumerate}
%\item The Jones-Wenzl idempotents $\JW_n\in \TL_n(\bR)$ and $\JW_n'\in\TL_n(\bR)$ exist and are rotatable.
%\item We have $[[n+1]]_{\bR}=0= [[n+1]]_{\bR'}$.
%\end{enumerate}
%\end{prop}
%\begin{proof}
%In \cite[Theorem~B]{Ha} it is proved that (1) is equivalent with the condition
%$$\binom{n+1}{i}_{\bR}\;=\;0\;=\; \binom{n+1}{i}_{\bR'}$$
%for all $1\le i\le n$. 
%
%Now assume that (2) is satisfied. It then follows immediately that the product expansion of $\binom{n+1}{i}_{\bR}$ in factors $[[j]]_{\bR}$ contains the zero factor $[[n+1]]_{\bR}$, hence the displayed equations, and consequently condition (1), follow.
%
%Conversely, assume that (1), and hence also the displayed equations, are satisfied. 
%\end{proof}

%%%%%%%%%%%%%%%%%%%%%%%%%%%%%%%%%%%%%%%%%%%%%%%%%%%%%%%%%%%%%%%%%%%%%%%%%%%%%%%%%%%%%%%%%%
%%%%%%%%%%%%%%%%%%%%%%%%%%%%%%%%%%%%%%%%%%%%%%%%%%%%%%%%%%%%%%%%%%%%%%%%%%%%%%%%%%%%%%%%%%
%%%%%%%%%%%%%%%%%%%%%%%%%%%%%%%%%%%%%%%%%%%%%%%%%%%%%%%%%%%%%%%%%%%%%%%%%%%%%%%%%%%%%%%%%%

\section{Additive case}\label{SecAdd}

Let $\cA$ be an additive monoidal category (with bi-additive tensor product). The main aim of this section is unpacking the following definition, and its consequences, purely within the language of monoidal categories.

\begin{definition}
A rigid object $X\in\cA$ is {\bf homotopically $N$-bounded}, with $N\in\mZ_{>0}$, if the endofunctor $-\otimes X$ of the stable $\infty$-category $\mathrm{N}_{\mathrm{dg}}(\mathrm{Ch}(\cA))$, see \cite[Proposition~1.3.2.10]{Lu}, is $N$-spherical in the sense of \cite[Definition~4.1.1]{DKS}.
\end{definition}

We also say that $X$ is {\bf homotopically bounded}, if it is homotopically $N$-bounded for some $N\in\mZ_{>0}$ and {\bf homotopically unbounded} otherwise.

\subsection{Continuant complexes}
Fix a rigid $X\in\cA$.

\subsubsection{}\label{PrepDefComp} We consider 
$$\mE_0(X)\;:=\; \unit[0]\quad\mbox{and}\quad \mE_1(X):=X[0]$$
as objects in $\Ch(\cA)$ concentrated in homology degree $0$. We have the `chain map'
$$f_1:=\; X^\ast \otimes \mE_1(X) \xrightarrow{\ev_X} \mE_0(X). $$
Justified by \cite[equation~(3.3.2)]{DKS}, we continue as below.

\begin{definition}\label{DefComp}
We define $\mE_n(X)\in\Ch(\cA)$ and chain maps $f_n$ for $n\in\mZ_{>1}$ iteratively as follows.
\begin{enumerate}
\item The complex $\mE_{n}(X)$ is given by $\Cone (f_{n-1})[-1]$.
\item The chain map
$$f_{n}:X^{(n)}\otimes \mE_{n}(X)\;\to\; \mE_{n-1}(X)$$
is defined by adjunction from the canonical chain map
$$\phi_{n}:\mE_{n}(X)=\Cone(f_{n-1})[-1]\;\to\; X^{(n-1)}\otimes \mE_{n-1}(X).$$
\end{enumerate}
\end{definition}
We also let $\phi_1$ be the defining isomorphism $\mE_1(X)\xrightarrow{\sim}X[0]$.
By definition, we have exact triangles in $\cK(\cA)$ for $n>1$:
\begin{equation}\label{DefTria}
\mE_n(X)\;\xrightarrow{\phi_n}\; X^{(n-1)}\otimes \mE_{n-1}(X)\;\xrightarrow{f_{n-1}}\; \mE_{n-2}(X)\;\to\;.
\end{equation}

\begin{example}With obvious abbreviation of morphisms:
\begin{enumerate}
\item The complex $\mE_2(X)$ is 
$\; 0\to X^\ast\otimes X\xrightarrow{\ev_X} \unit\to 0$.
\item The complex $\mE_3(X)$ is 
$$ 0\to X^{\ast\ast}\otimes X^\ast\otimes X\xrightarrow{\left(\begin{array}{c}
\ev_{X}\\
\ev_{X^\ast}
\end{array}\right)} X^{\ast\ast}\oplus X\to 0.$$
\item The complex $\mE_4(X)$ is
$$ 0\to X^{\ast\ast\ast}\otimes X^{\ast\ast}\otimes X^\ast\otimes X\xrightarrow{\left(\begin{array}{c}
\ev_X\\
\ev_{X^\ast}\\
\ev_{X^{\ast\ast}}
\end{array}\right)}(X^{\ast\ast\ast}\otimes X^{\ast\ast})\oplus (X^{\ast\ast\ast}\otimes X) \oplus (X^\ast\otimes X) \xrightarrow{(\ev_{X^{\ast\ast}}, 0,-\ev_{X})} \unit\to 0 .$$
\end{enumerate}
The term $\cdots\otimes X^\ast\otimes X$ is always in homological degree $0$.
\end{example}

\begin{remark}\label{RemExp}
In general, it follows by construction, or from the detailed analysis in \cite[\S 3]{DKS}, that $\mE_n(X)$ is a complex of the form
\begin{equation}\label{eqAI}
0\to A\to \bigoplus_{|I|=2}A_I\to\bigoplus_{|J|=4}A_J\to\cdots,
\end{equation}
where the summations run over {\em twinned} subsets of $\{0,1,2,\cdots,n-1\}$ of the given cardinality. Twinned subsets are disjoint unions of neighbouring pairs $\{i,i+1\}$. Furthermore, 
$$A\;:=\; X^{(n-1)}\otimes X^{(n-2)}\otimes\cdots X^\ast\otimes X,$$
and for a twinned subset $I$, the object $A_I$ is obtained from $A$ by removing all factors $X^{(i)}$, $i\in I$, in the product. For twinned subsets $I,J\subset\{0,1,2,\cdots,n-1\}$, we write $I\lhd J$ to denote that $I\subset J$ such that $J\backslash I$ is of the form $\{i,i+1\}$. Whenever $I\lhd J$ we have a morphism
$$\ev^I_J\;:\; A_I\to A_J$$
given by whiskering $\ev_{X^{(i)}}$. The morphisms in the complex \eqref{eqAI} are matrices with entries given by these evaluations, up to sign, or zero for all remaining cases.
\end{remark}

\subsubsection{} We can provide a similar iterative definition of complexes, $\mE^0(X)=\unit[0]$, $\mE^1(X)=X[0]$ and
$$\mE^n(X)\;:=\;\Cone\left(\mE^{n-2}(X)\,\to\, \mE^{n-1}(X)\otimes X^{(n-1)}\right).$$
More directly, we can define these complexes as
\begin{equation}\label{eqNN} \mE_n({}^\ast X)^\ast\;\simeq\; \mE^n(X)\;\simeq\; {}^\ast\mE_n(X^\ast).\end{equation}
That the expressions on the left and right actually agree can be proved iteratively from Definition~\ref{DefComp}.

\begin{remark}\label{RemCont}
As observed in \cite{DKS}, in the (split) Grothendieck group of $\cA$ we have
$$[\mE_n(X)]=E_n([X^{(n-1)}],[X^{(n-2)}],\cdots, [X])\;\mbox{ and }\;[\mE^n(X)]=E_n([X],[X^\ast],\cdots,[X^{(n-1)}]),$$
with $E_n$ Euler's (signed) continuant polynomial in $\mZ\langle x_1,x_2,\cdots, x_n\rangle$. Our interest in the polynomials in Section~\ref{SecCheb} stems from
$$E_n(x,x,\cdots,x)=\kappa_n(x)\quad\mbox{and}\quad E_n(\cdots, x,y,x)=\mu_n(x,y),$$
for commuting $x,y$, see for instance \cite[Proposition~1.4.1]{DKS}.
\end{remark}

\subsection{Some exact triangles}

In this section we use the octahedral axiom to construct new exact triangles from the triangles \eqref{DefTria} defining $\mE_n(X)$ and $\mE^n(X)$. We start by constructing a family of complexes, as a variation of Definition~\ref{DefComp}, which will actually turn out to yield isomorphic complexes.

 \subsubsection{}\label{DefD}
We set $\mD_0(X)=\mE_0(X)=\unit[0]$ and $\mD_1(X)=\mE_1(X)=X[0]$. We view evaluation as the chain map
$$t_1: \mD_1(X^\ast) \otimes X\xrightarrow{\ev_X} \mD_0(X^{\ast\ast}).$$

We define $\mD_n(X)\in\Ch(\cA)$ and $t_n$ for $n\in\mZ_{>1}$ iteratively as follows.
\begin{enumerate}
\item The complex $\mD_{n}(X)$ is given by $\Cone (t_{n-1})[-1]$.
\item The chain map
$$t_{n}:\mD_{n}(X^\ast)\otimes X\;\to\; \mD_{n-1}(X^{\ast\ast})$$
is defined by adjunction from the canonical chain map
$$\theta_{n}:\mD_{n}(X^{\ast})=\Cone(t_{n-1})[-1]\;\to\; \mD_{n-1}(X^{\ast\ast})\otimes X^{\ast}.$$
\end{enumerate}
We also let $\theta_1$ denote the identity $\mD_1(X)=X[0]$.

\begin{remark}
\begin{enumerate}
\item One can verify directly that $\mD_n(X)=\mE_n(X)$ for $n\le 3$.
\item Contrary to the definition of $\mE_n(X)$, the definition of $\mD_{n}(X)$ involves $\mD_i(X^{(j)})$ for $j\not=0$. We continue to take the convention to use the symbols $t_n$ and $\theta_n$ without reference to the specific object ($X$ or some dual) in order to keep notation lighter.
\end{enumerate}
\end{remark}

\subsubsection{}\label{Oct} We will use the following consequence of the octahedral axiom. Given composable morphisms $a: U\to V$, $b:V\to W$ in a triangulated category, there exist an exact triangle and a commutative triangle
$$\xymatrix{ \Cone(c)[-1]\ar[rr]&& \Cone(b)[-1]\ar[rr]\ar[rd]&&\Cone(a)\ar[rr]&&\\
&&&V\ar[ru]
}$$
where the morphisms to and out of $V$ define the cones.

 \begin{prop}\label{ThmTrian}For every $2\le l\le n$,
there is an exact triangle in $\cK(\cA)$:
 $$\mE_n(X)\;\to\; \mE_{l-1}(X^{(n-l+1)})\otimes \mE_{n-l+1}(X)\;\to\; \mE_{l-2}(X^{(n-l+2)})\otimes \mE_{n-l}(X)\;\to\;.$$
 \end{prop}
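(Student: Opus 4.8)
The natural strategy is induction on $l$, with the base case $l=2$ being exactly the defining triangle \eqref{DefTria} for $\mE_n(X)$ after identifying $\mE_1(X^{(n-1)})\otimes\mE_{n-1}(X)$ with $X^{(n-1)}\otimes\mE_{n-1}(X)$ and $\mE_0(X^{(n)})\otimes\mE_{n-2}(X)$ with $\mE_{n-2}(X)$. For the inductive step, suppose the triangle holds for some $l$ with $2\le l<n$; I want to produce the triangle for $l+1$. The idea is to apply the octahedral consequence recorded in \ref{Oct} to a suitable pair of composable morphisms, where the first factor in each tensor product gets ``promoted'' one step in the continuant recursion: the triangle \eqref{DefTria} applied to $X^{(n-l+1)}$ (shifted/tensored appropriately) reads
$$\mE_l(X^{(n-l)})\;\xrightarrow{\phi_l}\; X^{(n-l)}\otimes\mE_{l-1}(X^{(n-l+1)})\;\xrightarrow{f_{l-1}}\;\mE_{l-2}(X^{(n-l+2)})\;\to\;,$$
and I would tensor this on the right with $\mE_{n-l+1}(X)$ or $\mE_{n-l}(X)$ as appropriate, then splice it against the inductive triangle.

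\textbf{Key steps.} First I would set up the two composable morphisms $a,b$ whose cones realize, respectively, the term $\mE_{l-1}(X^{(n-l+1)})\otimes\mE_{n-l+1}(X)$ and the term $\mE_{l-2}(X^{(n-l+2)})\otimes\mE_{n-l}(X)$ appearing in the $l$-th triangle, with common middle object $V$ a tensor product involving $X^{(n-l+1)}$; concretely $a$ should be built from $f_{n-l}$ (the map lowering the second index) tensored with an identity, and $b$ from $f_{l-1}$ applied in the first factor, so that $\Cone(a)=\mE_n(X)$ by the inductive hypothesis and $\Cone(b)[-1]$ feeds the $(l+1)$-st triangle. Then \ref{Oct} yields an exact triangle
$$\Cone(c)[-1]\to\Cone(b)[-1]\to\Cone(a)\to$$
together with the commuting triangle through $V$; the content is to recognize $\Cone(b)[-1]$ as $\mE_l(X^{(n-l)})\otimes\mE_{n-l}(X)$ (using that $\mE_l(\,\cdot\,)=\Cone(f_{l-1})[-1]$ and that $-\otimes\mE_{n-l}(X)$ is exact on $\cK(\cA)$, so it commutes with cones) and $\Cone(c)[-1]$ as $\mE_{l-1}(X^{(n-l+1)})\otimes\mE_{n-l+1}(X)$ via \eqref{DefTria} for the index $n-l+1$ in the second slot. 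Rotating the resulting triangle gives exactly the claimed triangle with parameter $l+1$ (relabelling $l\mapsto l+1$), provided the outer terms match; I would verify the index bookkeeping $l-1\mapsto l$, $n-l+1\mapsto n-l$, etc., carefully against the statement.

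\textbf{Main obstacle.} The delicate point is not the octahedral axiom itself but the identification of the various cones with the correct continuant complexes tensored together --- in particular making sure the two different ways of ``building up'' $\mE_n(X)$ (lowering the first index versus lowering the second) are compatible up to the canonical isomorphisms, and that the connecting maps $f_\bullet$, $\phi_\bullet$ defined by adjunction in Definition~\ref{DefComp} genuinely play the roles of $a$ and $b$ after whiskering with identities. This is essentially a compatibility/naturality check: one needs that $f_{n-1}$ restricted (via $\phi_l$-type maps) factors through the intermediate object $V$ in the way the octahedron demands. I expect this to be routine but notation-heavy, and the cleanest route is probably to phrase everything in terms of the explicit description of $\mE_n(X)$ from Remark~\ref{RemExp} as a complex indexed by twinned subsets, where removing the ``block'' corresponding to positions $\{0,\dots,n-l\}$ versus $\{n-l+1,\dots,n-1\}$ manifestly splits the complex in the required tensor-product shape; alternatively one follows the corresponding argument in \cite[\S 3]{DKS} nearly verbatim.
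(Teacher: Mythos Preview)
Your overall strategy (induction on $l$ using the octahedral axiom, base case $l=2$ being \eqref{DefTria}) matches the paper's, but there is a genuine gap at the heart of the inductive step. The triangle you write down,
\[
\mE_l(X^{(n-l)})\;\xrightarrow{\phi_l}\; X^{(n-l)}\otimes\mE_{l-1}(X^{(n-l+1)})\;\xrightarrow{f_{l-1}}\;\mE_{l-2}(X^{(n-l+2)})\;\to\;,
\]
is \emph{not} an instance of \eqref{DefTria}. Applying \eqref{DefTria} to $Y=X^{(n-l+1)}$ yields
\[
\mE_l(X^{(n-l+1)})\;\to\; X^{(n)}\otimes\mE_{l-1}(X^{(n-l+1)})\;\to\;\mE_{l-2}(X^{(n-l+1)})\;\to\;,
\]
with the \emph{highest} dual $X^{(n)}$ peeled off on the left; the base object in all three terms stays $X^{(n-l+1)}$. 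What your induction actually requires is a ``right-side'' recursion peeling off the \emph{lowest} dual:
\[
\mE_l(X^{(n-l)})\;\to\; \mE_{l-1}(X^{(n-l+1)})\otimes X^{(n-l)}\;\to\;\mE_{l-2}(X^{(n-l+2)})\;\to\;,
\]
and this is not the definition --- it is precisely the content to be proved. Your ``Main obstacle'' paragraph recognises a compatibility issue but treats it as routine bookkeeping; in fact it is the entire substance of the argument, and neither the explicit description in Remark~\ref{RemExp} nor a naturality check for the $f_\bullet,\phi_\bullet$ dissolves it.

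The paper resolves this by introducing auxiliary complexes $\mD_n(X)$ in \ref{DefD}, which are \emph{defined} by the right-side recursion $\mD_n(X)=\Cone\bigl(t_{n-1}:\mD_{n-1}(X^\ast)\otimes X\to\mD_{n-2}(X^{\ast\ast})\bigr)[-1]$. The induction is then run with $\mD_{l-1}(X^{(n-l+1)})$ in the first tensor slot: the octahedral step (via a commuting square expressing the second arrow of the $l$-th triangle in two ways, through $\phi_{n-l+1}$ on the right factor and $t_{l-1}$ on the left factor, versus $\theta_{l-1}$ and $f_{n-l+1}$) produces the $(l{+}1)$-st triangle with first factor $\mD_l(X^{(n-l)})$. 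At $l=n$ this reads $\mE_n(X)\to\mD_{n-1}(X^\ast)\otimes X\to\mD_{n-2}(X^{\ast\ast})$, which by definition of $\mD_n$ forces $\mE_n(X)\simeq\mD_n(X)$; substituting back gives the proposition as stated. So the missing idea is the introduction of the $\mD$-complexes (equivalently, proving that $\mE_n$ admits the opposite-side defining triangle), not a straightening of indices.
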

\begin{proof}
For $l=2$, we can take the defining triangle \eqref{DefTria}.
More generally, for $2\le l\le n$, we claim that there is a commutative diagram
$$\xymatrix{
&\mD_{l-1}(X^{(n-l+1)})\otimes X^{(n-l)}\otimes \mE_{n-l}(X)\ar[rd]^{t_{l-1}\otimes\mE_{n-l}(X)}&&\\
\mE_n(X)\ar[r]& \mD_{l-1}(X^{(n-l+1)})\otimes \mE_{n-l+1}(X)\ar[r]\ar[d]_{\theta_{l-1}\otimes\mE_{n-l+1}(X)}\ar[u]^{\mD_{l-1}(X^{(n-l+1)})\otimes\phi_{n-l+1}}&\mD_{l-2}(X^{(n-l+2)})\otimes\mE_{n-l}(X)\\
&\mD_{l-2}(X^{(n-l+2)})\otimes X^{(n-l+1)}\otimes \mE_{n-l+1}(X),\ar[ru]_{\mD_{l-2}(X^{(n-l+2)}) \otimes f_{n-l+1}}&
}$$
where the middle row is part of an exact triangle.
As $\theta_1=\id$, the case $l=2$ indeed reduces to the previous triangle. That the diagram is commutative follows immediately from the definitions of the maps $f$ and $t$ via adjunction. 

That the exact triangle on the middle row exists follows by iteration on $l$, with the already established base case $l=2$. Indeed, taking the upper triangle for the case $l$, and applying~\ref{Oct} gives the exact triangle with lower commutative triangle for $l+1$.

Finally, we focus on the case $l=n$, in which case we obtain an exact triangle
$$\mE_n(X)\;\to\;\mD_{n-1}(X^\ast)\otimes X\;\xrightarrow{t_{n-1}}\; \mD_{n-2}(X^{\ast\ast}).$$
By Definition in \ref{DefD}, we thus find $\mE_n(X)\simeq\mD_n(X)$, for all $n$. Making this identification in the middle row of the commutative diagram thus concludes the proof.
\end{proof}

Part (1) of the following proposition is essentially a special case of \cite[Proposition~3.3.7]{DKS}. We repeat the proof {\it loc. cit.} since we need to keep track of some morphisms for applications below. 
\begin{prop}\label{PropDKS}
For every $n\in\mZ_{>0}$ there are exact triangles in $\cK(\cA)$:
\begin{enumerate}
\item $\mE^{n-1}(X^\ast)\otimes \mE_{n+1}(X)\to \mE^n(X^\ast)\otimes \mE_n(X)\to\unit\to\;,$
\item $\mE_{n+1}(X)\otimes \mE^{n-1}(X^\ast)\to \mE_n(X^\ast)\otimes\mE^n(X)\to\unit\to\;,$
\end{enumerate}
where the arrows to $\unit$ correspond to evaluation, using the interpretations $\mE_n(X)^\ast\simeq \mE^n(X^\ast)$ and $\mE^n(X)^\ast\simeq\mE_n(X^\ast)$ from~\eqref{eqNN}.
\end{prop}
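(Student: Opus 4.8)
The plan is to prove both exact triangles simultaneously by induction on $n$, using the defining triangles \eqref{DefTria} for the continuant complexes together with the octahedral axiom in the form recorded in \ref{Oct}, closely following the argument of \cite[Proposition~3.3.7]{DKS}. The key observation is that parts (1) and (2) are related by the symmetry $X\leftrightarrow X^\ast$ combined with the identities \eqref{eqNN}, so it suffices to treat one of them carefully and obtain the other by applying the contravariant duality (or the reflection ${}^\ast(-)$) to the whole triangle; concretely, dualising the triangle in (1) and using $\mE_n(X)^\ast\simeq\mE^n(X^\ast)$, $\mE^n(X)^\ast\simeq\mE_n(X^\ast)$ turns it into a triangle of the shape in (2) with $X$ replaced by ${}^\ast X$ or $X^\ast$. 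So I would focus on (1).

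For the base case $n=1$: here $\mE^0(X^\ast)=\unit[0]$ and $\mE_1(X)=X[0]$, so (1) asks for a triangle $\mE_2(X)\to \mE^1(X^\ast)\otimes\mE_1(X)\to\unit\to$, i.e. $\mE_2(X)\to X^\ast\otimes X\xrightarrow{\ev_X}\unit\to$, which is precisely the defining triangle \eqref{DefTria} for $\mE_2(X)=\Cone(f_1)[-1]$. For the inductive step, I would take the two defining triangles
$$\mE_{n+1}(X)\xrightarrow{\phi_{n+1}} X^{(n)}\otimes\mE_n(X)\xrightarrow{f_n}\mE_{n-1}(X)\to$$
and the analogous triangle for $\mE^n(X^\ast)$ built from coevaluation, namely $\mE^{n-2}(X^\ast)\to\mE^{n-1}(X^\ast)\otimes X^{(n-1)}\to\mE^n(X^\ast)\to$ (the cone-presentation of $\mE^n$). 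Tensoring the first with $\mE^{n-1}(X^\ast)$ on the left and the second with $\mE_n(X)$ (suitably), one assembles a diagram with two composable morphisms whose outer cones are, by the inductive hypothesis applied at level $n$ and $n-1$, the complexes $\unit$ and $\mE^{n-1}(X^\ast)\otimes\mE_{n+1}(X)$; feeding this into the octahedral axiom \ref{Oct} produces the desired triangle at level $n$. The map to $\unit$ is tracked through the construction and identified with the evaluation $\mE^n(X^\ast)\otimes\mE_n(X)\simeq\mE_n(X)^\ast\otimes\mE_n(X)\xrightarrow{\ev}\unit$ using \eqref{eqNN}.

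The main obstacle I anticipate is bookkeeping: getting the factors $X^{(i)}$, the left/right placement of the tensorands, and the various shifts to line up so that the octahedral input in \ref{Oct} really has the three complexes $\mE_{n+1}(X)\otimes\mE^{n-1}(X^\ast)$ (or its reflection), $\mE^n(X^\ast)\otimes\mE_n(X)$, and $\unit$ in the right positions, and then checking that the connecting/evaluation maps are the claimed ones rather than some sign- or twist-modified version. This is where keeping track of morphisms — the stated reason for repeating the proof from \cite{DKS} — matters: I would use the explicit description of $\mE_n(X)$ in Remark~\ref{RemExp} as the complex $0\to A\to\bigoplus_{|I|=2}A_I\to\cdots$ over twinned subsets, together with $A=X^{(n-1)}\otimes\cdots\otimes X^\ast\otimes X$, to verify at the level of the total complex that the relevant cone is genuinely the tensor product $\mE^{n-1}(X^\ast)\otimes\mE_{n+1}(X)$ and that the composite to $\unit$ is evaluation. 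Once the indexing is pinned down, each individual identification is routine, so the proof reduces to an induction whose only delicate point is this combinatorial alignment, with part (2) following formally from part (1) by duality.
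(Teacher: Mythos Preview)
Your inductive scheme for part~(1) is essentially the paper's argument: one factors the first arrow of the level-$n$ triangle through $\mE^{n}(X^\ast)\otimes X^{(n+1)}\otimes\mE_{n+1}(X)$ (the commutative square~\eqref{sqfphi}), and an application of the octahedral axiom in the form~\ref{Oct0} (a rotation of your~\ref{Oct}) produces the level-$(n+1)$ triangle. Two minor points: the defining triangle for $\mE^{n}(X^\ast)$ involves $X^{(n)}$, not $X^{(n-1)}$; and only one instance of the inductive hypothesis (level~$n$) is actually used, since the cones of the two factors are read off directly from the defining triangles. The paper also identifies the second arrow with evaluation not by inspecting the total complex as you propose, but by observing that in the universal rigid additive monoidal category on one object the only endomorphisms of $\mE_{n+1}(X)$ are scalars, and then checking the scalar is~$1$ via adjunction.

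Your derivation of (2) from (1) by duality does not work. Applying $(-)^\ast$ or ${}^\ast(-)$ to the triangle in (1) reverses the arrows, so the map \emph{to} $\unit$ (evaluation) becomes a map \emph{from} $\unit$ (coevaluation); after rotation you obtain triangles of the shape $\unit\to\cdots\to\cdots\to$, never of the shape $\cdots\to\cdots\to\unit\to$ required in~(2). Moreover, (1) and (2) differ by the \emph{order} of the tensor factors in the first term, and no amount of contravariant duality or substitution $X\leftrightarrow X^{(i)}$ will swap tensor factors in a non-braided monoidal category. The paper handles (2) by rerunning the entire inductive argument with the alternative complexes $\mD_n(X)$ of~\ref{DefD}, which are built by tensoring on the \emph{right} rather than on the left; the key input is the identification $\mD_n(X)\simeq\mE_n(X)$ established in the proof of \autoref{ThmTrian}, which is itself a non-trivial octahedral argument. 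You need this (or something equivalent) for~(2).
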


\subsubsection{}\label{Oct0} We use the following consequence of the octahedral axiom: Given composable morphisms $a: U\to V$, $b:V\to W$ in a triangulated category, there exists an exact triangle with commutative diagram
$$\xymatrix{  \Cone(b)[-1]\ar[rr]\ar[rd]&&\Cone(a)\ar[rr]&&\Cone(c)\ar[rr]&&\\
&V\ar[ru]\ar[rr]^b&& W\ar[ru]
}$$
where the morphisms to and out of $V$ and the one out of $W$ define the cones.

\begin{proof}[Proof of Proposition~\ref{PropDKS}]
The first part of the proof of (1) can literally be taken from~\cite{DKS}. The case $n=1$ is simply the definition $\mE_2(X)=\Cone(f_1)[-1]$.
The higher cases can then be obtained by iteration.
Indeed, we have a commutative square
\begin{equation}\label{sqfphi}\xymatrix{
&\mE^n(X^\ast)\otimes X^{(n+1)}\otimes \mE_{n+1}(X)\ar[rd]^{\mE^n(X^\ast)\otimes f_{n+1}}&\\
\mE^{n-1}(X^\ast)\otimes \mE_{n+1}(X)\ar[rd]^{\mE^{n-1}(X^\ast)\otimes \phi_{n+1}}\ar[ru]^{f_n^\ast\otimes \mE_{n+1}(X)}&&\mE^n(X^\ast)\otimes \mE_{n}(X).\\
&\mE^{n-1}(X^\ast)\otimes X^{(n)}\otimes\mE_{n}(X)\ar[ru]^{\phi^\ast_n\otimes \mE_n(X)}
}\end{equation} In particular, the already established case $n=1$ of (1) has the composite as its first arrow. We will prove by iteration that triangle (1) exists, with first arrow the composite of~\eqref{sqfphi} and with second arrow evaluation.

Starting from triangle (1), we split up the first arrow according to the upper path in \eqref{sqfphi}. The defining triangles of $\mE_i(X)$ and $\mE^i(X^\ast)$ and \ref{Oct0} then yield the triangle in (1) with $n$ replaced by $n+1$ and first arrow as claimed.  

To focus on the second arrow, we observe that by \ref{Oct0} (and induction), the triangle for $n+1$ can be taken such that its second arrow fits into the commutative square
$$\xymatrix{
&\mE^{n+1}(X^\ast)\otimes\mE_{n+1}(X)\ar[rd]&\\
\mE^n(X^\ast)\otimes X^{(n+1)}\otimes \mE_{n+1}(X)\ar[ru]^{\phi_{n+1}^\ast\otimes \mE_{n+1}(X)}\ar[rd]^{\mE^n(X^\ast)\otimes f_{n+1}}&&\unit.\\
&\mE^{n}(X^\ast)\otimes\mE_n(X)\ar[ru]^{\ev}\ar[ru]
}$$
Now in general, for instance in the universal rigid additive monoidal category on one object \cite{CSV}, it is easy to see that the only endomorphisms of $\mE_{n+1}(X)$ (in $\Ch(\cA)$ or $\cK(\cA)$) are scalar multiples of the identity. Hence, we only need to determine for which scalar multiple of the evaluation of $\mE_{n+1}(X)$, the latest square is commutative. Since the left upwards arrow comes from an adjunction of the left downwards arrow, it follows this scalar is simply 1.

The triangle in part (2) is obtained identically, but using the complexes $\mD_n(X)$ (and their duals) rather than $\mE_n(X)$. Indeed, $\mD_n(X)\simeq \mE_n(X)$ by the proof of \autoref{ThmTrian}.
\end{proof}

%Our main interest in the constructed triangles is the following consequence.

\begin{corollary}\label{CorTria}
If $\mE_{N-1}(X)= 0$ in $\cK(\cA)$ for $X\in\cA$ and $N\in\mZ_{>1}$, then in $\cK(\cA)$:
\begin{enumerate}
\item $\mE_{N-2}(X)\simeq\mE_N(X)[1]$;
\item For $i\in\{N,N-2\}$, evaluation yields isomorphisms
$$\mE_i(X)^\ast \otimes \mE_i(X)\xrightarrow{\sim}\unit\quad\mbox{and}\quad  \mE^i(X)^\ast\otimes\mE^i(X)\xrightarrow{\sim}\unit;$$
\item For $m\in\mN$,
\begin{enumerate}
\item $\mE_{m+N}(X)[1]\;\simeq\; \mE_{N-2}(X^{(m+2)})\otimes \mE_{m}(X)$, if $m$ is odd;
\item $\mE_{m+N}(X^\ast)[1]\;\simeq\; \mE_{N-2}(X^{(m+3)})\otimes \mE_{m}(X^\ast)$, if $m$ is even;
\item $\mE_{m+N}(X)[1]\;\simeq\; \mE_{m}(X^{(N)})\otimes\mE_{N-2}(X)$.
\end{enumerate}

\end{enumerate}
\end{corollary}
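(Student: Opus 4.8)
The plan is to feed the hypothesis $\mE_{N-1}(X)=0$ in $\cK(\cA)$ into the three families of exact triangles established above, namely the defining triangles \eqref{DefTria}, Proposition~\ref{ThmTrian}, and Proposition~\ref{PropDKS}. Throughout I would use the standard fact that in a triangulated category an exact triangle $A\to B\to C\to$ with $B=0$ forces $A\simeq C[-1]$ (equivalently $C\simeq A[1]$), and that tensoring (on either side) with a fixed object preserves exact triangles, since $-\otimes Y$ and $Y\otimes-$ are triangulated endofunctors of $\cK(\cA)$.

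For part (1), I would take the defining triangle \eqref{DefTria} with $n=N$,
$$\mE_N(X)\;\xrightarrow{\phi_N}\; X^{(N-1)}\otimes \mE_{N-1}(X)\;\xrightarrow{f_{N-1}}\; \mE_{N-2}(X)\;\to\;,$$
and observe that the middle term vanishes because $\mE_{N-1}(X)=0$, hence $X^{(N-1)}\otimes\mE_{N-1}(X)=0$; the triangle then gives $\mE_{N-2}(X)\simeq\mE_N(X)[1]$. For part (2), I would apply Proposition~\ref{PropDKS}(1) with $n=N-1$, which reads $\mE^{N-2}(X^\ast)\otimes\mE_N(X)\to \mE^{N-1}(X^\ast)\otimes\mE_{N-1}(X)\to\unit\to$; again the middle term is zero because it contains the factor $\mE_{N-1}(X)=0$, so $\unit\simeq \big(\mE^{N-2}(X^\ast)\otimes\mE_N(X)\big)[1]$ and in particular $\mE^{N-2}(X^\ast)\otimes\mE_N(X)\simeq\unit[-1]$. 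To get the clean statement for $i=N$ I would instead use Proposition~\ref{PropDKS}(1) with $n=N$: its middle arrow is the evaluation $\mE^N(X^\ast)\otimes\mE_N(X)\to\unit$, and its first term $\mE^{N-1}(X^\ast)\otimes\mE_{N+1}(X)$ — wait, this contains $\mE_{N+1}(X)$, not an obviously-zero object; so the right move is: from part (1), $\mE_{N-1}(X)=0$ shows the continuant complex is $2$-periodic up to shift, which combined with the $n=N-1$ and $n=N-2$ instances of Proposition~\ref{PropDKS}(1)--(2) yields that evaluation $\mE_i(X)^\ast\otimes\mE_i(X)\to\unit$ is an isomorphism for $i\in\{N-2,N\}$ (and dually for $\mE^i$), using the identifications $\mE_i(X)^\ast\simeq\mE^i(X^\ast)$ from \eqref{eqNN}. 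Concretely: Proposition~\ref{PropDKS}(1) at $n=N-2$ has middle term $\mE^{N-2}(X^\ast)\otimes\mE_{N-2}(X)$ and first term $\mE^{N-3}(X^\ast)\otimes\mE_{N-1}(X)=0$, giving the iso for $i=N-2$; the iso for $i=N$ then follows from part (1) (and its $\mE^\bullet$-analogue, which holds by the symmetric version of the argument, or by applying \eqref{eqNN}).

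For part (3), the tool is Proposition~\ref{ThmTrian}. For (a), I would take the triangle of Proposition~\ref{ThmTrian} with $n=m+N$ and $l-1=N-1$, i.e.\ $l=N$, which gives
$$\mE_{m+N}(X)\to \mE_{N-1}(X^{(m+1)})\otimes\mE_{m+1}(X)\to \mE_{N-2}(X^{(m+2)})\otimes\mE_m(X)\to\;;$$
here the middle term vanishes because $\mE_{N-1}(X^{(m+1)})$ — hmm, the hypothesis is $\mE_{N-1}(X)=0$, not $\mE_{N-1}(X^{(m+1)})=0$, so I first need to promote the vanishing from $X$ to its duals. That promotion is exactly where parts of \eqref{eqNN} and the shift isomorphisms come in: from $\mE_{N-1}(X)=0$ and \eqref{eqNN} one gets $\mE^{N-1}({}^\ast X)=0$ and $\mE_{N-1}(X^\ast)={}^\ast(\text{something})$... so I expect the honest route is to first prove, by the identifications $\mE_n({}^\ast X)^\ast\simeq\mE^n(X)\simeq{}^\ast\mE_n(X^\ast)$ together with part (1), that $\mE_{N-1}(X)=0$ implies $\mE_{N-1}(X^{(i)})=0$ for every $i\in\mZ$ (the parity bookkeeping explains why (a) needs $m$ odd, (b) needs $m$ even, and why (b) is phrased with $X^\ast$). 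Granting that, each of (a), (b), (c) follows from a single application of Proposition~\ref{ThmTrian} with the appropriate choice of $l$ — $l=N$ for (a) and (b), and $l=m+2$ (so $l-1=m+1$, $n-l+1=N-1$, reading the triangle from the other end) for (c) — in each case killing the middle term because one tensor factor is a vanishing $\mE_{N-1}$ of some dual of $X$, and reading off the stated isomorphism from $A\to 0\to C\to$.

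The main obstacle I anticipate is the bookkeeping in part (3): correctly matching the indices $n,l$ in Proposition~\ref{ThmTrian} to the claimed shapes, and in particular verifying that $\mE_{N-1}$ of the relevant dual $X^{(j)}$ is the factor that vanishes, which forces the case split on the parity of $m$ and the appearance of $X^\ast$ versus $X$ in (b). Establishing the auxiliary claim ``$\mE_{N-1}(X)=0 \Rightarrow \mE_{N-1}(X^{(i)})=0$ for all $i$'' cleanly — presumably by combining \eqref{eqNN} with the $2$-periodicity from part (1) and an induction on $|i|$ — is the one genuinely non-bookkeeping step, and I would isolate it as a sub-lemma at the start of the proof of (3).
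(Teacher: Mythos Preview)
Your plan for parts (1), (2), and (3)(c) is correct and matches the paper's route: the defining triangle \eqref{DefTria} for (1), Proposition~\ref{PropDKS} at $n=N-2$ (together with part (1) for the case $i=N$) for (2), and Proposition~\ref{ThmTrian} with $l=m+2$ for (3)(c), where the middle term carries the factor $\mE_{N-1}(X)$ itself.

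The one genuine gap is your proposed sub-lemma for (3)(a),(b). The claim ``$\mE_{N-1}(X)=0\Rightarrow\mE_{N-1}(X^{(i)})=0$ for every $i\in\mZ$'' is not what you want and is not provable from the hypothesis: the identifications in \eqref{eqNN} only relate $\mE_{N-1}(X^{(\mathrm{odd})})$ to duals of $\mE^{N-1}(X)$, and the corollary deliberately assumes only $\mE_{N-1}(X)=0$, not $\mE^{N-1}(X)=0$. Your attempted route via \eqref{eqNN} plus part (1) and induction therefore cannot close. (Note the internal tension in your write-up: if the sub-lemma really held for all $i$, there would be no reason for the parity split you then invoke.)

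The paper's fix is much simpler and bypasses \eqref{eqNN} entirely: one has a direct isomorphism $\mE_{N-1}(X^{(2i)})\simeq\mE_{N-1}(X)^{(2i)}$ for every $i$, because the complex $\mE_{N-1}(X)$ is built from $X,X^{(1)},\ldots,X^{(N-2)}$ and their evaluations, so applying the double-dual autoequivalence $(-)^{(2i)}$ shifts every superscript by $2i$. Hence vanishing propagates to all \emph{even} duals for free. This is precisely why (a) requires $m$ odd (so that $m+1$ is even and $\mE_{N-1}(X^{(m+1)})=0$) and why (b) is stated for $X^\ast$ with $m$ even (so that $(X^\ast)^{(m+1)}=X^{(m+2)}$ is an even dual). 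With this observation your choice $l=N$ in Proposition~\ref{ThmTrian} finishes (a) and (b) exactly as you outlined.
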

\begin{proof}
Part (1) follows from the defining triangle~\eqref{DefTria}. Part (2) follows from Proposition~\ref{PropDKS}. Part (3) consists of special cases of \autoref{ThmTrian}, using $\mE_{N-1}(X^{(2i)})\simeq\mE_{N-1}(X)^{(2i)}$.
\end{proof}

\subsection{Bounded objects}
In practice, we will use the following characterisations as definition of homotopical boundedness:
\begin{prop}\label{PropSph}For $N\in \mZ_{>0}$, the following conditions on the rigid object $X\in\cA$ are equivalent:
\begin{enumerate}
\item $X$ is homotopically $N$-bounded;
\item 
$\mE_{N-1}(X)\simeq 0\simeq \mE^{N-1}(X)\;$ in $\;\cK(\cA)$;
\item $\mE_{N-1}(X)\simeq 0\simeq \mE_{N-1}(X^\ast)\;$ in $\;\cK(\cA)$;
\item $\mE_{N-1}(X^{(i)})\simeq 0 \;$ in $\;\cK(\cA),\;$ for all $i\in\mZ.$
\end{enumerate}
\end{prop}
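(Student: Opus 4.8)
The plan is to establish the cycle of implications $(1)\Rightarrow(2)\Rightarrow(3)\Rightarrow(4)\Rightarrow(2)$, together with showing $(2)\Rightarrow(1)$, so that all four become equivalent. The implication $(1)\Rightarrow(2)$ is essentially the definition unwound: by \cite[Definition~4.1.1]{DKS}, $N$-sphericality of the endofunctor $-\otimes X$ on $\mathrm{N}_{\mathrm{dg}}(\mathrm{Ch}(\cA))$ is phrased in terms of the vanishing of certain iterated cones built from the unit and counit of the adjunction $-\otimes {}^\ast X\dashv -\otimes X\dashv -\otimes X^\ast$, and the complexes $\mE_{N-1}(X)$ and $\mE^{N-1}(X)$ of Definition~\ref{DefComp} are precisely (the translation into homological chain-complex language of) those iterated cones, as justified by \cite[equation~(3.3.2)]{DKS} and the discussion following it. So $(1)\Leftrightarrow(2)$ should be close to a definitional matching; I would spell out carefully how the cone construction of \cite{DKS} matches the iterated $\Cone(f_{n-1})[-1]$ and the dual construction, and cite the relevant parts of \cite{DKS}. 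The only genuine content to check is the sign/shift bookkeeping so that the two vanishing statements line up; this is the kind of routine-but-fiddly step I would not grind through in detail.

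For $(2)\Rightarrow(3)$: by the identification \eqref{eqNN}, $\mE^{N-1}(X)\simeq {}^\ast\mE_{N-1}(X^\ast)$, and ${}^\ast(-)$ is an additive (contravariant) autoequivalence on rigid objects, hence preserves and reflects the zero object in $\cK(\cA)$; therefore $\mE^{N-1}(X)\simeq 0$ in $\cK(\cA)$ if and only if $\mE_{N-1}(X^\ast)\simeq 0$ in $\cK(\cA)$. Thus $(2)$ and $(3)$ are literally the same pair of conditions rewritten. The implication $(3)\Rightarrow(4)$ is where the exact triangles of the previous subsection do the real work: assuming $\mE_{N-1}(X)\simeq 0\simeq\mE_{N-1}(X^\ast)$, I would use Corollary~\ref{CorTria}(3), or more directly the isomorphisms $\mE_{N-1}(X^{(2i)})\simeq\mE_{N-1}(X)^{(2i)}$ used in its proof, to get $\mE_{N-1}(X^{(i)})\simeq 0$ for all even $i$ from $\mE_{N-1}(X)\simeq 0$, and for all odd $i$ from $\mE_{N-1}(X^\ast)\simeq 0$, since applying the $i$-th dual functor to a complex that is contractible in $\cK(\cA)$ yields a contractible complex (dual functors are additive and send the zero object to the zero object). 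Finally $(4)\Rightarrow(2)$ is trivial: condition $(4)$ specialised to $i=0$ gives $\mE_{N-1}(X)\simeq 0$, and specialised to $i=-1$ (the right dual $X^\ast$, using the convention for negative indices in \ref{duals}) gives $\mE_{N-1}(X^\ast)\simeq 0$, which is $(3)$, hence $(2)$.

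The main obstacle, and the step I would treat most carefully, is $(1)\Leftrightarrow(2)$: verifying that the $N$-spherical condition of \cite[Definition~4.1.1]{DKS} for the regular endofunctor of $\mathrm{N}_{\mathrm{dg}}(\mathrm{Ch}(\cA))$ really does translate, under the dg-nerve/homotopy-category dictionary, into the vanishing of $\mE_{N-1}(X)$ and $\mE^{N-1}(X)$ in $\cK(\cA)$ — in particular that vanishing in the stable $\infty$-category $\mathrm{N}_{\mathrm{dg}}(\mathrm{Ch}(\cA))$ is detected on its homotopy category $\cK(\cA)$ (which holds since an object of a stable $\infty$-category is zero iff it is zero in the homotopy category), and that the $N-1$ vs.\ $N$ indexing, as well as the role of both $\mE_{N-1}$ and its `dual' $\mE^{N-1}$ (coming from the two adjunctions), is correctly accounted for. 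Concretely I would invoke the fact, recorded in the introduction and in \cite{DKS}, that $-\otimes X$ is $N$-spherical iff $\mE_{N-1}(X)$ and $\mE^{N-1}(X)$ vanish, reducing the proof of $(1)\Leftrightarrow(2)$ to citing \cite[\S4]{DKS} together with the observation that the twisted-cotwisted-functor vanishing conditions there are exactly the two iterated cones $\mE_{N-1}(X)$, $\mE^{N-1}(X)$ built in Definition~\ref{DefComp}. Everything after that — the equivalences $(2)\Leftrightarrow(3)\Leftrightarrow(4)$ — is then a short formal consequence of \eqref{eqNN}, the behaviour of duality functors on the zero object of $\cK(\cA)$, and the triangles of Proposition~\ref{ThmTrian}/Corollary~\ref{CorTria}.
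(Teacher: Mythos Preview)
Your proposal is correct and follows essentially the same route as the paper. For $(1)\Leftrightarrow(2)$ you do exactly what the paper does: observe that \cite[Definition~4.1.1]{DKS} is a vanishing condition on certain exact endofunctors, that vanishing of an exact functor on a stable $\infty$-category is detected on its homotopy category $\cK(\cA)$, and that the endofunctors in question are $-\otimes\mE_{N-1}(X)$ and $-\otimes\mE^{N-1}(X)$, which vanish iff $\mE_{N-1}(X)$ and $\mE^{N-1}(X)$ do. For $(2)\Leftrightarrow(3)\Leftrightarrow(4)$ the paper simply says ``follows immediately from \eqref{eqNN}'': indeed, combining the two identifications in \eqref{eqNN} (applied to $X^{(i)}$) gives $\mE_{N-1}(X^{(i-1)})^{\ast\ast}\simeq\mE_{N-1}(X^{(i+1)})$, so vanishing propagates in steps of two, and $(3)$ covers both parities. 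Your detour through Corollary~\ref{CorTria}(3) is unnecessary (and that corollary is stated under the hypothesis $\mE_{N-1}(X)=0$, so it does not quite say what you want), but you correctly identify the underlying fact $\mE_{N-1}(X^{(2i)})\simeq\mE_{N-1}(X)^{(2i)}$.

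One small slip: in the paper's conventions $X^{(1)}=X^\ast$ is the \emph{left} dual, not the right dual, so in your $(4)\Rightarrow(3)$ step you should specialise to $i=1$, not $i=-1$. This does not affect the argument.
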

\begin{proof}
The definition of sphericality in \cite[Definition~4.1.1]{DKS} is given in terms of vanishing of certain (exact) functors on stable $\infty$-categories. These vanish if and only if they vanish on the homotopy category. By our mimicking of the constructions in \cite{DKS}, the corresponding endofunctors on
$$\mathrm{Ho}(N_{\mathrm{dg}}(\Ch(\cA)))\;\simeq\; \cK(\cA)$$
are $-\otimes \mE_{N-1}(X)$ and $-\otimes \mE^{N-1}(X)$. Clearly, they vanish if and only if (2) is satisfied.

Equivalence of (2), (3) and (4) follows immediately from equation~\eqref{eqNN}.
\end{proof}

\begin{example}
\begin{enumerate}
\item No objects are homotopically 1-bounded.
\item The homotopically 2-bounded objects are precisely the zero objects.
\item The homotopically 3-bounded objects are precisely the invertible objects.
\end{enumerate}
\end{example}

\begin{remark}
\begin{enumerate}
\item Both conditions in Proposition~\ref{PropSph}(2) are needed. Indeed, this is the case already for $N=3$, as there exist `left invertible' or `right invertible' objects which are not invertible.
\item In \cite[\S 5.5]{DKS}, the notion of an `$N$-spherical object' is introduced, generalising the notion of (4-)spherical objects in triangulated categories due to Seidel and Thomas. We therefore do not use the terminology (homotopically) `$N$-spherical objects' for what we call (homotopically) `$N$-bounded objects'.
\item The choice of enumeration to let $2$-bounded objects be zero (as opposed to for instance `$0$-bounded') is motivated in \cite{DKS} by corresponding periodicity of semi-orthogonal decompositions; we can observe the shadow of this behaviour in Proposition~\ref{CorMinDeg} below. The choice is also convenient for tensor categories, see for instance Theorem~\ref{ThmVer}(1).
\end{enumerate}
\end{remark}

 The following result is a special case of \cite[Proposition~4.1.4(b)]{DKS}, obtained via a completely different proof. We will henceforth say that $X$ is {\bf strictly homotopically $N$-bounded} if it is homotopically $N$-bounded, but not homotopically $N'$-bounded for $N'<N$.
 
 \begin{prop}\label{CorMinDeg}
Assume that $X$ is homotopically bounded, then there exists (a unique) $N\in\mZ_{>1}$ such that $X$ is homotopically $M$-bounded if and only if~$N|M$. 
\end{prop}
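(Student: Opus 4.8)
The plan is to extract the multiplicative ``divisibility'' structure of the set
$$S(X)\;:=\;\{\,N\in\mZ_{>0}\;:\; X \text{ is homotopically } N\text{-bounded}\,\}$$
directly from Corollary~\ref{CorTria}(3). First I would record the trivial observation that $S(X)\subseteq\mZ_{>1}$ (by the Example following Proposition~\ref{PropSph}, no object is $1$-bounded), and that $S(X)$ is nonempty by hypothesis; set $N:=\min S(X)$. The goal is to show $S(X)=N\mZ_{\ge1}\cap\mZ_{>1}$, i.e. $M\in S(X)\iff N\mid M$.

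For the implication $N\mid M\Rightarrow M\in S(X)$, I would run an induction on the multiplier, the key inductive step being: if $\mE_{N-1}(X)\simeq 0$ in $\cK(\cA)$ and $\mE_{K-1}(X)\simeq 0$ in $\cK(\cA)$, then $\mE_{K+N-1}(X)\simeq 0$. This is exactly Corollary~\ref{CorTria}(3): writing $m=K-1$ we get, from part (c), $\mE_{m+N}(X)[1]\simeq \mE_{m}(X^{(N)})\otimes\mE_{N-2}(X)$; but $\mE_{m}(X^{(N)})\simeq\mE_{m}(X)^{(N)}$ (the twist of a complex that vanishes in $\cK(\cA)$ still vanishes, since the duality functors are additive and monoidal on $\cK(\cA)$), hence $\mE_{K+N-1}(X)\simeq 0$. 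Since by Proposition~\ref{PropSph} homotopical $L$-boundedness is equivalent to $\mE_{L-1}(X)\simeq0\simeq\mE_{L-1}(X^\ast)$, and the same argument applies verbatim to $X^\ast$, this shows $K,N\in S(X)\Rightarrow K+N\in S(X)$; starting from $N\in S(X)$ and iterating gives $kN\in S(X)$ for all $k\ge1$. (One should note $2N\ge N>1$ so no boundary issue arises.)

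For the converse, suppose $M\in S(X)$ and write $M=qN+r$ with $0\le r<N$; I must show $r=0$. Assume $r>0$ for contradiction. Using $N\in S(X)$ repeatedly as above we have $(q{-}1)N\in S(X)$ when $q\ge2$, equivalently $\mE_{(q-1)N-1}(X^{(i)})\simeq0$ for all $i$ by Proposition~\ref{PropSph}(4); applying Corollary~\ref{CorTria}(3)(c) with the roles reversed — i.e.\ reading the isomorphism $\mE_{m+N}(X)[1]\simeq\mE_m(X^{(N)})\otimes\mE_{N-2}(X)$ and, crucially, the fact (Corollary~\ref{CorTria}(2)) that $\mE_{N-2}(X)$ is $\otimes$-invertible in $\cK(\cA)$ whenever $\mE_{N-1}(X)\simeq0$ — I can cancel the factor $\mE_{N-2}(X)$ and conclude from $\mE_{M-1}(X)\simeq0$ that $\mE_{M-1-N}(X^{(N)})\simeq0$, hence $\mE_{M-1-N}(X)\simeq0$, i.e.\ $M-N\in S(X)$ provided $M-N>1$. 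Iterating $q$ times yields $r\in S(X)$ (when $r>1$) or forces a contradiction directly (when $r\le1$, since no object is $1$-bounded and the $r=0$ case is what we want), contradicting minimality of $N$ in the first case. Either way $r=0$, so $N\mid M$. Uniqueness of $N$ is automatic: $N=\min S(X)$ is determined by $X$, and $N\in\mZ_{>1}$ since $S(X)\subseteq\mZ_{>1}$.

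\textbf{Main obstacle.} The only delicate point is the ``cancellation'' step in the converse: to run the induction downward I need $\mE_{N-2}(X)$ to be invertible in $\cK(\cA)$ under the standing hypothesis $\mE_{N-1}(X)\simeq0$, which is supplied by Corollary~\ref{CorTria}(2) (the evaluation map $\mE_{N-2}(X)^\ast\otimes\mE_{N-2}(X)\xrightarrow{\sim}\unit$), together with the compatibility $\mE_{N-1}(X^{(2i)})\simeq\mE_{N-1}(X)^{(2i)}$ used in the proof of Corollary~\ref{CorTria}(3). Handling the small-$r$ boundary cases ($r=0$ versus $r=1$, and $M-N\le1$) cleanly is a minor bookkeeping matter but must be stated explicitly so the induction never strays below degree $2$.
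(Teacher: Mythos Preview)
Your overall strategy is the paper's: invoke Corollary~\ref{CorTria}(2) for invertibility of $\mE_{N-2}$, then Corollary~\ref{CorTria}(3) to relate $\mE_{m+N}$ to $\mE_m$ tensored with an invertible object, yielding the periodicity that drives the divisibility. But your choice to work through part~(c) rather than parts~(a)/(b) introduces two concrete errors.

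First, the claimed isomorphism $\mE_m(X^{(N)}) \simeq \mE_m(X)^{(N)}$ is false when $N$ is odd: a single left dual of $\mE_m(X)$ reverses tensor factors and produces a complex of type $\mE^m$, not $\mE_m$ (cf.\ equation~\eqref{eqNN}); the identity $\mE_{N-1}(X^{(2i)}) \simeq \mE_{N-1}(X)^{(2i)}$ used in the proof of Corollary~\ref{CorTria}(3) requires an \emph{even} shift. In your forward step this is harmless, since you only need $\mE_{K-1}(X^{(N)}) = 0$, and that follows directly from Proposition~\ref{PropSph}(4) once $K \in S(X)$ --- but you should cite that, not the false isomorphism.

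Second, and more seriously, in the converse you obtain $\mE_{M-1-N}(X^{(N)}) \simeq 0$ and then assert ``hence $\mE_{M-1-N}(X) \simeq 0$''. This does not follow: vanishing for $X^{(N)}$ does not give vanishing for $X$. The repair is to run the cancellation for every $X^{(i)}$ simultaneously (legitimate since each $X^{(i)}$ is also $N$- and $M$-bounded, again by Proposition~\ref{PropSph}(4)); you then get $\mE_{M-1-N}(X^{(i+N)}) = 0$ for all $i$, hence $\mE_{M-1-N}(X^{(j)}) = 0$ for all $j$, so $M - N \in S(X)$.

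The paper sidesteps both issues by using Corollary~\ref{CorTria}(3)(a) and~(b) instead of~(c): those isomorphisms keep the same argument ($X$, resp.\ $X^\ast$) on both sides, so the biconditional ``$n$-bounded $\Leftrightarrow$ $(n{-}N)$-bounded'' drops out immediately once all $\mE_{N-2}(X^{(i)})$ are known to be invertible.
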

 \begin{proof}
 Using \autoref{PropSph}(4) and \autoref{CorTria}(2), if $X$ is homotopically $N$-bounded, then $\mE_{N-2}(X^{(i)})$ is left (and in fact also right, by the second isomorphism in \autoref{CorTria}(2)) invertible, for every $i\in\mZ$. Hence, by \autoref{CorTria}(3)(a) and (b) applied to all $X^{(i)}$, we find that~$X$ is then homotopically $n$-bounded if and only if it is homotopically $n-N$-bounded. The conclusion then follows easily.
 \end{proof}

%%%%%%%%%%%%%%%%%%%%%%%%%%%%%%%%%%%%%%%%%%%%%%%%%%%%%%%%%%%%%%%%%%%%%%%%%%%%%%%%%%%%%%%%%%%%%%%%%%%%%%%%%%%%%%%%%%%%%%%%%%%%%%%%%%%%%%%%%%%%%%%%%%%%%%%%%%%%%%%%%%%%%%%%%%%%%%%%%%%%%%%%%%%%%%%%%%%%%%%%%%%%%%%%%%%%%%%%%%%%%%%%%%%%%%%%%%%%%%%%%%%%%%%%%%%%%%%%%%%%%%%%%%%%%%%%%%%%%%%%%%%%%%%%%%%%%%%%%%%%%%%%%%%%%%%%%%

\section{Pivotal case}\label{SecPiv}

\subsection{Pivotal objects}

We let $\cA$ again be an additive monoidal category.  We have the commutative, see \cite[Proposition~2.2.10]{EGNO}, ring
$$R:=\End_{\cA}(\unit).$$

\subsubsection{}

Following \cite{Sh}, we refer to a pair $(X,\varphi)$ of an object $X\in\cA$ (which admits the first two left adjoints) and an isomorphism
$$\varphi: X\xrightarrow{\sim} X^{\ast\ast}\quad \mbox{in } \;\cA, $$
as a {\bf pivotal object} in $\cA$. The underlying object of a pivotal object is clearly rigid, in the sense of \ref{duals}.

\subsubsection{} As will be made more precise in \ref{unifun}, when considering a pivotal object, we can pretend to be working in a pivotal category. In particular, we have the categorical dimensions as in \cite[Definition~4.7.11]{EGNO}
$$d_X:=\dim X=\ev_{X^\ast}\circ (\varphi\otimes X^\ast)\circ \co_X,\quad d_{X^\ast}:=\dim X^\ast =\ev_{X}\circ (X^\ast\otimes \varphi^{-1} )\circ \co_{X^\ast}$$
in $R$. We will abbreviate
$$\bR(X)\;:=\; (R,d_{X^\ast},d_X),$$
even though the triple depends on $\varphi$. We also have
$$\bR(X)'\;=\;\bR(X^\ast),$$
if we consider the pivotal object $(X^\ast,(\varphi^\ast)^{-1})$.

Our main results in this section are summarised in the following theorems, which will be proved in Section~\ref{SecProofs}. For a pivotal object $(X,\varphi)$ in $\cA$, we have an obvious algebra morphism,
$$\TL_n(\bR(X))\;\to\; \End_{\cA}(\cdots \otimes X\otimes X^\ast\otimes X),$$
where the tensor product has $n$ factors, see \ref{unifun} for more details.
\begin{theorem}\label{ThmPiv1}  The following conditions are equivalent on a triple $\bR:=(R,\delta_1,\delta_2)$ and $n\in\mZ_{>0}$:
\begin{enumerate}
\item $\bR$ is $\cJW_n$;
\item for every pseudo-abelian monoidal category $\cA$, with pivotal object $(X,\varphi)$ with $\bR(X)=\bR$, the complex $\mE_n(X)$ is isomorphic in $\cK(\cA)$ to an object in $\cA\subset\cK(\cA)$.
\end{enumerate}
More explicitly, in $\cK(\cA)$ we have (when $\bR(X)=\bR$ is $\cJW_n$)
$$\mE_n(X)\;\simeq\;\JW_n(\cdots\otimes X\otimes X^\ast\otimes X)[0].$$
\end{theorem}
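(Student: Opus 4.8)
The plan is to prove the equivalence by analyzing how $\mE_n(X)$ simplifies in the homotopy category via the exact triangles of the previous sections, using the Temperley-Lieb algebra as the organizing tool.

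\emph{Step 1: Setting up the TL action and the relevant complex.} First I would make precise the algebra morphism $\TL_n(\bR(X)) \to \End_\cA(\cdots \otimes X \otimes X^\ast \otimes X)$ (the content of \ref{unifun}): evaluation/coevaluation pairs give the ``cup-cap'' generators $U_i$, and the two loop values $\delta_1 = d_{X^\ast}$, $\delta_2 = d_X$ are forced by the pivotal structure $\varphi$. Passing to the idempotent completion $\cA$ (pseudo-abelian) is exactly what allows us to speak of the image object $\JW_n(\cdots \otimes X \otimes X^\ast \otimes X)$ when $\JW_n$ exists. The target complex on the right-hand side of the displayed formula sits in homological degree $0$.

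\emph{Step 2: From the direction $\cJW_n \Rightarrow$ concentration in one degree.} Here I would induct on $n$ using the defining exact triangle \eqref{DefTria}:
$$\mE_n(X) \xrightarrow{\phi_n} X^{(n-1)} \otimes \mE_{n-1}(X) \xrightarrow{f_{n-1}} \mE_{n-2}(X) \to .$$
By induction $\mE_{n-1}(X) \simeq \JW_{n-1}(\cdots)[0]$ and $\mE_{n-2}(X) \simeq \JW_{n-2}(\cdots)[0]$ in $\cK(\cA)$, so the triangle becomes (up to homotopy) a two-term complex $X^{(n-1)} \otimes \JW_{n-1}(\cdots) \to \JW_{n-2}(\cdots)$ sitting in degrees $1,0$, and $\mE_n(X)$ is its shifted cone, i.e. a complex in degrees $1,0$. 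The key point is that the map $f_{n-1}$ in this reduced form is (a whiskering of) a single cap composed with $\JW_{n-1}$, and one shows it is a \emph{split epimorphism} precisely when $\binom{n}{i}_{\bR}$ are invertible for $1 \le i \le n$ — this is where Hazi's criterion (Proposition~\ref{PropHazi}) enters. Concretely, the Jones-Wenzl recursion expresses $\JW_n$ inside $\JW_{n-1} \otimes X$ (or the appropriate lopsided analogue), and the coefficient appearing is a ratio of quantum binomials; invertibility of that coefficient lets us split off $\JW_n(\cdots)[0]$ as the kernel complex, killing the rest homotopically. So $\mE_n(X) \simeq \ker(f_{n-1})[0] \simeq \JW_n(\cdots \otimes X \otimes X^\ast \otimes X)[0]$.

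\emph{Step 3: The converse, $\neg\cJW_n \Rightarrow$ failure of concentration.} For this I would invoke the universal example: take $\cA$ to be the idempotent completion of the universal pivotal (rigid additive monoidal) category on one object with $\bR(X) = \bR$ — e.g. built from $\TL$-diagrammatics as in \cite{CSV} — where by construction $\End_\cA(\cdots \otimes X \otimes X^\ast \otimes X) \cong \TL_n(\bR)$. If $\bR$ is not $\cJW_n$, then $\JW_n$ genuinely does not exist in this algebra, and one must show $\mE_n(X)$ cannot be homotopy equivalent to any object of $\cA[0]$. The cleanest route is a homology/Grothendieck-group obstruction: if $\mE_n(X) \simeq Z[0]$ for some $Z \in \cA$, then by Remark~\ref{RemCont} the class $[Z] = [\mE_n(X)] = \kappa_n$ evaluated appropriately in the split Grothendieck ring, but more usefully one extracts from $\mE_n(X) \simeq Z[0]$ that the differentials of the explicit complex \eqref{eqAI} must be split in a compatible way, and chasing this back through the recursion forces all $\binom{n}{i}_{\bR}$ to be invertible, contradicting $\neg\cJW_n$. (Alternatively: a homotopy equivalence to a complex in degree $0$ forces the top differential $A \to \bigoplus_{|I|=2} A_I$ in \eqref{eqAI} to be a split mono and the bottom one to be a split epi; unravelling the split epi at the bottom is again exactly the quantum-binomial invertibility.)

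\emph{Main obstacle.} The hard part will be Step 2's splitting argument — identifying the reduced map $f_{n-1}\colon X^{(n-1)} \otimes \JW_{n-1}(\cdots) \to \JW_{n-2}(\cdots)$ explicitly as a morphism in the Temperley-Lieb algebra, computing the relevant coefficient, and showing its invertibility (equivalently, the full $\cJW_n$ condition via Proposition~\ref{PropHazi}) is both necessary and sufficient for the complex to split into a single degree. This requires being careful with the lopsided/two-coloured conventions (the roles of $\delta_1$ versus $\delta_2$, and which side the cups sit on) and with signs coming from the $[-1]$ shifts in Definition~\ref{DefComp}. The converse direction (Step 3) is then comparatively formal once the universal category is in hand, since it is essentially the contrapositive of the splitting computation performed in the universal setting.
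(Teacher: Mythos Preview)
Your Step~3 (the direction (2)$\Rightarrow$(1)) is essentially the paper's argument: in the universal category $2\TL(\bR)^{en}$, a homotopy equivalence $\mE_n(\wedge)\simeq Z[0]$ produces an idempotent $e\in\End(C_0)=\TL_n(\bR)$ with $d_0e=0$ (so every cap kills $e$) and $\id-e$ factoring through $d_0$ (so the coefficient of the identity diagram in $e$ is $1$). By Definition~\ref{DefJW} this forces $e=\JW_n$, hence $\JW_n$ exists. That part is fine.

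The problem is Step~2. Your induction needs $\mE_{n-1}(X)\simeq\JW_{n-1}(\cdots)[0]$ and $\mE_{n-2}(X)\simeq\JW_{n-2}(\cdots)[0]$ as input, i.e.\ you need $\bR$ to be $\cJW_{n-1}$ and $\cJW_{n-2}$. But $\cJW_n$ does \emph{not} imply $\cJW_{n-1}$. Concretely, take $R$ any field and $\delta_1=\delta_2=0$: then $[2]_{\bR}=0$ so $\JW_2$ fails to exist, yet $\JW_3=1-U_1U_2-U_2U_1$ is a perfectly good idempotent in $\TL_3(\bR)$ (one checks $U_ie=0$ and $e^2=e$ directly). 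So for $n=3$ your induction hypothesis is false in the universal example, and the reduction of the triangle \eqref{DefTria} to a two-term complex in degrees $0,1$ is unavailable. The Wenzl recursion you invoke presupposes exactly the lower idempotents whose existence is not guaranteed.

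The paper avoids this entirely by never inducting on $n$. Instead it works in $2\TL(\bR)^{en}$ with the explicit description of $\mE_n(\wedge)$ from Remark~\ref{RemExp} and builds the chain homotopy directly, in one pass, using only $\JW_n$. The first homotopy map $h_{-1}:\bigoplus_{|I|=2}A_I\to A$ comes from any expression $\JW_n=\id_A-\sum_{|I|=2}\eta^{\varnothing}_I\circ\ev^{\varnothing}_I$; one then propagates this iteratively down the complex, showing at each stage that the obstruction morphisms $\alpha_{J',J}:A_J\to A_{J'}$ factor through $\bigoplus_{|L|=2l+2}A_L$. The factorisation is extracted by a diagrammatic argument (rearranging sums of caps) together with the fact that whiskered evaluations are epimorphisms in $2\TL(\bR)^{en}$. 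No lower Jones--Wenzl idempotents, and no invertibility of individual $[k]_{\bR}$, are ever used.
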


The combination of Theorem~\ref{ThmPiv1} and Proposition~\ref{PropSph}(3) has the following corollary.
\begin{corollary}\label{ThmPiv2}
Fix $N\in\mZ_{>1}$ and assume that $\bR=\bR(X)$ is $\cJW_{N-1}$ and $\cJW_{N-1}'$. Then~$X$ is homotopically $N$-bounded if and only if 
$$\JW_{N-1}(\cdots\otimes X\otimes X^\ast\otimes X)=0\quad\mbox{and}\quad \JW_{N-1}'(\cdots\otimes X^\ast\otimes X\otimes X^\ast)=0$$
for $\JW_{N-1},\JW_{N-1}'$ the Jones-Wenzl idempotents in $\TL_{N-1}(\bR)$ and $\TL_{N-1}(\bR')$.
\end{corollary}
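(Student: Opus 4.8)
The plan is to combine the two named results in the obvious way, being careful about which object and which triple appears on each side. First, recall that by Proposition~\ref{PropSph}(3), the object $X$ is homotopically $N$-bounded if and only if both $\mE_{N-1}(X) \simeq 0$ and $\mE_{N-1}(X^\ast) \simeq 0$ in $\cK(\cA)$. So it suffices to translate each of these two vanishing conditions into the vanishing of a Jones-Wenzl idempotent evaluated on an appropriate tensor word, and this is precisely what Theorem~\ref{ThmPiv1} delivers, once we know the relevant $\cJW$ hypotheses hold.

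Next I would apply Theorem~\ref{ThmPiv1} to the pivotal object $(X,\varphi)$ itself. By assumption $\bR(X) = \bR$ is $\cJW_{N-1}$, so the theorem gives $\mE_{N-1}(X) \simeq \JW_{N-1}(\cdots \otimes X \otimes X^\ast \otimes X)[0]$ in $\cK(\cA)$, where the tensor word has $N-1$ factors. Since the canonical embedding $\cA \to \cK(\cA)$ (via $Y \mapsto Y[0]$) is fully faithful, this complex is zero in $\cK(\cA)$ if and only if the object $\JW_{N-1}(\cdots \otimes X \otimes X^\ast \otimes X)$ is the zero object of $\cA$ (equivalently, the idempotent $\JW_{N-1} \in \TL_{N-1}(\bR)$ maps to $0$ under the algebra morphism $\TL_{N-1}(\bR(X)) \to \End_\cA(\cdots \otimes X \otimes X^\ast \otimes X)$, which is how one should read the statement "$\JW_{N-1}(\cdots) = 0$"). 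This handles the first of the two conditions.

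Then I would run the identical argument for $X^\ast$ in place of $X$. Here one uses the pivotal structure $(X^\ast,(\varphi^\ast)^{-1})$ on $X^\ast$, for which $\bR(X^\ast) = \bR(X)' = \bR'$ as recorded in the paper just before Theorem~\ref{ThmPiv1}. The hypothesis that $\bR$ is $\cJW_{N-1}'$ means precisely that $\bR' = \bR(X^\ast)$ is $\cJW_{N-1}$, so Theorem~\ref{ThmPiv1} applies to $(X^\ast,(\varphi^\ast)^{-1})$ and yields $\mE_{N-1}(X^\ast) \simeq \JW_{N-1}'(\cdots \otimes X^\ast \otimes X \otimes X^\ast)[0]$ in $\cK(\cA)$, with $\JW_{N-1}'$ the Jones-Wenzl idempotent in $\TL_{N-1}(\bR')$. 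Again by full faithfulness of $\cA \hookrightarrow \cK(\cA)$, this vanishes in $\cK(\cA)$ iff $\JW_{N-1}'(\cdots \otimes X^\ast \otimes X \otimes X^\ast) = 0$ in $\cA$. Combining the two equivalences via Proposition~\ref{PropSph}(3) gives exactly the stated biconditional.

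There is no real obstacle here: the corollary is a formal consequence of the two cited results, and the only points demanding a moment's care are (i) matching the triples, namely that passing from $X$ to $X^\ast$ swaps $\bR$ to $\bR'$ and hence the relevant idempotent existence hypothesis is $\cJW_{N-1}'$ for $\bR$, and (ii) noting that the embedding $\cA \subset \cK(\cA)$ is fully faithful so that "zero in $\cK(\cA)$" and "zero object of $\cA$" coincide for complexes concentrated in a single degree. I would write the proof as little more than: "By Proposition~\ref{PropSph}(3), $X$ is homotopically $N$-bounded iff $\mE_{N-1}(X) \simeq 0 \simeq \mE_{N-1}(X^\ast)$ in $\cK(\cA)$. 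Applying Theorem~\ref{ThmPiv1} to $(X,\varphi)$ and to $(X^\ast,(\varphi^\ast)^{-1})$, using $\bR(X) = \bR$ and $\bR(X^\ast) = \bR'$, and the full faithfulness of $\cA \hookrightarrow \cK(\cA)$, converts these two conditions into the two displayed equations."
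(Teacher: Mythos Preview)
Your proposal is correct and is precisely the argument the paper has in mind: the paper presents this corollary without a separate proof, simply stating that it is the combination of Theorem~\ref{ThmPiv1} and Proposition~\ref{PropSph}(3). Your explicit bookkeeping with $\bR(X^\ast)=\bR'$ and the full faithfulness of $\cA\hookrightarrow\cK(\cA)$ just spells out what the paper leaves implicit.
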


We can also interpret the rotatability of Jones-Wenzl idempotents:
\begin{theorem}\label{ThmRot}
Consider a triple $\bR=(R,\delta_1,\delta_2)$ with $R$ a field and $N\in\mZ_{>1}$ for which $\bR$ is $\cJW_{N-1}$. Then the following conditions are equivalent on $\bR$:
\begin{enumerate}
\item $\bR$ is $\cJWR_{N-1}$, explicitly
$$[[N]]_{\bR}\;=\;0\;=\; [[N]]_{\bR'}.$$
\item There exists a pseudo-abelian monoidal category $\cA$ with a pivotal object $X\in\cA$, with $\bR(X)=\bR$, that is {\rm strictly} $N$-bounded.
\end{enumerate}
Moreover, under these conditions, a pivotal object $X$ with $\bR(X)=\bR$ is homotopically $N$-bounded if and only if $\JW_{N-1}(\cdots\otimes X\otimes X^\ast\otimes X)=0$.
\end{theorem}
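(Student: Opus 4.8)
\textbf{Plan for the proof of Theorem~\ref{ThmRot}.}

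The main idea is to combine Theorem~\ref{ThmPiv1}, Corollary~\ref{ThmPiv2}, Corollary~\ref{CorRot}, and the divisibility structure of Proposition~\ref{CorMinDeg}, and then to exhibit a concrete universal example realising the ``only if'' direction of (2)$\Rightarrow$(1) when rotatability fails. For the equivalence of the two displays in (1), note that since $R$ is a field and $\bR$ is already assumed $\cJW_{N-1}$, Corollary~\ref{CorRot} applies verbatim (a field is an integral domain), giving $\cJWR_{N-1}\iff [[N]]_{\bR}=0=[[N]]_{\bR'}$; this is purely a restatement.

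\emph{(1)$\Rightarrow$(2).} Assume $\bR$ is $\cJWR_{N-1}$. In particular $\bR$ is $\cJW_{N-1}$ and $\cJW_{N-1}'$, so Corollary~\ref{ThmPiv2} tells us that for any pseudo-abelian pivotal $(X,\varphi)$ with $\bR(X)=\bR$, homotopical $N$-boundedness is equivalent to the vanishing of the two Jones--Wenzl idempotents evaluated on the alternating tensor products. Here the rotatability is the key input: the rotation map identifies $\JW_{N-1}$ in $\TL_{N-1}(\bR)$ with $\JW_{N-1}'$ in $\TL_{N-1}(\bR')$ up to the algebra isomorphism induced by the symmetry $X \leftrightarrow X^\ast$, so the two conditions in Corollary~\ref{ThmPiv2} collapse to the single condition $\JW_{N-1}(\cdots\otimes X\otimes X^\ast\otimes X)=0$. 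This already proves the final ``Moreover'' sentence of the theorem (conditional on (1), which is assumed there). To produce a \emph{strictly} $N$-bounded example, I would pass to the universal pseudo-abelian rigid monoidal category on a pivotal object with prescribed dimensions $\bR(X)=\bR$ (a two-coloured analogue of the constructions in \cite{CSV}), and then take the quotient by the tensor ideal generated by $\JW_{N-1}(\cdots\otimes X\otimes X^\ast\otimes X)$. By Theorem~\ref{ThmPiv1} this kills $\mE_{N-1}(X)$ in the homotopy category, hence by Proposition~\ref{PropSph}(3) (and the rotatability, which takes care of $\mE_{N-1}(X^\ast)$) makes $X$ homotopically $N$-bounded; one then checks, using Theorem~\ref{ThmPiv1} applied to $\mE_{n}(X)$ for $n<N-1$, that no smaller Jones--Wenzl idempotent can vanish in this universal quotient, because the corresponding object in $\cA$ is a nonzero direct summand. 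So $X$ is strictly $N$-bounded.

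\emph{(2)$\Rightarrow$(1).} Suppose some pseudo-abelian $\cA$ carries a pivotal $X$ with $\bR(X)=\bR$ that is strictly $N$-bounded. By Proposition~\ref{CorMinDeg}, $X$ is homotopically $M$-bounded iff $N\mid M$; in particular $X$ is homotopically $N$-bounded, so $\mE_{N-1}(X)\simeq 0\simeq\mE_{N-1}(X^\ast)$ in $\cK(\cA)$ by Proposition~\ref{PropSph}(3). We are \emph{given} that $\bR$ is $\cJW_{N-1}$; I must deduce that $\bR$ is also $\cJW_{N-1}'$ and that the rotatability condition holds, i.e.\ $[[N]]_{\bR}=0=[[N]]_{\bR'}$. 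Since $\bR$ is $\cJW_{N-1}$, Theorem~\ref{ThmPiv1} gives $\mE_{N-1}(X)\simeq\JW_{N-1}(\cdots\otimes X\otimes X^\ast\otimes X)[0]$, and this being $0$ in $\cK(\cA)$ forces the idempotent to be $0$ in $\End_\cA$; combined with the structure of $\TL_{N-1}(\bR)$ (where $\JW_{N-1}=0$ already implies relations among the quantum binomials, cf.\ Propositions~\ref{PropHazi} and \ref{PropRot0}) and the symmetric vanishing of $\mE_{N-1}(X^\ast)$, I extract exactly the two conditions of Proposition~\ref{PropRot0}(2), hence $\cJWR_{N-1}$, hence (via Corollary~\ref{CorRot}) the displayed equations. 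The slightly delicate point is that a priori only $\cJW_{N-1}$ is assumed, not $\cJW_{N-1}'$; one resolves this by observing that $\mE_{N-1}(X^\ast)\simeq 0$ together with the general shape \eqref{eqAI} of the continuant complex forces $\JW_{N-1}'$ to exist (the obstruction to its existence would otherwise survive as a nonzero summand in a minimal complex representing $\mE_{N-1}(X^\ast)$ in $\cK(\cA)$), so $\bR$ is $\cJW_{N-1}'$ after all, and then Theorem~\ref{ThmPiv1}/Corollary~\ref{CorRot} finish the job.

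\textbf{Main obstacle.} The hardest part is the direction (2)$\Rightarrow$(1), specifically bootstrapping from ``$\bR$ is $\cJW_{N-1}$ and $\mE_{N-1}(X)\simeq 0\simeq\mE_{N-1}(X^\ast)$'' to ``$\bR$ is $\cJW_{N-1}'$'' without assuming it. This requires a careful homotopy-theoretic argument that a continuant complex $\mE_{N-1}(X^\ast)$ which is \emph{not} concentrated in a single degree (because $\JW_{N-1}'$ fails to exist) cannot be contractible — essentially a minimality/Krull--Schmidt argument on complexes over a pseudo-abelian category, using the explicit description in Remark~\ref{RemExp}. Once that is in place, everything else is an application of the already-proved Theorem~\ref{ThmPiv1}, Corollary~\ref{ThmPiv2}, Corollary~\ref{CorRot}, and Proposition~\ref{CorMinDeg}.
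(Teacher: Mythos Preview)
Your approach to (1)$\Rightarrow$(2) is essentially the paper's: quotient $2\TL(\bR)^{en}$ by the tensor ideal generated by $\JW_{N-1}$. However, your strictness argument is incomplete: you invoke Theorem~\ref{ThmPiv1} for $\mE_n(X)$ with $n<N-1$, but $\bR$ need not be $\cJW_n$ for such $n$. The paper instead shows that the quotient map is bijective on $\Hom(Y,Z)$ whenever the total number of factors in $Y,Z$ is below $2N-2$ (using that the partial trace $p_{N-1}(\JW_{N-1})$ vanishes under rotatability), and then observes that homotopical $M$-boundedness for $M<N$ would force the identity on a word of length $M-1$ to factor through one of length $M-3$.

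The real gap is in (2)$\Rightarrow$(1). From $\mE_{N-1}(X^\ast)\simeq 0$ in a \emph{particular} $\cA$ you cannot conclude that $\JW_{N-1}'$ exists in $\TL_{N-1}(\bR')$: Theorem~\ref{ThmPiv1} characterises $\cJW_n$ via the \emph{universal} category, and a specific $\cA$ may have extra relations making the continuant complex contractible even when no Jones--Wenzl idempotent exists. Likewise, the vanishing of $\JW_{N-1}$ on $\cdots\otimes X^\ast\otimes X$ in $\cA$ yields no equations in $R$, so there is no route from here to Proposition~\ref{PropRot0}(2). The paper's argument is entirely different: it introduces the partial trace maps $p_n:\TL_n(\bR)\to\TL_{n-1}(\bR)$ and an identity
\[
\Bigl(\prod_i[[i]]_{\bR}\Bigr)\, p_M\circ\cdots\circ p_{N-1}(\JW_{N-1})\;=\;\Bigl(\prod_l[[l]]_{\bR}\Bigr)\,\JW_{M-1},
\]
valid whenever the right-hand coefficient is invertible. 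If $\cJWR_{N-1}$ fails, Proposition~\ref{PropRot0}(3) provides $M<N$ making that coefficient a unit; then $\JW_{M-1}$ exists and, since $\JW_{N-1}$ acts by zero on the tensor product in $\cA$, so does $\JW_{M-1}$, whence $\mE_{M-1}(X)=0$. Iterating, one lands at some $M'<N$ with $\bR$ being $\cJWR_{M'-1}$ and $\mE_{M'-1}(X)=0$, hence (by the first paragraph of the proof) $X$ is homotopically $M'$-bounded, contradicting strictness. This descent via partial traces is the idea your proposal is missing.
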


\begin{remark}
In principle, we can improve Theorem~\ref{ThmRot}, since the condition in (1) implies (by definition) $\cJW_{N-1}$. Moreover, we currently have no examples of the situation in (2) with $\bR$ that are not $\cJW_{N-1}$ and $\cJW_{N-1}'$. For $N<6$ one can prove directly that no such examples exist.
\end{remark}

\begin{remark}
The proof of Theorem~\ref{ThmRot}(2)$\Rightarrow$(1) will actually show that if, for some pivotal $X$ and the minimal $N\in\mZ_{>0}$ with $\mE_{N-1}(X)= 0$ in $\cK(\cA)$ (assuming it exists), the condition $\JW_{N-1}$ on $\bR(X)$ implies that also $\mE^{N-1}(X)=0$.
\end{remark}

\begin{remark}
Let $(X,\varphi)$ be a pivotal object with $X$ homotopically $N$-bounded and set $\bR=\bR(X)$. It then follows that
$$[N]_{\bR}\;=\;0\;=\;[N]_{\bR'}.$$
Under the assumptions in Theorem~\ref{ThmRot}, for strictly $N$-bounded $X$ this can be improved to
$$[[N]]_{\bR}\;=\;0\;=\;[[N]]_{\bR'}.$$
\end{remark}

\subsection{Universal pivotal category}\label{SecUni}

Fix a triple $\bR=(R,\delta_1,\delta_2)$.

\subsubsection{} We define a strict monoidal $R$-linear category $2\TL(\bR)$. Objects are words in the alphabet $\{\wedge, \vee\}$. Morphisms from $w$ to $w'$ are $R$-linear combinations of pairings of all letters in $w w'$ such that :
\begin{itemize}
\item We only have pairings between identical letters in different words, or distinct letters in the same word.
\item When placing $w$ on a horizontal line, and $w'$ on a second horizontal line above it, we can connect the two dots in each pair in a way that yields a non-crossing diagram.
\end{itemize}
Multiplication is defined by concatenation of diagrams, with evaluation of counter-clockwise loops at $\delta_1$ and clockwise loops at $\delta_2$. The tensor product is given by juxtaposition of diagrams. In particular, the empty word $\varnothing$ is the tensor unit. We have a canonical isomorphism
\begin{equation}\label{TLEnd}
\TL_n(\bR)\;\xrightarrow{\sim}\; \End_{2\TL(\bR)}(\cdots\wedge\vee\wedge\vee\wedge),
\end{equation}
whereas
$$\End_{2\TL(\bR)}(\wedge\wedge\cdots\wedge)\;\simeq\;R.$$
Clearly ${2\TL(\bR)}$ is rigid (and pivotal), with $\wedge^\ast=\vee={}^\ast \wedge$.

\subsubsection{}\label{unifun}Now let $\cA$ be an additive monoidal category with a pivotal object $(X,\varphi)$ and set $\bR:=\bR(X)$. There exists an $R$-linear monoidal functor
$$2\TL(\bR)\;\to\; \cA,$$
which satisfies $\wedge\mapsto X$ and $\vee\mapsto X^\ast$. On morphisms the functor is determined as follows. The clockwise cup is sent to $\co_X$, the counter-clockwise cap to $\ev_X$, the counter-clockwise cup to the composite
$$\unit\xrightarrow{\co_{X^\ast}} X^\ast\otimes X^{\ast\ast}\xrightarrow{X^\ast\otimes \varphi^{-1}}X^\ast\otimes X,$$
and the clockwise cap to the composite
$$X\otimes X^\ast\xrightarrow{\varphi\otimes X^\ast}X^{\ast\ast}\otimes X^\ast\xrightarrow{\ev_{X^\ast}}\unit.$$

We let $2\TL(\bR)^{en}$ be the pseudo-abelian envelope of $2\TL(\bR)$. It comes with a similar universal property amongst pseudo-abelian monoidal categories.

\begin{question}
For each rotatable Jones-Wenzl idempotent, we can consider the tensor ideal in $2\TL(\bR)$ it generates.
In case $R$ is a field, are these ideals prime? Are there any other prime ideals? 
Note that there do exist other tensor ideals in $2\TL(\bR)$ which are not prime, for instance the ideal generated by $\id_{\wedge\wedge}$.
\end{question}

\subsection{Proofs}
\label{SecProofs}
\begin{proof}[Proof of Theorem~\ref{ThmPiv1}]
It is sufficient to prove that $\JW_n$ exists in $\TL_n(\bR)$ if and only if~$\mE_n(\wedge)$, for $\wedge\in 2\TL(\bR)^{en}$, is homotopy equivalent to a chain complex in degree 0.

Any such homotopy equivalence for any chain complex of the form
$$0\to C_0\xrightarrow{d_0}C_{-1}\xrightarrow{d_{-1}}C_{-2}\to\cdots$$
clearly implies the existence of an idempotent $e\in\End(C_0)$ with $d_0e=0$ and such that $\id_{C_0}-e$ factors through $d_0$. In our case, the latter vanishing condition is equivalent to demanding $e=\JW_n\in\TL_n(\bR)$, see Definition~\ref{DefJW}. Hence (2) implies (1). This also already proves that if (2) is satisfied, $\mE_n(X)$ is isomorphic to the evaluation of $\JW_n$ as claimed in the last sentence of the theorem.

Now we prove the other direction (1)$\Rightarrow$(2). We thus assume that $\JW_n$ exists in $\TL_n(\bR)$ and consider $\cA=2\TL(\bR)^{en}$ and $X=\wedge$. We use the description of $\mE_n(X)$ in Remark~\ref{RemExp}
$$0\to A\to \bigoplus_{|I|=2}A_I\to\bigoplus_{|J|=4}A_J\to\cdots.$$
We can write
$$\JW_n\;\in\; \TL_n(\bR)\;\simeq\; \End_{2\TL(\bR)}(A)$$
non-uniquely, and with the notation from Remark~\ref{RemExp}, as 
$$\JW_n\;=\; \id_A-\sum_{|I|=2} \eta^{\varnothing}_I\circ \ev_I^{\varnothing} $$
for chosen morphisms $\eta^{\varnothing}_I: A_I\to A$. The morphism $\oplus_{|I|=2}A_I\to A$, resulting from $\{\eta^\varnothing_I\}$, is the first morphism in the chain homotopy we need to construct, which we can do iteratively.

Consider therefore the following diagram, for $l\ge 1$,
$$\xymatrix{
\bigoplus_{|I|=2l-2}A_I\ar[rr]\ar@{=}[d]&&\bigoplus_{|J|=2l}A_J\ar@{=}[d]\ar[dll]^{\eta}\ar[rr]&&\bigoplus_{|L|=2l+2}A_L\ar@{=}[d]\\
\bigoplus_{|I|=2l-2}A_I\ar[rr]&&\bigoplus_{|J|=2l}A_J\ar[rr]&&\bigoplus_{|L|=2l+2}A_L,
}$$
with an appropriate matrix $\eta_J^I$ constructed in the previous step. Here, `appropriate' means in particular that the two paths from $\oplus A_I$ on the top row to $\oplus A_J$ on the lower row compose to identical maps. We need to show that, for every two $J,J'$ (with $|J|=2l=|J'|$), the map
$$\alpha_{J',J}\;:=\; \delta_{J',J}\,\id_{A_J}-\sum_{I\lhd J'}\ev^{I}_{J'}\circ \eta_{J}^I\;:\; A_J\to A_{J'},$$ factors via $\oplus A_L$. By assumption, for any fixed $I'$, we have
\begin{equation}\label{suma1}
\sum_{J\rhd I'} \alpha_{J',J}\circ \ev^{I'}_J\;=\;0,
\end{equation}
as morphisms $A_{I'}\to A_{J'}$. We thus choose one $ J\rhd I'$, define $i$ by $J\backslash I'=\{i,i+1\}$, and rewrite \eqref{suma1} as
\begin{equation}\label{suma2}
\alpha_{J',J}\circ \ev^{I'}_J\;=\;-\sum_{J''\not=J} \alpha_{J',J''}\circ \ev^{I'}_{J''}.
\end{equation}
By diagrammatic considerations, we can immediately reduce the summation (while keeping the equality) in the right-hand side  of~\eqref{suma2} to those $J''\rhd I'$ that are disjoint from the pair $\{i,i+1\}$. It thus follows that all diagrams appearing (with non-zero coefficient) in the right-hand side contain a cap in some position $\{t,t+1\}$ (with $J''=I'\sqcup\{t,t+1\}$) as well as in position $\{i,i+1\}$. We can thus restructure~\eqref{suma2} as saying
$$\alpha_{J',J}\circ \ev_{J}^{I'}\;=\; \sum_{L\rhd J}\beta_L\circ \ev^J_L\circ \ev^{I'}_J$$ 
for some morphisms $\beta_L: A_L\to A_{J'}.$ Clearly every whiskered evaluation is an epimorphism in $2\TL(\bR)^{en}$, so we find the desired factorisation of $\alpha_{J',J}$.
This iterative argument shows that the chain homotopy exists.
\end{proof}

\begin{proof}[Proof of Theorem~\ref{ThmRot}]
We start by observing that under condition (1), homotopical $N$-boundedness is characterised as in the last phrase of the theorem. Indeed, by construction and the defining property of rotatability in \cite[\S 2]{Ha}, both conditions in~\autoref{ThmPiv2} become equivalent under assumption (1).

In this paragraph we let $R$ be an arbitrary commutative ring, not necessarily a field.
We have a morphism of $R$-modules from \cite[Definition~6.13]{EW}
$$p_n:\TL_n(\bR)\;\to\; \TL_{n-1}(\bR).$$
For example, when $n$ is even, in the categorical interpretation, see \eqref{TLEnd}, with $X$ for $\wedge$, it is given by
$$p_n:\;f\;\mapsto \; (\ev_{X^\ast}\otimes X\otimes X^\ast\otimes\cdots\otimes X)\circ (X\otimes f) \circ (\co_X\otimes X\otimes X^\ast\otimes\cdots\otimes X).$$ 
It is proved in \cite[(6.27)]{EW} that in case $\JW_{i}$ exist in $\TL_i(\bR)$ for $M-1\le i<N$ (in particular $[M]_R$ is invertible), then
$$p_{M}\circ p_{M-1}\circ\cdots \circ p_{N-1}(\JW_{N-1})\;=\; \frac{[N]_{\bR}}{[M]_{\bR}}\,\JW_{M-1}.$$
This implies that
\begin{equation}\label{Eqpp}
\left(\prod_{i}[[i]]_{\bR}\right)\;p_{M}\circ p_{M-1}\circ\cdots \circ p_{N-1}(\JW_{N-1})\;=\; \left(\prod_{l}[[l]]_{\bR}\right)\;\JW_{M-1},\end{equation}
where $i$ runs over all divisors of~$M$ that do not divide $N$ and $l$ runs over all divisors of $N$ that do not divide $M$. 
Now we only assume that $\JW_{N-1}$ exists. Whenever the coefficient of $\JW_{M-1}$ in \eqref{Eqpp} is invertible in $R$, it follows from the genericity of the coefficients of Jones-Wenzl idempotents, see \cite[Theorem~4.4]{Ha}, that $\JW_{M-1}$ exists and satisfies \eqref{Eqpp}.

Now let $R$ again be a field. First assume that (2) is satisfied, so we have corresponding $\cA,X$. In order to find a contradiction, assume that (1) is not satisfied. By Proposition~\ref{PropRot0}(3), there exists $1\le M< N$ for which the scalar factor in the right-hand side of~\eqref{Eqpp} is not zero and hence invertible.
 Equation~\eqref{Eqpp} then implies that $\JW_{M-1}$ exists and must act as zero on $\cdots X^\ast\otimes X$ (so $\mE_{M-1}(X)=0$ by Theorem~\ref{ThmPiv1}). In case~$\bR$ is (also) not~$\cJWR_{M-1}$, we can repeat the above procedure to arrive at yet another $M'<M$ for which $\JW_{M'-1}$ exists and $\mE_{M'-1}(X)=0$. By iteration this procedure must thus stop at some $M'<N$ for which $\bR$ is $\cJWR_{M'-1}$ (and $\mE_{M'-1}(X)=0$), for instance because $\bR$ is always $\cJWR_{1}$. By the first paragraph of the proof, $X$ is homotopically $M'$-bounded, contrary to assumption.

Now assume condition (1) is satisfied. We denote the tensor ideal in $2\TL(\bR)^{en}$ generated by $\JW_{N-1}$ by $J$. By the first paragraph of the proof, the generator $X=\wedge$ is homotopically $N$-bounded in $2\TL(\bR)^{en}/J$. Moreover, we claim that 
\begin{equation}\label{ttois}
\Hom_{2\TL(\bR)}(Y,Z)\;\tto\;\Hom_{2\TL(\bR)/J}(Y,Z)
\end{equation}
is an isomorphism if $Y$ and $Z$ are of the form $\cdots\otimes X\otimes X^\ast\otimes X$ where the sum of the numbers of factors in $Y$ and $Z$ is strictly below $2N-2$.
Indeed, this follows easily from the diagrammatics, and the fact that $p_{N-1}(\JW_{N-1})=0$ under condition (1), see \cite[\S 2]{Ha}. In particular, $X\in 2\TL(\bR)^{en}/J$ is not homotopically $M$-bounded for $M<N$. Indeed, the latter would imply that the identity of $\cdots\otimes X^\ast\otimes X$  (with $M-1$ factors in the product) factors through the corresponding product with $M-3$ factors, contradicting isomorphisms~\eqref{ttois}.
\end{proof}

%%%%%%%%%%%%%%%%%%%%%%%%%%%%%%%%%%%%%%%%%%%%%%%%%%%%%%%%%%%%%%%%%%%%%%%%%%%%%%%%%%%%%%%%%%%%%%%%%%%%%%%%%%%%%%%%%%%%%%%%%%%%%%%%%%%%%%%%%%%%%%%%%%%%%%%%%%%%%%%%%%%%%%%%%%%%%%%%%%%%%%%%%%%%%%%%%%%%%%%%%%%%%%%%%%%%%%%%%%%%%%%%%%%%%%%%%%%%%%%%%%%%%%%%%%%%%%%%%%%%%%%%%%%%%%%%%%%%%%%%%%%%%%%%%%%%%%%%%%%%%%%%%%%%%%%%%%%%%%%%%%%%%%%%%%%%%%%%%%%%%%%%%%%%%%%%%%%%%%%%%%%%%%%%%%%%%%%%%%%%%%%%%%%%%%%%%%%%%%%%%%%%%%%%%%%%%%%%%%%%%%%%%%%%%%%%%%%%%%%%%%%%%%%%%%%%%%%%%%%%%%%%%%%%%%%%%%%%%%%%%%%%%%%%%%%%%%%%%%%%%%%%%%%%%%%%%%%%%%%%%%%%%%%%%%%%%%%%%%%%%%%%%%%%%%%%%%%%%%%%%%%%%%%%%%%%%%%%%%%%%%%%%%%%%%%%%%%%%%%%

\section{Tensor categories: general theory}\label{SecTens1}

In the abelian setting, we can also apply the technology of \cite{DKS} to the derived category, rather than the homotopy category of chain complexes. For brevity, we move immediately to the case of tensor categories over a field here. 

Henceforth, unless further specified, we let $\cC$ denote an arbitrary tensor category over a field $\bk$. We let $\Inj\cC$ denote the category of injective objects in $\Ind\cC$.

\begin{definition}
An object $X\in\cC$ is {\bf $N$-bounded}, with $N\in\mZ_{>0}$, if the endofunctor $-\otimes X$ of the stable $\infty$-category $\mathrm{N}_{\mathrm{dg}}(\mathrm{Ch}^+(\Inj\cC))$ is $N$-spherical in the sense of \cite[Definition~4.1.1]{DKS}.
\end{definition}

We will again use the obvious terminology of {\bf (un)bounded} and {\bf strictly} $N$-bounded objects.

\subsection{Bounded objects}

We state some properties which are immediate analogues of the corresponding observations in Section~\ref{SecAdd}. In particular, boundedness is now determined by complexes being acyclic. The following proposition will be improved in Theorem~\ref{ThmK0}.

\begin{prop}\label{PropSph2}For $N\in \mZ_{>1}$, the following conditions on $X\in\cC$ are equivalent:
\begin{enumerate}
\item $X$ is $N$-bounded;
\item 
$\mE_{N-1}(X)\simeq 0\simeq \mE^{N-1}(X)\;$ in $\;\cD(\cC)$;
\item $\mE_{N-1}(X)\simeq 0\simeq \mE_{N-1}(X^\ast)\;$ in $\;\cD(\cC)$;
\item $\mE_{N-1}(X^{(i)})\simeq 0 \;$ in $\;\cD(\cC)\;$ for all $i\in\mZ.$
\end{enumerate}
\end{prop}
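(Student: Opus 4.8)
The strategy is to mirror the proof of Proposition~\ref{PropSph}, replacing the homotopy category $\cK(\cA)$ by the derived category $\cD(\cC)$ throughout. The model $\mathrm{N}_{\mathrm{dg}}(\mathrm{Ch}^+(\Inj\cC))$ has homotopy category equivalent to $\cD(\cC)$ (since $\cC$ has enough injectives in $\Ind\cC$ and every object has finite length, one has $\mathrm{Ho}(\mathrm{N}_{\mathrm{dg}}(\mathrm{Ch}^+(\Inj\cC)))\simeq \cD^+(\cC)$, and the continuant complexes are bounded so there is no issue with the $+$ restriction). As in the proof of Proposition~\ref{PropSph}, the $N$-sphericality condition from \cite[Definition~4.1.1]{DKS} is a vanishing condition on certain exact endofunctors of this stable $\infty$-category, and these vanish if and only if they vanish on the homotopy category. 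Unwinding the construction exactly as in Section~\ref{SecAdd} (the complexes $\mE_n$ and $\mE^n$ and the triangles \eqref{DefTria} are defined the same way, now living in $\cD(\cC)$ via the embedding $\cC\hookrightarrow\cD(\cC)$), the relevant endofunctors are $-\otimes\mE_{N-1}(X)$ and $-\otimes\mE^{N-1}(X)$ on $\cD(\cC)$. These vanish if and only if $\mE_{N-1}(X)\simeq 0\simeq\mE^{N-1}(X)$ in $\cD(\cC)$, giving the equivalence of (1) and (2).

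For the equivalence of (2), (3) and (4): this is formal and follows exactly as in Proposition~\ref{PropSph}, using the identifications \eqref{eqNN}, namely $\mE^n(X)\simeq \mE_n({}^\ast X)^\ast \simeq {}^\ast\mE_n(X^\ast)$. Since taking duals is an (anti-)equivalence of $\cC$ and hence induces an (anti-)equivalence of $\cD(\cC)$ sending $0$ to $0$, the complex $\mE^{N-1}(X)$ vanishes in $\cD(\cC)$ if and only if $\mE_{N-1}(X^\ast)$ does, which gives (2)$\Leftrightarrow$(3). For (3)$\Leftrightarrow$(4), one notes that $X^{(i)}$ is obtained from $X$ by iterating left/right duals; applying (3) to each $X^{(i)}$ and using $\mE_{N-1}(X^{(2i)})\simeq \mE_{N-1}(X)^{(2i)}$ in $\cD(\cC)$ (the duality functors are exact, so they commute with passage to the derived category) shows that the vanishing for all $X^{(i)}$ is equivalent to the vanishing for $X$ and $X^\ast$ alone.

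\textbf{Main obstacle.} The one point requiring genuine care, as opposed to routine transcription from Section~\ref{SecAdd}, is the identification $\mathrm{Ho}(\mathrm{N}_{\mathrm{dg}}(\mathrm{Ch}^+(\Inj\cC)))\simeq \cD(\cC)$ together with the verification that the $N$-spherical endofunctors of \cite{DKS}, when transported to this model, really are $-\otimes\mE_{N-1}(X)$ and $-\otimes\mE^{N-1}(X)$ computed in $\cD(\cC)$: one must check that the iterative cone construction of Definition~\ref{DefComp}, performed in the dg-model of bounded-below complexes of injectives, computes the same objects as the naive construction in $\cD(\cC)$. This is where the hypotheses that $\cC$ is a tensor category (finite-length objects, so $\Ind\cC$ has enough injectives, and $-\otimes X$ is exact hence has a well-behaved derived functor — in fact $-\otimes X$ is already exact so no derivation is needed) are used. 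Since $-\otimes X$ is exact, it preserves acyclicity, so the condition "$\mE_{N-1}(X)$ vanishes in $\cD(\cC)$" is literally the condition that $\mE_{N-1}(X)$ is acyclic as a complex in $\cC$, and everything reduces cleanly to the additive computations already carried out. I do not expect this step to be hard, only to require a careful sentence or two invoking the relevant facts about injective resolutions in $\Ind\cC$ and \cite[Proposition~1.3.2.10]{Lu}.
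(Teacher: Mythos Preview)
Your proposal is correct and takes essentially the same approach as the paper: the paper in fact gives no explicit proof of this proposition, prefacing it only with the remark that these are ``immediate analogues of the corresponding observations in Section~\ref{SecAdd}'' (i.e.\ of Proposition~\ref{PropSph}), which is precisely the transcription you carry out. Your elaboration on the identification $\mathrm{Ho}(\mathrm{N}_{\mathrm{dg}}(\mathrm{Ch}^+(\Inj\cC)))\simeq\cD^+(\cC)$ and the exactness of $-\otimes X$ fills in details the paper leaves implicit.
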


\begin{example}
\begin{enumerate}
\item No objects are 1-bounded.
\item The 2-bounded objects are precisely the zero objects.
\item The 3-bounded objects are precisely the invertible objects.
\end{enumerate}
\end{example}

Invertible objects in $\cD(\cC)$ correspond to invertible objects in $\cC$ placed in an arbitrary homological degree, and left invertible objects are right invertible. Therefore, the analogue of \autoref{CorTria} can be formulated as follows.

\begin{corollary}\label{CorTria2}
If $\mE_{N-1}(X)= 0$ in $\cD(\cC)$ for $X\in\cC$ and $N\in\mZ_{>1}$, then in $\cD(\cC)$:
\begin{enumerate}
\item $\mE_{N-2}(X)\simeq\mE_N(X)[1]$;
\item For $i\in\{N,N-2\}$ and $j\in\mZ$, the objects $\mE_{i}(X^{(j)})$ are invertible;
\item For $m\in\mN$ and $j\in\mZ$ with $m+j$ odd, the objects
$\mE_{m+N}(X^{(j)})[1]$ and $\mE_{m}(X^{(j)})$ are isomorphic, up to tensor product with an invertible object;
\item For $m\in\mN$, the objects $\mE_{m+N}(X)[1]$ and $\mE_{m}(X^{(N)})$ are isomorphic, up to tensor product with an invertible object.
\end{enumerate}
\end{corollary}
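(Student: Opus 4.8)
The plan is to mirror the proof of \autoref{CorTria} inside $\cD(\cC)$, using in addition the two facts recorded just before the statement: every invertible object of $\cD(\cC)$ is a shift of an invertible object of $\cC$, and a left-invertible object of $\cD(\cC)$ is invertible. Part (1) is immediate from the defining triangle \eqref{DefTria} with $n=N$: since $\mE_{N-1}(X)\simeq 0$, the middle term $X^{(N-1)}\otimes\mE_{N-1}(X)$ vanishes, so $\mE_{N-2}(X)\simeq\mE_N(X)[1]$. For the rest I would first record the bookkeeping that drives everything: as $(-)^{(2)}=(-)^{\ast\ast}$ is a monoidal autoequivalence commuting up to isomorphism with the iterative construction of the continuant complexes, one has $\mE_{N-1}(X^{(2i)})\simeq\mE_{N-1}(X)^{(2i)}\simeq 0$ in $\cD(\cC)$ for every $i\in\mZ$, and then by \eqref{eqNN} also $\mE^{N-1}(X^{(2i+1)})\simeq{}^\ast\mE_{N-1}(X^{(2i+2)})\simeq 0$ for every $i\in\mZ$. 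In other words, $\mE_{N-1}$ vanishes on the even-indexed duals of $X$ and $\mE^{N-1}$ vanishes on the odd-indexed duals of $X$.

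For part (2), fix $j\in\mZ$ and $k\in\{N-2,N\}$. Feeding the appropriate dual $X^{(j)}$ or $X^{(j-1)}$ into \autoref{PropDKS}(1) (when the relevant index is even) or \autoref{PropDKS}(2) (when it is odd), with degree parameter $k-1$ or $k$, one obtains an exact triangle in $\cD(\cC)$ whose term not equal to $\unit$ and not involving $\mE_k(X^{(j)})$ is of the shape $\mE^{N-1}(X^{(\bullet)})\otimes\mE_{N-1}(X^{(\bullet)})$, with one of the two tensor factors an even-indexed dual of $\mE_{N-1}(X)$ or an odd-indexed dual of $\mE^{N-1}(X)$, hence zero by the previous paragraph. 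Consequently a shift of $\mE_k(X^{(j)})\otimes\mE^{N-2}(X^{(\bullet)})$ is isomorphic to $\unit$, so $\mE_k(X^{(j)})$ is left-invertible, hence invertible.

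Parts (3) and (4) are then special cases of \autoref{ThmTrian}. For part (3), apply \autoref{ThmTrian} to $X^{(j)}$ with $n=m+N$ and $l=N$: the middle term is $\mE_{N-1}(X^{(j+m+1)})\otimes\mE_{m+1}(X^{(j)})$, which vanishes because $m+j$ odd forces $j+m+1$ even; hence $\mE_{m+N}(X^{(j)})[1]\simeq\mE_{N-2}(X^{(j+m+2)})\otimes\mE_m(X^{(j)})$, and $\mE_{N-2}(X^{(j+m+2)})$ is invertible by part (2). For part (4), apply \autoref{ThmTrian} to $X$ with $n=m+N$ and $l=m+2$: the middle term is $\mE_{m+1}(X^{(N-1)})\otimes\mE_{N-1}(X)\simeq 0$, so $\mE_{m+N}(X)[1]\simeq\mE_m(X^{(N)})\otimes\mE_{N-2}(X)$, with $\mE_{N-2}(X)$ invertible by part (2).

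The main obstacle — and the precise point where the statement goes beyond the additive \autoref{CorTria}, whose parts (2)--(3) never reach the continuant complexes of an odd-indexed dual — is part (2) for odd $j$. The hypothesis $\mE_{N-1}(X)\simeq0$ only yields $\mE_{N-1}(X^{(2i)})\simeq 0$ directly, so to see that $\mE_N(X^{(j)})$ and $\mE_{N-2}(X^{(j)})$ are nonetheless invertible for odd $j$ one genuinely has to route through the dual complexes $\mE^{N-1}$ and through both halves of \autoref{PropDKS}; the only real work is the parity matching, i.e.\ choosing for each target $\mE_k(X^{(j)})$ the right instance of \autoref{PropDKS} and the right object $X^{(j)}$ or $X^{(j-1)}$ to feed into it, so that the error term always lands on a complex already known to vanish. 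No individual step is difficult once this is organized.
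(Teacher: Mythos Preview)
Your proof is correct and follows essentially the same approach as the paper, which simply presents \autoref{CorTria2} as the analogue of \autoref{CorTria} in $\cD(\cC)$, invoking that one-sided invertible objects in $\cD(\cC)$ are invertible. For part~(2) you unroll \autoref{PropDKS} separately for each parity of~$j$ (with one small imprecision: for $k=N-2$ the vanishing term has a factor $\mE^{N-3}$ or $\mE_{N-3}$ rather than $\mE^{N-1}$, though the other factor is indeed $\mE_{N-1}$ or $\mE^{N-1}$ at the needed index), whereas the paper's implicit route is marginally shorter: apply \autoref{CorTria}(2) once to get invertibility of $\mE_i(X)$ and of $\mE^i(X)\simeq\mE_i(X^{(-1)})^\ast$, then transport via the monoidal autoequivalence $(-)^{\ast\ast}$ to reach all~$j$.
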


 \begin{corollary}\label{CorNM}
Assume that $X$ is bounded, then there exists (a unique) $N$ such that $X$ is $M$-bounded if and only if $N|M$.
\end{corollary}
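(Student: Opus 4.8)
The plan is to mimic the proof of Proposition~\ref{CorMinDeg} verbatim, now in the derived category $\cD(\cC)$ rather than $\cK(\cA)$, using the derived-category analogues of the structural results that have just been assembled. Concretely, I would first invoke Proposition~\ref{PropSph2}(4): if $X$ is $N$-bounded then $\mE_{N-1}(X^{(i)})\simeq 0$ in $\cD(\cC)$ for every $i\in\mZ$. Then Corollary~\ref{CorTria2}(2) (applied with $j$ ranging over all integers) tells us that $\mE_{N-2}(X^{(i)})$ is invertible in $\cD(\cC)$ for all $i$; this replaces the ``left/right invertible'' bookkeeping of the additive case, since in $\cD(\cC)$ an invertible object is automatically invertible on both sides (as noted in the paragraph preceding Corollary~\ref{CorTria2}).

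Next I would feed this into the triangle isomorphisms of Corollary~\ref{CorTria2}(3): for $m\in\mN$ and $j\in\mZ$ with $m+j$ odd, $\mE_{m+N}(X^{(j)})[1]$ and $\mE_m(X^{(j)})$ differ only by tensoring with an invertible object. Taking $j$ to be $0$ and $1$ alternately (one of which makes $m+j$ odd for each given $m$), and using again Proposition~\ref{PropSph2}(4), we conclude: $X$ is $n$-bounded $\iff$ $\mE_{n-1}(X^{(i)})\simeq 0$ for all $i$ $\iff$ $\mE_{n-N-1}(X^{(i)})\simeq 0$ for all $i$ $\iff$ $X$ is $(n-N)$-bounded. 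Hence the set $S=\{M\in\mZ_{>0}: X\text{ is }M\text{-bounded}\}$ is stable under adding and (where the result lands in $\mZ_{>0}$) subtracting $N$ for one particular $N\in S$; combined with $S$ being nonempty by hypothesis, an elementary argument (taking $N$ minimal in $S$, noting $N>1$ since no object is $1$-bounded, and running the subtraction step down to show every element of $S$ is a multiple of $N$, while the addition step shows every multiple of $N$ lies in $S$) gives the claimed $M\in S\iff N\mid M$. Uniqueness of $N$ is then immediate, as it is forced to be $\min S$.

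The one genuine wrinkle, and the step I expect to require the most care, is the passage from ``$X$ is homotopically $N$-bounded'' machinery to the derived setting: one must be sure that all the exact triangles of Section~\ref{SecAdd} (the defining triangles \eqref{DefTria}, Proposition~\ref{ThmTrian}, Proposition~\ref{PropDKS}, hence Corollary~\ref{CorTria}) descend to $\cD(\cC)$, i.e. that Corollary~\ref{CorTria2} is genuinely available with exactly the invertibility-on-both-sides strengthening used above. Since these triangles live in $\cK(\cA)$ and the localization $\cK^+(\Inj\cC)\to\cD(\cC)$ is exact, this descent is routine, and indeed Corollary~\ref{CorTria2} has already been stated in the form needed; so once that is granted the argument is a line-for-line transcription of the proof of Proposition~\ref{CorMinDeg}. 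I would therefore write the proof simply as: ``The argument is identical to that of Proposition~\ref{CorMinDeg}, using Proposition~\ref{PropSph2}(4) and Corollary~\ref{CorTria2} in place of Proposition~\ref{PropSph}(4) and Corollary~\ref{CorTria}.''
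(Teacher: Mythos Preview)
Your proposal is correct and matches the paper's intended argument. The paper states Corollary~\ref{CorNM} without proof, precisely because it is the derived-category analogue of Proposition~\ref{CorMinDeg}: the same reasoning goes through with Proposition~\ref{PropSph2}(4) and Corollary~\ref{CorTria2} replacing Proposition~\ref{PropSph}(4) and Corollary~\ref{CorTria}, exactly as you indicate in your final sentence.
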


Objects can be (strictly) $N$-bounded for any $N>1$, already in fusion categories over $\bk=\mC$:
\begin{example}\label{exallN}
\begin{enumerate}

\item Let $q=\zeta_{2N}$ be a primitive $2N$-th root of unity, for $N>2$. The image of the natural $U_q(\mathfrak{sl}_2)$-representation in the Verlinde category $\cC(\mathfrak{sl}_2,q)$, see \cite[Example~8.18.5]{EGNO}, is strictly $N$-bounded. This follows for instance from Theorem~\ref{ThmFP} below, or from Theorem~\ref{ThmPiv1}.
\item As a special case of part (1), the generator of the Ising fusion category is strictly 4-bounded, see \cite[Proposition~B.3]{DGNO}.
\item The generator $X$ of the Tambara-Yamagami fusion category for $A=\mZ/3$, see \cite{TY}, is strictly 6-bounded. Since $X^{\otimes 2}\simeq \unit^3$, this follows from Theorem~\ref{ThmFP}.
\end{enumerate}
\end{example}

By the K\"unneth formula, it follows that when $A\otimes B=0$ for $A,B\in\cD^b(\cC)$, then $A=0$ or $B=0$. In particular, Proposition~\ref{PropDKS}(1) implies the following lemma.
\begin{lemma}\label{InvZero}
Consider $X\in\cC$ and $i\in\mZ_{>0}$. If $\mE_i(X)$ is invertible, then $\mE_{i-1}(X)=0$ or $\mE_{i+1}(X)=0$.
\end{lemma}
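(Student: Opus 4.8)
The goal is to deduce Lemma~\ref{InvZero} from Proposition~\ref{PropDKS}(1) together with the K\"unneth-type rigidity remark stated just before: if $A\otimes B=0$ in $\cD^b(\cC)$ then $A=0$ or $B=0$. The plan is to apply Proposition~\ref{PropDKS}(1) with $n=i$, but with $X$ replaced by $X^\ast$, so that the complexes $\mE_n(X^\ast)$, $\mE^{n}(X^\ast{}^\ast)$ appearing there become $\mE_n(X^\ast)$ and $\mE^n(X)$ in the present notation via the identifications \eqref{eqNN}. Concretely, Proposition~\ref{PropDKS}(1) gives an exact triangle in $\cD(\cC)$ of the form
$$\mE^{n-1}(Y^\ast)\otimes \mE_{n+1}(Y)\;\to\;\mE^n(Y^\ast)\otimes \mE_n(Y)\;\to\;\unit\;\to$$
for any object $Y$; I would choose $Y$ (either $X$ or $X^\ast$) so that the middle term involves $\mE_i(X)$ and its dual. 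Using \eqref{eqNN}, $\mE^n(Y^\ast)\simeq \mE_n(Y)^\ast$, so the middle term is $\mE_n(Y)^\ast\otimes\mE_n(Y)$.

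The key step: set $n=i$ and suppose $\mE_i(X)$ (equivalently $\mE_i(Y)$ for the appropriate choice of $Y$) is invertible in $\cD^b(\cC)$. Then $\mE_i(Y)^\ast\otimes \mE_i(Y)\simeq\unit$, and moreover the second map in the triangle, being evaluation, is an isomorphism — this is exactly the content recorded in Corollary~\ref{CorTria2}(2)/Proposition~\ref{PropDKS}, that the arrow to $\unit$ is the evaluation map, which for an invertible object is an isomorphism. Consequently the first term of the triangle is the zero object: $\mE^{i-1}(Y^\ast)\otimes \mE_{i+1}(Y)\simeq 0$ in $\cD^b(\cC)$. Now invoke the K\"unneth rigidity: one of the two tensor factors must vanish, i.e. $\mE^{i-1}(Y^\ast)\simeq 0$ or $\mE_{i+1}(Y)\simeq 0$. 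Translating back via \eqref{eqNN}, $\mE^{i-1}(Y^\ast)\simeq \mE_{i-1}(Y)^\ast$, and an object is zero iff its dual is zero, so this says $\mE_{i-1}(Y)\simeq 0$ or $\mE_{i+1}(Y)\simeq 0$, which is the claim (after undoing the substitution $Y\leftrightarrow X$, noting that $\mE_j(X^\ast)\simeq0 \iff \mE_j(X)\simeq 0$ is not needed here since we can simply take $Y=X$ throughout and read Proposition~\ref{PropDKS}(1) with $X^\ast$ in place of the bare object).

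The one subtlety to get right is bookkeeping of duals and which variant of Proposition~\ref{PropDKS} and \eqref{eqNN} to invoke so that the complexes appearing are precisely $\mE_{i-1}(X),\mE_i(X),\mE_{i+1}(X)$ and not shifted dual variants; this is purely notational and is handled by applying Proposition~\ref{PropDKS}(1) to the object $X^\ast$, for which $\mE_n((X^\ast)^\ast)=\mE_n(X)$. I do not expect any real obstacle: the argument is a direct combination of the cited exact triangle, the observation that invertibility of $\mE_i$ forces the evaluation map in that triangle to be an isomorphism (hence the third term of the triangle vanishes), and the fact that $\cD^b(\cC)$ has no zero divisors for $\otimes$. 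If one wanted to be slightly more careful about "the evaluation map is an isomorphism," one notes that for an invertible object $L$ in a rigid monoidal category the evaluation $L^\ast\otimes L\to\unit$ is always an isomorphism, and Proposition~\ref{PropDKS} explicitly identifies the second map of the triangle with evaluation.
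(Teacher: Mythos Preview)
Your proposal is correct and follows exactly the paper's approach: apply Proposition~\ref{PropDKS}(1) with $n=i$ (directly for $X$, no substitution needed), use invertibility of $\mE_i(X)$ together with the identification $\mE^i(X^\ast)\simeq\mE_i(X)^\ast$ from~\eqref{eqNN} to see that the evaluation arrow is an isomorphism, conclude the first term $\mE^{i-1}(X^\ast)\otimes\mE_{i+1}(X)$ vanishes, and then invoke the K\"unneth remark to split the vanishing. Your hesitation about whether to substitute $X^\ast$ is unnecessary---Proposition~\ref{PropDKS}(1) as written already places $\mE_{i+1}(X)$ and $\mE^{i-1}(X^\ast)\simeq\mE_{i-1}(X)^\ast$ exactly where you need them.
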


Also the following observation is immediate, as tensor functors are faithful and exact.
\begin{lemma}\label{TensorF}
Consider a tensor functor $F:\cC\to\cC'$ between tensor categories and an object $X\in\cC$. Then $X$ is $N$-bounded if and only if $F(X)$ is $N$-bounded.
\end{lemma}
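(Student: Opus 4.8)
The plan is to deduce Lemma~\ref{TensorF} directly from the characterisation of $N$-boundedness in Proposition~\ref{PropSph2}(2), namely that $X$ is $N$-bounded if and only if $\mE_{N-1}(X)\simeq 0\simeq\mE^{N-1}(X)$ in $\cD(\cC)$, together with the fact that a tensor functor $F$ is exact and faithful (hence in particular detects and preserves acyclicity of complexes). First I would observe that a tensor functor commutes, up to natural isomorphism, with the formation of left and right duals, evaluations and coevaluations; this is standard for monoidal functors between rigid categories and is recorded in \cite{EGNO}. Consequently $F$ commutes with the iterative construction of the continuant complexes in Definition~\ref{DefComp}: since $F$ is additive and monoidal it sends mapping cones to mapping cones (at the level of chain complexes) and intertwines the adjunction morphisms $f_n$, so $F(\mE_n(X))\simeq\mE_n(F(X))$ in $\Ch(\cC')$, and likewise $F(\mE^n(X))\simeq\mE^n(F(X))$ using~\eqref{eqNN}.

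Next I would use that an exact $\bk$-linear functor extends to the derived categories and that a faithful exact functor between abelian categories reflects exactness of sequences; in particular, for a bounded complex $C$ in $\cC$, $C\simeq 0$ in $\cD(\cC)$ if and only if $F(C)\simeq 0$ in $\cD(\cC')$. The forward implication is just exactness of $F$; the backward implication is faithfulness, since $F$ applied to the (nonzero) homology objects of $C$ would be nonzero. Applying this to $C=\mE_{N-1}(X)$ and $C=\mE^{N-1}(X)$ and combining with $F(\mE_{N-1}(X))\simeq\mE_{N-1}(F(X))$ and $F(\mE^{N-1}(X))\simeq\mE^{N-1}(F(X))$ yields that $\mE_{N-1}(X)\simeq 0\simeq\mE^{N-1}(X)$ in $\cD(\cC)$ if and only if $\mE_{N-1}(F(X))\simeq 0\simeq\mE^{N-1}(F(X))$ in $\cD(\cC')$, which is exactly the equivalence of $N$-boundedness of $X$ and of $F(X)$ by Proposition~\ref{PropSph2}.

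The main point requiring (mild) care — and the only place where there is anything to check — is the compatibility $F(\mE_n(X))\simeq\mE_n(F(X))$. Here one should note that the monoidal structure constraints of $F$ produce canonical isomorphisms $F(X^{(i)})\simeq F(X)^{(i)}$ for all iterated duals, and that these are compatible with the evaluation morphisms $\ev^I_J$ appearing in the explicit description of $\mE_n$ in Remark~\ref{RemExp}; the cone construction is then functorial on the nose at the chain level. I expect no real obstacle: the whole statement is a formal consequence of rigidity, exactness and faithfulness, and the proof is essentially the one indicated in the excerpt (``as tensor functors are faithful and exact''). One could phrase the entire argument in a single sentence, but spelling out the compatibility with duals and with the continuant complexes makes it self-contained.
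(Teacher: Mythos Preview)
Your proposal is correct and is precisely the argument the paper has in mind: the paper's proof is the single clause ``as tensor functors are faithful and exact'', and you have simply unpacked this by noting that $F$ commutes with the continuant complexes (via compatibility with duals and cones) and that a faithful exact functor preserves and reflects acyclicity. There is no difference in approach, only in the level of detail.
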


\begin{remark}
If $X$ is pivotal so that $\bR(X)$ is $\cJW_{N-1}$ and $\cJW_{N-1}'$, then  (by Theorem~\ref{ThmPiv1}) $X$ is $N$-bounded if and only if it is homotopically $N$-bounded. In general, there are $N$-bounded objects in tensor categories which are not homotopically $N$-bounded, see \autoref{PropYang} below.
\end{remark}

\subsection{Alternative characterisations}

A remarkable feature of boundedness in tensor categories is that it can be detected in the Grothendieck ring, and by only one of the continuant complexes.

\begin{theorem}\label{ThmK0}
The following properties are equivalent on $X\in\cC$ and $N\in\mZ_{>1}$:
\begin{enumerate}
\item $X$ is $N$-bounded;
\item $\mE_{N-1}(X)=0$ in $\cD(\cC)$;
\item $[\mE_{N-1}(X)]=0$ in $K_0(\cC)$;
\item
\begin{enumerate}
\item $\mE_{N-2}(X)$ and $\mE_N(X)$ are invertible;
%\item If $N$ is odd, then $X\not=0$;
\item If $X$ is strictly $4$-bounded, then $N$ is a multiple of $4$.
\end{enumerate}
\end{enumerate}
\end{theorem}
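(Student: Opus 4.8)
The plan is to establish the cycle of implications $(1)\Rightarrow(2)\Rightarrow(3)\Rightarrow(4)\Rightarrow(1)$, where the only genuinely new content lies in $(3)\Rightarrow(4)$ and $(4)\Rightarrow(1)$; the implications $(1)\Leftrightarrow(2)$ for the $\mE_{N-1}(X)$ half already follow from \autoref{PropSph2}, and $(2)\Rightarrow(3)$ is trivial since passing to $K_0(\cC)=K_0(\cD^b(\cC))$ kills anything that is zero in $\cD(\cC)$. So the real work is: (a) upgrade the $K_0$-vanishing $[\mE_{N-1}(X)]=0$ to the derived vanishing $\mE_{N-1}(X)\simeq 0$ and the structural statements in (4), and (b) close the loop by showing (4) implies full $N$-boundedness, i.e.\ that $\mE^{N-1}(X)$ also vanishes.

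First I would analyse $(3)\Rightarrow(4)$. The key input is that $\mE_{N-1}(X)$ is a bounded complex whose class in $K_0(\cC)$ is, by Remark~\ref{RemCont}, the continuant $E_{N-1}([X^{(N-2)}],\dots,[X])$. I would use the defining triangles \eqref{DefTria} together with the observation that in $\cD^b(\cC)$ a bounded complex that vanishes in $K_0$ has a two-step "resolution": more precisely, from the triangle $\mE_{N-1}(X)\to X^{(N-2)}\otimes\mE_{N-2}(X)\to\mE_{N-3}(X)\to$ and vanishing of the Euler characteristic I would extract that $X^{(N-2)}\otimes\mE_{N-2}(X)$ and $\mE_{N-3}(X)$ have equal classes in $K_0$. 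Running this recursively, combined with the fact (to be proven, or quoted from \autoref{PropDKS}(1) and the K\"unneth lemma \autoref{InvZero}) that the $\mE_i(X)$ behave multiplicatively, should force $\mE_{N-2}(X)$ and $\mE_N(X)$ to have classes in $K_0$ that are units for the multiplication induced by $\mE^\bullet$-pairing; the cleanest route is to feed $[\mE_{N-1}(X)]=0$ into the $K_0$-shadow of \autoref{PropDKS}(1), which gives $[\mE^{N-2}(X^\ast)]\cdot[\mE_N(X)] + [\mE^{N-1}(X^\ast)]\cdot[\mE_{N-1}(X)] = [\unit]$, so modulo the vanishing of $[\mE_{N-1}(X)]$ we learn $[\mE_N(X)]$ divides $[\unit]$ in $K_0$, and an object whose class divides $[\unit]$ and which is an actual object of $\cD^b(\cC)$ is invertible (its dual pairing to $\unit$ is an iso after checking it is nonzero, which K\"unneth guarantees). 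The same argument with $N-1$ in place of $N$ applied to $X$ handles $\mE_{N-2}(X)$. Part (4)(b) is then the elementary periodicity observation: if $X$ were strictly $4$-bounded, then by \autoref{CorNM} the set of $M$ for which $X$ is $M$-bounded is exactly the multiples of $4$, so any $N$ with $\mE_{N-1}(X)=0$ must be such a multiple; I would need the mild point that $[\mE_{N-1}(X)]=0$ already implies $\mE_{N-1}(X)=0$ in $\cD(\cC)$, which is precisely what $(3)\Rightarrow(2)$ is supposed to deliver, so I would prove $(3)\Rightarrow(2)$ first as a lemma and reuse it here.

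Thus the architecture I would actually write is: prove $(3)\Rightarrow(2)$ directly (a bounded complex with invertible outer continuants and zero $K_0$-class is acyclic — use the triangles \eqref{DefTria} and induction on the number of nonzero homology objects, exactly as the homotopy-category arguments but now with the luxury of working in an abelian category where "zero in $K_0$" plus "bounded" is strong); then $(2)\Rightarrow(4)$ via \autoref{CorTria2} and \autoref{PropDKS}(1) as above; then the decisive $(4)\Rightarrow(1)$: given that $\mE_{N-2}(X)$ and $\mE_N(X)$ are invertible, the defining triangle forces $\mE_{N-1}(X)\simeq 0$ directly (an extension of one invertible shift by another invertible object that has zero Euler class must be the zero object — again K\"unneth plus counting), and then to get $N$-boundedness from \autoref{PropSph2} I must also kill $\mE^{N-1}(X)$; here I invoke the duality \eqref{eqNN}, $\mE^{N-1}(X)\simeq \mE_{N-1}({}^\ast X)^\ast$, and observe that $\mE_{N-2}({}^\ast X)$ and $\mE_N({}^\ast X)$ are likewise invertible — this follows because invertibility of the continuants is stable under replacing $X$ by any $X^{(j)}$ (run the whole $(3)\Rightarrow(4)$ machine for $X^\ast$, which is legitimate since $[\mE_{N-1}(X^\ast)]$ equals the reversed continuant and hence also vanishes once we know $\mE_{N-1}(X)=0$, by \autoref{CorTria2} or by the symmetry of continuant polynomials). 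The hard part, and the step I expect to fight with, is making the $K_0$-to-$\cD(\cC)$ upgrade fully rigorous: "a bounded complex with zero class in $K_0$ and invertible outer terms in its continuant tower is acyclic" needs the right induction scheme so that the intermediate continuant complexes are controlled, and I would likely need to phrase it as: if $\mE_{N-1}(X)\neq 0$ in $\cD^b(\cC)$ then it has a nonzero homology object in some top degree, which via \eqref{DefTria} propagates to a genuine nonzero class, contradicting (3) — the bookkeeping of which degree the nonvanishing lives in, and ensuring no cancellation in $K_0$ can occur because the relevant homology sits in a single degree, is the delicate point.
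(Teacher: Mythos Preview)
Your proposal identifies the right architecture but misses the key technical lemma that makes everything work. The paper first proves (\autoref{ThmConjP}) that for every $X$, the complexes $\mE_i(X)$ are quasi-isomorphic to $\mV_i(X)[0]$ for all $i$ up to the first index where $\mE_i(X)$ vanishes. This single-degree concentration is exactly the ``delicate point'' you flag at the end but never resolve. The argument is short and uses \autoref{PropDKS}(1) in a way you do not anticipate: if $\mE_{m-1}(X)\simeq B[0]$ and $\mE_m(X)\simeq A[0]$ with $A,B\neq 0$, then the evaluation triangle becomes
\[
B^\ast\otimes\mE_{m+1}(X)\;\to\; A^\ast\otimes A[0]\;\xrightarrow{\ev_A}\;\unit[0]\;\to,
\]
and since $\ev_A$ is an epimorphism in $\cC$, the first term is $\ker(\ev_A)[0]$; hence $\mE_{m+1}(X)$ is again concentrated in degree $0$. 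With this in hand, $(3)\Leftrightarrow(2)$ is immediate, and your attempted $K_0$-divisibility argument for invertibility is unnecessary.

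Two further steps in your outline do not work as written. First, your passage from $\mE_{N-1}(X)=0$ to $\mE^{N-1}(X)=0$ via ``symmetry of continuant polynomials'' fails: although $E_n$ is palindromic, the classes entering $[\mE_{N-1}(X)]$ and $[\mE_{N-1}({}^\ast X)]$ are $[X^{(j)}]$ with \emph{different} ranges of $j$, so one vanishing does not formally imply the other. The paper instead works at the minimal $N_0$ with $\mE_{N_0-1}(X)=0$: \autoref{CorTria2}(2) makes $\mE_{N_0-2}(X^{(j)})$ invertible for all $j$, and \autoref{InvZero} applied to $X^\ast$ then forces $\mE^{N_0-1}(X)=0$ or $\mE^{N_0-3}(X)=0$, the latter contradicting minimality after iteration. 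Second, your $(4)\Rightarrow(1)$ invokes a ``zero Euler class'' that condition (4) does not supply; the paper instead applies \autoref{InvZero} to each of $\mE_{N-2}(X)$ and $\mE_N(X)$ to conclude that either $\mE_{N-1}(X)=0$, or $X$ is both $(N-2)$- and $(N+2)$-bounded and hence strictly $2$- or $4$-bounded, with (4)(b) eliminating the $4$-bounded case.
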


The remainder of this section is devoted to the proof of the theorem.
\begin{remark}\label{RemAst}
\begin{enumerate}
\item By symmetry, we can add $\mE^{N-1}(X)=0$ and $[\mE^{N-1}(X)]=0$ to the list of equivalent conditions in Theorem~\ref{ThmK0}.
\item The proofs (or more concretely, the combination of Theorems~\ref{ThmK0} and~\ref{ThmConjP} and Corollary~\ref{CorTria2}(3)) actually show that the homology of $\mE_{n}(X)$ is always contained in one degree, explaining equivalence between \ref{ThmK0}(2) and~(3).
\end{enumerate}

\end{remark}
 For convenience we label the following objects in $\cC$, for $i\in\mN$:
$$\mV_i(X)\,:=\, H_0(\mE_i(X)),\quad \mV^i(X)\,:=\,H_0(\mE^i(X))\,\simeq\,\mV_i({}^\ast X)^\ast.$$

\begin{theorem}\label{ThmConjP}
For every $X\in \cC$, there exists $n\in\mZ_{>0}\cup\{\infty\}$ so that:
\begin{enumerate}
\item For all $0\le i<n$, we have $\mE_i(X)\simeq \mV_i(X)[0]\not=0$ in $\cD(\cC)$;
\item We have $\mE_n(X)= 0$ in $\cD(\cC)$ (assuming $n<\infty$);
\item We have $\mV_i(X)=0$ for all $i\ge n$.
\end{enumerate}
\end{theorem}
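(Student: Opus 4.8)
The plan is to produce the integer $n$ as the smallest value for which $\mE_{n}(X)$ vanishes in $\cD(\cC)$, setting $n=\infty$ if no such value exists, and then to bootstrap statements (1) and (3) from the exact triangles of Section~\ref{SecAdd} together with the Grothendieck-ring characterisation in Theorem~\ref{ThmK0}. The essential input is the equivalence of \ref{ThmK0}(2) and (3): vanishing of $\mE_{i}(X)$ in $\cD(\cC)$ is detected already in $K_0(\cC)$. Combined with Remark~\ref{RemCont}, which identifies $[\mE_i(X)]$ with the evaluation of a continuant polynomial on the classes $[X^{(i-1)}],\dots,[X]$, this should let us argue entirely in the ring $K_0(\cC)$ once we know how the homology is distributed across degrees.

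First I would establish that, for every $i$, the complex $\mE_i(X)$ has homology concentrated in a single homological degree, so that $\mE_i(X)\simeq \mV_i(X)[k_i]$ in $\cD(\cC)$ for some $k_i$; in fact one expects $k_i=0$, i.e. the complex is quasi-isomorphic to $\mV_i(X)[0]$. This is the crux and I expect it to be the main obstacle. The idea is an induction on $i$ using the defining triangle \eqref{DefTria}
$$\mE_i(X)\;\xrightarrow{\phi_i}\; X^{(i-1)}\otimes \mE_{i-1}(X)\;\xrightarrow{f_{i-1}}\; \mE_{i-2}(X)\;\to\;,$$
together with the triangles of Proposition~\ref{PropDKS} and Proposition~\ref{ThmTrian}. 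Knowing $\mE_{i-1}(X)\simeq\mV_{i-1}(X)[0]$ and $\mE_{i-2}(X)\simeq\mV_{i-2}(X)[0]$, the long exact homology sequence of \eqref{DefTria} forces $\mE_i(X)$ to have homology only in degrees $0$ and $1$, and $H_1(\mE_i(X))$ is a subobject of $\mV_{i-2}(X)$. One then has to rule out the degree-$1$ part. Here I would use Proposition~\ref{PropDKS}(1), which gives a triangle with $\mE^{i-1}(X^\ast)\otimes\mE_{i+1}(X)$, $\mE^i(X^\ast)\otimes\mE_i(X)$ and $\unit$; since the homology of all the $\mE_j$ is (inductively) concentrated in degree $0$ and the outer term $\unit$ sits in degree $0$, a K\"unneth/degree count pins down where the homology of $\mE_i(X)$ can live, and the fact that the map to $\unit$ is the (nonzero) evaluation excludes a stray $H_1$. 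Alternatively, and perhaps more cleanly, one can invoke Theorem~\ref{ThmK0}(4)(a): once $\mE_{n}(X)=0$, the neighbouring complexes $\mE_{n\pm1}$ are invertible, hence concentrated in one degree, and Corollary~\ref{CorTria2}(3) propagates single-degree concentration to all higher $\mE_{m}$; for the indices below $n$ one argues directly that they cannot be invertible or zero, so by the same triangle analysis they are concentrated in degree $0$ and nonzero. This interplay — using \ref{ThmK0} both to define $n$ and to control concentration — is the delicate bookkeeping.

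With single-degree concentration in hand, the three assertions follow quickly. For (2): $n$ was defined precisely as the smallest index with $\mE_n(X)=0$ in $\cD(\cC)$ (equivalently, by Theorem~\ref{ThmK0}, with $[\mE_n(X)]=0$ in $K_0(\cC)$), so this is immediate when $n<\infty$. For (1): for $i<n$ we have $[\mE_i(X)]\neq 0$ in $K_0(\cC)$ by minimality of $n$, hence $\mE_i(X)\neq 0$ in $\cD(\cC)$; combined with the concentration result, $\mE_i(X)\simeq\mV_i(X)[0]$ with $\mV_i(X)\neq 0$. For (3): once $\mE_n(X)=0$, Corollary~\ref{CorTria2}(1) gives $\mE_{n-2}(X)\simeq\mE_n(X)[1]$... wait, that is not what we want; rather, we want $\mE_i(X)=0$ for $i\geq n$. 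This comes from Corollary~\ref{CorTria2}(3): with $X$ being $N$-bounded for $N=n$ (Proposition~\ref{PropSph2}, using that $\mE^{n-1}(X)$ also vanishes by Remark~\ref{RemAst}(1) or by symmetry of condition \ref{PropSph2}(3)), the isomorphisms there express $\mE_{m+n}(X^{(j)})[1]$ as a tensor product of an invertible object with $\mE_m(X^{(j)})$ or with a term like $\mE_{n-2}(X^{(m+2)})\otimes\mE_m(X)$; feeding in the vanishing just below and the behaviour of $\mV$ under duals, an induction on $m$ shows $\mV_i(X)=0$ for all $i\geq n$. The one subtlety is treating $i=n$ and $i=n+1$ as base cases and then stepping by the period; $i=n$ is (2), and $i=n-1$ being invertible (Theorem~\ref{ThmK0}(4)(a)) together with Lemma~\ref{InvZero} handles the immediate neighbours.

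\textbf{Expected main obstacle.} The genuinely nontrivial point is proving that $\mE_i(X)$ is concentrated in a single homological degree for every $i$ — this is exactly the content flagged in Remark~\ref{RemAst}(2) as being explained by the proofs. Everything else is assembly: minimality of $n$ gives nonvanishing below $n$, Theorem~\ref{ThmK0} converts between $\cD(\cC)$ and $K_0(\cC)$, and the triangles of Corollary~\ref{CorTria2} propagate vanishing upward. I would therefore devote the bulk of the argument to a careful induction establishing the concentration statement, probably phrased as: \emph{for all $i$, either $\mE_i(X)$ is invertible (in one degree) or $\mE_i(X)\simeq\mV_i(X)[0]$, and the latter holds whenever $0\le i<n$}, with the inductive step powered by the long exact sequences of \eqref{DefTria} and Proposition~\ref{PropDKS}(1).
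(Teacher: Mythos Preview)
Your proposal has a \textbf{circular dependency}: you repeatedly invoke Theorem~\ref{ThmK0} (for the $K_0$-characterisation, for condition (4)(a), and via Remark~\ref{RemAst}), but in the paper the proof of Theorem~\ref{ThmK0} \emph{uses} Theorem~\ref{ThmConjP}. Indeed, the very first sentence of the proof of Theorem~\ref{ThmK0} reads ``It follows from Theorem~\ref{ThmConjP} that\ldots''. So none of the $K_0$-detection, the invertibility of $\mE_{N\pm 1}$, or Remark~\ref{RemAst} is available to you here.

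You do have the correct non-circular ingredient buried in your text: Proposition~\ref{PropDKS}(1). The paper's argument for (1)--(2) is much cleaner than what you sketch. Assume inductively that $\mE_{m-1}(X)\simeq B[0]$ and $\mE_m(X)\simeq A[0]$ with $A,B\neq 0$. Since $\mE^{m-1}(X^\ast)\simeq B^\ast[0]$ and $\mE^m(X^\ast)\simeq A^\ast[0]$ by \eqref{eqNN}, Proposition~\ref{PropDKS}(1) gives in $\cD(\cC)$ the triangle
\[
B^\ast\otimes \mE_{m+1}(X)\;\to\; A^\ast\otimes A[0]\;\xrightarrow{\ev_A}\;\unit[0]\;\to\;.
\]
As $A\neq 0$ and $\unit$ is simple, $\ev_A$ is surjective, so $B^\ast\otimes\mE_{m+1}(X)\simeq\ker(\ev_A)[0]$. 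Since $B\neq 0$ and the tensor product in a tensor category is biexact and kills no nonzero objects, $\mE_{m+1}(X)$ is itself concentrated in degree~$0$. This single step replaces all of your ``delicate bookkeeping'' and needs nothing from Theorem~\ref{ThmK0}.

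For part~(3), your route through Corollary~\ref{CorTria2} and invertibility is both overcomplicated and partially circular (you invoke \ref{ThmK0}(4)(a) again), and along the way you briefly confuse $\mE_i(X)=0$ with $\mV_i(X)=0$ --- the former is false for $i>n$. The paper's argument is one line: since $\mE_i(X)$ lives in non-positive degrees, the long exact sequence of the defining triangle \eqref{DefTria} at $H_0$ gives an injection
\[
\mV_i(X)=H_0(\mE_i(X))\;\hookrightarrow\; X^{(i-1)}\otimes H_0(\mE_{i-1}(X))=X^{(i-1)}\otimes\mV_{i-1}(X),
\]
so $\mV_{i-1}(X)=0\Rightarrow\mV_i(X)=0$, and the base case $\mV_n(X)=0$ is part~(2).
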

\begin{proof}
By definition $\mE_0(X)=\unit[0]$ and $\mE_1(X)=X[0]$, hence parts (1) and (2) follow by iteration from the following observation:
 Assume that $\mE_m(X)$ and $\mE_{m-1}(X)$ are quasi-isomorphic to $A[0]$ and $B[0]$ for non-zero $A,B\in\cC$; then $\mE_{m+1}(X)$ is also quasi-isomorphic to a complex concentrated in degree $0$. This observation is a consequence of Proposition~\ref{PropDKS}(1). Concretely, the latter implies that we have an exact triangle in $\cD(\cC)$:
 $$B^\ast \otimes\mE_{m+1}(X)\to A^\ast \otimes A [0]\xrightarrow{\ev_A}\unit[0]\to.$$
 Hence,  $B^\ast \otimes\mE_{m+1}(X)$ is quasi-isomorphic to the kernel of $\ev_A$, placed in degree 0.
 
 Part (3) follows from the observation that, since the complexes $\mE_i(X)$ are by construction contained in non-positive homological degrees, equation~\eqref{DefTria} yields an inclusion
 $$H_0(\mE_{i}(X))\;\hookrightarrow\; X^{(i-1)}\otimes H_0(\mE_{i-1}(X)).$$
In particular, $\mV_i(X)=0$ implies $\mV_{i+1}(X)=0$, concluding the proof. 
\end{proof}

%\begin{corollary}
%For every $X\in \cC$ and $i\in\mN$, the homology $H_\ast(\mE_i(X))$ is contained in one degree.
%\end{corollary}
%\begin{proof}
%If the $n$ in Theorem~\ref{ThmConjP} is $\infty$, the claim is implied by Theorem~\ref{ThmConjP}(1). If $n<\infty$, the claim 
%\end{proof}

\begin{proof}[Proof of Theorem~\ref{ThmK0}]

Let $N_0$ be the minimal $N\in\mN$ (assuming it exists) for which $\mE_{N-1}(X)=0$ in $\cD(\cC)$. It follows from Theorem~\ref{ThmConjP} that this is also the minimal $N$ for which $[\mE_{N-1}(X)]=0$. By Corollary~\ref{CorTria2}(2), it follows that $\mE_{N_0-2}(X^{(j)})$ is invertible for all $j\in\mZ$. From Lemma~\ref{InvZero}, applied to $X^\ast$, it then follows that either $\mE^{N_0-1}(X)=0$ or $\mE^{N_0-3}(X)=0$. If the latter would be the case, we can repeat the argument to find that either $\mE_{N_0-3}(X)=0$ or $\mE_{N_0-5}(X)=0$, contradicting minimality of $N_0$. Hence, $\mE^{N_0-1}(X)=0$. Furthermore, by symmetry, or the above reasoning, $N_0$ is also the minimal $N$ for which $\mE^{N-1}(X)=0$ and also the minimal $N$ for which $[\mE^{N-1}(X)]=0$.

Subsequently, it follows from Corollary~\ref{CorTria2}(3) that $\mE_n(X^{(j)})[1]$ and $\mE_{n-N_0}(X^{(j)})$ are isomorphic in $\cD(\cC)$ up to tensor product with an invertible object, for all $n\ge N_0$ and $j\in\mZ$. Hence $[\mE_n(X^{(j)})]$ and $[\mE_{n-N_0}(X^{(j)})]$ are also equal in $K_0(\cC)$, up to multiplication with a unit. It thus follows easily that all $N$ for which one (or all) of $\mE_{N-1}(X)$, $\mE^{N-1}(X)$, $[\mE_{N-1}(X)]$, $[\mE^{N-1}(X)]$ are zero are simply the multiples of $N_0$. This implies equivalence of (1), (2) and~(3).

Next, we prove that (1) implies (4). 
 Condition (4)(a) is already established in Corollary~\ref{CorTria2}(2). 
Condition 4(b) follows Corollary~\ref{CorNM}.

Finally, we prove that (4) implies (1).
 The conditions in (4)(a) imply, by Lemma~\ref{InvZero}, that either $\mE_{N-1}(X)=0$ or alternatively $\mE_{N-3}(X)=0=\mE_{N+1}(X)$. As we already know that (2) implies (1), we find that $X$ is either $N$-bounded, or $N\pm 2$-bounded. If the former is true we are done, so assume that $X$ is $N\pm 2$-bounded. We can now conclude by Corollary~\ref{CorNM}. Indeed, $X$ is strictly $2$-bounded or $4$-bounded. However, $X$ is not 4-bounded by condition~(4)(b). So $X=0$, which implies $\mE_{N-1}(X)=0$ as desired. 
\end{proof}

\subsection{Simplicity}

\begin{theorem}\label{ThmSimp}
If $X\in\cC$ is bounded, then $X$ is simple or zero.
\end{theorem}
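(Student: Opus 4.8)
The plan is to reduce the statement to a computation in the Grothendieck ring using Theorem~\ref{ThmK0}, specifically the equivalence of $N$-boundedness with $[\mE_{N-1}(X)]=0$ in $K_0(\cC)$. Suppose $X$ is $N$-bounded but not zero; I want to show $X$ is simple. First I would record the consequence of Theorem~\ref{ThmK0}(4)(a) that $\mE_{N-2}(X)$ and $\mE_N(X)$ are invertible in $\cD(\cC)$, hence their classes in $K_0(\cC)$ are classes of invertible objects (up to sign); combined with Remark~\ref{RemCont}, the classes $[\mE_i(X)]$ in $K_0(\cC)$ satisfy the continuant recursion, and in particular $[X]$ satisfies a monic integer polynomial relation coming from $\kappa_{N-1}([X])=[\mE_{N-1}(X)]=0$ together with the invertibility of the neighbouring terms. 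The upshot is that $[X]$ is an algebraic integer all of whose ``conjugates'' (images under ring homomorphisms $K_0(\cC)\to\mC$ built from characters) have absolute value bounded by $2\cos(\pi/N)<2$.

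The key mechanism is the following: write $X = \bigoplus_j X_j$ (or rather, $[X]=\sum_j m_j[S_j]$ as a sum of simples with multiplicities) and suppose for contradiction that $X$ is not simple, so this sum has at least two terms counted with multiplicity. Then $[X^\ast\otimes X]$ contains $[\unit]$ with multiplicity $=\dim\Hom(X,X)\ge 2$ if $X$ is decomposable, or more generally $\ell(X)\ge 2$ forces the ``length'' of $X^{\otimes i}$ to grow. The clean route is via the growth dimension / Frobenius--Perron-type estimates: from $[\mE_{N-1}(X)]=0$ and $[\mE_N(X)]$ invertible one extracts that $[X^\ast\otimes X]$ has bounded powers in an appropriate sense, forcing $\gd(X^\ast\otimes X)\le 4\cos^2(\pi/N) < 4$, hence $\gd(X)<2$ by \eqref{DefDX}. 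But for any object $Y$ of a tensor category, $\ell(Y)\le \gd(Y)$ is false in general; instead one uses that $\dim\End(X)\le \gd(X^\ast\otimes X)$ type bounds (the unit multiplicity in $X^\ast\otimes X$ is at most its length, which is at most $\gd$), so $\gd(X^\ast\otimes X)<4$ combined with $\dim\End(X)\ge 1$ and the structure of $\mE_{N-1}$ forces $\dim\End(X)=1$, i.e. $X$ is indecomposable; a parallel argument on the head/socle or on $\ell(X)$ directly via the inclusion $\mV_{N-1}(X)\hookrightarrow (X^\ast\otimes X)^{\otimes\lfloor (N-1)/2\rfloor}\otimes(\cdots)$ being forced to vanish shows $\ell(X)=1$.

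Concretely, the cleanest argument I would carry out: by Theorem~\ref{ThmConjP} the homology $\mV_i(X)=H_0(\mE_i(X))$ is nonzero for $i<N$ and the inclusion $\mV_i(X)\hookrightarrow X^{(i-1)}\otimes\mV_{i-1}(X)$ from the proof of that theorem gives $[\mV_i(X)]\le [X^\ast]\cdot[X]\cdot[X^\ast]\cdots$ in the sense of effectivity in $K_0$. Meanwhile $[\mV_{N-1}(X)]=[\mE_{N-1}(X)]=0$ but $[\mV_{N-2}(X)]\ne 0$, and $[\mV_{N-2}(X)]$, $[\mV_N(X)]$ are classes of invertible objects. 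Reading the triangle \eqref{DefTria} at $i=N-1$ and $i=N$ in $K_0$: $[\mV_{N-2}(X)] = [X^{(N-2)}][\mV_{N-1}(X)] - (\text{sign})[\mV_{N-3}(X)]$ — wait, rather I would use $0=[\mE_{N-1}(X)] = [X^{(N-2)}][\mE_{N-2}(X)] - [\mE_{N-3}(X)]$, so $[\mE_{N-3}(X)] = [X^{(N-2)}]\cdot(\text{invertible class})$, which means $[\mE_{N-3}(X)]$ is the class of a \emph{single simple} object times a unit. Descending, $[\mE_{N-2-2k}(X)]$ is the class of a simple (up to a unit) for all relevant $k$; taking $k$ as large as possible lands on $[\mE_1(X)]=[X]$ or $[\mE_0(X)]=[\unit]$ depending on parity of $N$, forcing $[X]$ to be a single simple class, hence $X$ simple. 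The main obstacle is handling the parity cases and justifying that ``a class which is (unit)$\cdot$(simple) in $K_0$'' together with effectivity of the intermediate continuant classes genuinely forces the object to be simple rather than merely to have simple class — one must invoke that $\mE_i(X)$ has a genuine filtration with the continuant structure (not just a $K_0$ identity), using the exact triangles and the fact, from Theorem~\ref{ThmConjP}, that each $\mE_i(X)$ is concentrated in degree $0$ so these are honest short exact sequences in $\cC$; then $X = \mE_1(X)$ sits inside an iterated extension/tensor chain pinned between two invertible objects, which pins down $\ell(X)=1$.
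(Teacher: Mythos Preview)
Your proposal circles around the right ingredients but does not close the argument, and the ``cleanest'' route you outline is circular. From $[\mE_{N-1}(X)]=0$ and the continuant recursion you correctly deduce $[\mE_{N-3}(X)]=[X^{(N-2)}]\cdot[\mE_{N-2}(X)]$ with $[\mE_{N-2}(X)]$ a unit; but you then assert this is ``the class of a single simple object times a unit''. That is exactly the statement that $X^{(N-2)}$ is simple, equivalently that $X$ is simple, which is what you are trying to prove. The subsequent descent (``$[\mE_{N-2-2k}(X)]$ is simple up to a unit for all $k$'') inherits the same circularity, and the closing sentence about $X$ being ``pinned between two invertible objects'' is not an argument.

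The growth-dimension route is also incomplete here: the bound $\bD(X)<2$ is only established later (Theorem~\ref{ThmDX}) and only for pivotal $X$, so it cannot be invoked for arbitrary $X$ at this point in the paper. (Incidentally, your parenthetical ``$\ell(Y)\le\gd(Y)$ is false in general'' is itself mistaken: in a tensor category one has $\ell(A\otimes B)\ge\ell(A)\ell(B)$, since tensoring with a nonzero object is exact and faithful, hence $\gd(Y)\ge\ell(Y)$; but this does not rescue the argument without the pivotal hypothesis.)

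What the paper does instead is elementary and self-contained. Assume $X$ is strictly $N$-bounded with $N>2$ and not simple, so $\ell(X^{(i)})=\ell(X)\ge 2$ for all $i$. By Theorem~\ref{ThmConjP} each $\mE_k(X)$ for $k<N$ is concentrated in degree $0$, so triangle~\eqref{DefTria} gives honest short exact sequences
\[
0\to\mV_k\to X^{(k-1)}\otimes\mV_{k-1}\to\mV_{k-2}\to 0,
\]
whence $\ell(\mV_k)+\ell(\mV_{k-2})=\ell(X^{(k-1)}\otimes\mV_{k-1})\ge 2\,\ell(\mV_{k-1})$. This says the sequence $k\mapsto\ell(\mV_k)$ is convex, so the differences $\ell(\mV_k)-\ell(\mV_{k-1})$ are nondecreasing. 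Comparing the last difference with the first,
\[
\ell(\mV_{N-1})-\ell(\mV_{N-2})\;\ge\;\ell(\mV_1)-\ell(\mV_0)\;=\;\ell(X)-1\;\ge\;1,
\]
while the left side equals $-\ell(\mV_{N-2})\le -1$ since $\mV_{N-1}=0$ and $\mV_{N-2}\ne 0$. That is the missing idea: a one-line convexity inequality on lengths, not a descent in $K_0$.
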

\begin{proof}
Let $X$ be strictly $N$-bounded, for $N>2$. Set $\mV_k=\mV_k(X)$.
 By Theorems~\ref{ThmK0} and~\ref{ThmConjP}, for $k<N$ we have $\mV_k[0]\simeq \mE_k(X)$, and $\mV_k=0$ if and only if $k=N-1$. Assume for a contradiction that $X$ is not simple. Then triangle~\eqref{DefTria} implies that
 $$\ell(\mV_k)+\ell(\mV_{k-2})\;\ge\; 2\ell(\mV_{k-1}),$$
 for all $k<N$. By iteration on $i\ge 0$, we then find that
 $$\ell(\mV_{N-1})-\ell(\mV_{N-2})\;\ge\; \ell(\mV_{N-1-i})-\ell(\mV_{N-2-i}).$$
For $i=N-2$, we find, using $\mV_0=\mE_0(X)=\unit$,
$$-\ell (\mV_{N-2})\;\ge\; \ell(\mV_1)-1.$$
The left-hand side is strictly negative, while the right-hand side is positive (strictly, in fact, as $\mV_1=X$), a contradiction. 
\end{proof}

\begin{remark}
For quasi-finite or pivotal tensor categories, Theorem~\ref{ThmSimp} also follows from Theorems~\ref{ThmFP} and~\ref{ThmDX} below.
\end{remark}

\begin{corollary}\label{LemMustPiv}
Let $X\in\cC$ be homotopically $N$-bounded for $N$ odd, then $X^{\ast\ast}\simeq X$. 
\end{corollary}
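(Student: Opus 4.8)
The plan is to extract, from homotopical $N$-boundedness with $N$ odd, an isomorphism $\mE_{N-1}(X)\simeq 0$ in $\cK(\cA)$ together with the chain of identifications in \autoref{CorTria}, and then to track how the left adjoint $X^{\ast\ast}=X^{(2)}$ enters. The key observation is \autoref{CorTria}(3)(c): under the hypothesis $\mE_{N-1}(X)=0$ in $\cK(\cA)$, one has for every $m\in\mN$
$$\mE_{m+N}(X)[1]\;\simeq\; \mE_{m}(X^{(N)})\otimes\mE_{N-2}(X)$$
in $\cK(\cA)$. Taking $m=1$ gives $\mE_{N+1}(X)[1]\simeq \mE_1(X^{(N)})\otimes \mE_{N-2}(X)= X^{(N)}[0]\otimes \mE_{N-2}(X)$. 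On the other hand, iterating \autoref{CorMinDeg} (or directly \autoref{CorTria}(3)(a), valid since $N+1$ even means $m=1$ is odd with the right parity) $X$ is homotopically $(2N)$-bounded, hence $\mE_{N+1}(X)$ is, up to shift and an invertible twist, identified with $\mE_1(X)=X[0]$; more precisely \autoref{CorTria}(1) gives $\mE_{N-2}(X)\simeq\mE_N(X)[1]$ and \autoref{CorTria}(3)(a) with $m=1$ gives $\mE_{N+1}(X)[1]\simeq \mE_{N-2}(X^{(3)})\otimes\mE_1(X)$. Comparing the two expressions for $\mE_{N+1}(X)[1]$ yields an isomorphism in $\cK(\cA)$ between $X^{(N)}\otimes \mE_{N-2}(X)$ and $\mE_{N-2}(X^{(3)})\otimes X$.

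First I would pin down the invertibility input: by \autoref{CorTria}(2), $\mE_{N-2}(X^{(j)})$ is left and right invertible in $\cK(\cA)$ for every $j\in\mZ$. So both $\mE_{N-2}(X)$ and $\mE_{N-2}(X^{(3)})$ are $\otimes$-invertible objects of $\cK(\cA)$, and the comparison above can be rewritten as $X^{(N)}\simeq \mE_{N-2}(X^{(3)})\otimes X\otimes \mE_{N-2}(X)^{-1}$ in $\cK(\cA)$ (using that $\mE_1(X)=X[0]$ and one may move the invertible factors past $X$ after checking the relevant commutativity, or simply keep the one-sided statement). The next step is to show $X^{(N)}\simeq X$ as objects of $\cA$, not merely of $\cK(\cA)$: since $X$ and $X^{(N)}$ sit in $\cA\subset\cK(\cA)$ in degree $0$, and an invertible object of $\cK(\cA)$ that becomes, after tensoring with $X$, an object of $\cA$ in degree $0$ must itself be an object of $\cA$ (concentrated in one degree), the invertible twist $\mE_{N-2}(X^{(3)})\otimes\mE_{N-2}(X)^{-1}$ is represented by an invertible object $L\in\cA$ placed in degree $0$, and $X^{(N)}\simeq L\otimes X$ in $\cA$. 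Finally, since $N$ is odd, $X^{(N)}$ is an odd left adjoint, and one reduces to the case $N=3$: the relation $X^{(N)}\simeq L\otimes X$ combined with the $2$-periodicity $X^{(j+2)}\simeq (X^{(j)})^{\ast\ast}$ and the fact (from $\mE_{N-1}(X)=0$) that taking $\ast\ast$ eventually returns $\mE$ to itself, forces $L\simeq\unit$ and $X^{(N)}\simeq X$; applying $\ast\ast$ repeatedly and using that $N$ is odd then propagates this to $X^{\ast\ast}\simeq X$.

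A cleaner route for the last paragraph, which I would actually write out: from homotopical $N$-boundedness with $N$ odd, \autoref{PropSph}(4) gives $\mE_{N-1}(X^{(i)})=0$ for all $i$, so every $X^{(i)}$ is homotopically $N$-bounded and the conclusions of \autoref{CorTria} apply uniformly to all of them. Running the identity of the first paragraph for $X^{(i)}$ in place of $X$ gives invertible objects $L_i\in\cA$ with $X^{(i+N)}\simeq L_i\otimes X^{(i)}$ for all $i\in\mZ$. Because $N$ is odd, $i\mapsto i+N$ together with $i\mapsto i+2$ generates all shifts by $1$ on the index: iterating $X^{(i+2)}\simeq X^{(i)\ast\ast}$ and $X^{(i+N)}\simeq L_i\otimes X^{(i)}$ and comparing $X^{(N+2k)}$ computed two ways (first add $N$ then add $2k$, versus first add $2k$ then add $N$) forces all the $L_i$ to be isomorphic to a single invertible object $L$ with $L^{\otimes N}\simeq\unit$ after enough iterations, and in fact $L\simeq\unit$ because the composite of $N$ shifts by $N$ returns $X^{(N^2)}$, which since $N$ is odd is $X$ twisted by $L^{\otimes N}$ and also equals $X$ via the $2$-periodicity (as $N^2\equiv 1\pmod 2$). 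Hence $X^{(N)}\simeq X$ in $\cA$, and composing with the $\ast\ast$-periodicity gives $X^{\ast\ast}=X^{(2)}\simeq X^{(2)}$ trivially, but more to the point $X^{(2)}\simeq X^{(2-N)}\otimes L^{-1}\simeq\cdots$; chasing the parity one lands on $X^{\ast\ast}\simeq X$. The main obstacle I anticipate is the bookkeeping in this last step — ensuring the invertible twists $L_i$ really do collapse to the unit rather than to some nontrivial invertible object, which is exactly where oddness of $N$ is essential (for $N$ even one only gets $X^{(2)}\simeq L\otimes X$, consistent with genuinely non-pivotal bounded objects such as those alluded to in \autoref{PropYang}). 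I would handle this by working in the universal rigid additive monoidal category on one object as in the proof of \autoref{PropDKS}, where $\End$ of each relevant complex is just scalars, so the twist $L$ is literally a unit of the ground ring and the $N$-fold composition argument pins it down.
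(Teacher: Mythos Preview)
Your approach has a genuine gap in the final step. You invoke ``$2$-periodicity'' to compare two computations of $X^{(N^2)}$ and thereby pin down the invertible twist, but $2$-periodicity is precisely the statement $X^{\ast\ast}\simeq X$ you are trying to prove; the relation $X^{(i+2)}\simeq X^{(i)\ast\ast}$ is a tautology, not a periodicity. With only $X^{(i+N)}\simeq L_i\otimes X^{(i)}$ for odd $N$ you obtain $N$-periodicity (up to twist) of the sequence of iterated duals, and that alone cannot force $2$-periodicity. There is also an earlier unjustified step: in a non-braided monoidal category you cannot move invertible factors past $X$, so passing from $X^{(N)}\otimes\mE_{N-2}(X)\simeq\mE_{N-2}(X^{(3)})\otimes X$ to ``$X^{(N)}\simeq L\otimes X$'' is not valid as stated.

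The paper's argument is entirely different and much shorter. Since $N-1$ is even, the rightmost (lowest-degree) term of the complex $\mE_{N-1}(X)$ is $\unit$ (in the notation of Remark~\ref{RemExp}, this is $A_J$ for the full twinned set $J=\{0,\ldots,N-2\}$). A null-homotopy of $\mE_{N-1}(X)$ then forces the identity of $\unit$ to factor through the preceding term $\bigoplus_i X^{(i+1)}\otimes X^{(i)}$, so some component $\unit\to X^{(i+1)}\otimes X^{(i)}$ is nonzero; by adjunction this is a nonzero map $X^{(i-1)}\to X^{(i+1)}$. Now homotopical $N$-boundedness implies $N$-boundedness, so $X$ is simple by Theorem~\ref{ThmSimp}, hence so are all $X^{(j)}$, and the nonzero map is an isomorphism $X^{(i-1)}\simeq X^{(i+1)}$. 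This $2$-periodicity at one index propagates to all indices by taking further duals, giving $X^{\ast\ast}\simeq X$.
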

\begin{proof}
The right-most term of $\mE_{N-1}(X)$ is $\unit$. The required chain homotopy requires a non-zero morphism
$$\unit\to X^{(i+1)}\otimes X^{(i)}$$
for some $0\le i<N-1$. As $X$ is simple by Theorem~\ref{ThmSimp}, this requires $X^{(i+1)}\simeq X^{(i-1)}$, and consequently, $X^{\ast\ast}\simeq X$.
\end{proof}

%\begin{question}
% Can we extend Lemma~\ref{LemMustPiv} to the case $N$ even? For example, are 4-spherical objects always pivotal?
%\end{question}

\section{Tensor categories: specific instances}\label{SecTens2}

From now on we assume that $\bk$ is algebraically closed. Unless further specified, $\cC$ is an arbitrary tensor category over $\bk$.

\subsection{Quasi-finite tensor categories}

\begin{theorem}\label{ThmFP}Let $\cC$ be a quasi-finite tensor category, $X\in\cC$ and $N\in\mZ_{>0}$.
\begin{enumerate}
\item $X$ is strictly $N$-bounded if and only if
$$\FPdim X\;=\; 2\cos\left({\frac{\pi}{N}}\right).$$
\item $X$ is unbounded if and only if $\FPdim X\ge2$.
\end{enumerate}
\end{theorem}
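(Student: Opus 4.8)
The strategy is to reduce the statement for a quasi-finite $\cC$ to the purely numerical identity in the Grothendieck ring supplied by Theorem~\ref{ThmK0}, and then to evaluate the class $[\mE_{N-1}(X)]$ using the growth dimension. Recall from Remark~\ref{RemCont} that $[\mE_{N-1}(X)] = \kappa_{N-1}([X^{(N-2)}],\dots,[X])$ evaluated at the continuant polynomial; since $\cC$ is quasi-finite, $X^{\ast\ast}$ and $X$ have the same Frobenius-Perron dimension, and more importantly $\FPdim$ is a ring homomorphism $K_0(\cC)\to\mR$. Applying $\FPdim$ to $[\mE_{N-1}(X)]=0$ therefore forces $\kappa_{N-1}(\FPdim X)=0$ by Remark~\ref{RemCont} together with the substitution $E_{N-1}(x,\dots,x)=\kappa_{N-1}(x)$. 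By Lemma~\ref{Lem4}(2), the real roots of $\kappa_{N-1}$ are exactly the numbers $2\cos(j\pi/N)$ for $1\le j<N$; since $\FPdim X\ge 1$ for a nonzero object, the only possibility is $\FPdim X=2\cos(\pi/N)$ (the unique root $\ge 1$, occurring for $j=1$, valid precisely when $N\ge 2$). This gives the "only if'' direction of (1), and immediately the "only if'' direction of (2): if $\FPdim X\ge 2$ then $\FPdim X$ is not a root of any $\kappa_{N-1}$, so $[\mE_{N-1}(X)]\ne 0$ for all $N$, hence $X$ is unbounded.

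For the converse directions, suppose $\FPdim X = 2\cos(\pi/N)$ with $N\ge 2$. Here I would use Theorem~\ref{ThmSimp} (bounded objects are simple) in reverse — or rather, argue directly with $\gd$ and Lemma~\ref{Lemgd}. The key point is that $[\mE_{N-1}(X)]$, as a polynomial expression in the simple classes, lies in the subring generated by $[X]$ and its duals; one shows first that $X$ must be simple. Indeed if $X$ is not simple then $\gd(X)=\FPdim X<2$ forces, via the growth estimates, that the iterated tensor powers grow sub-exponentially with exponent $2\cos(\pi/N)<2$, which is incompatible with a non-simple object having an endomorphism algebra / filtration that would push $\ell(X^{\otimes i})$ up; more cleanly, Theorem~\ref{ThmSimp} already tells us bounded $\Rightarrow$ simple, so it suffices to show $\FPdim X = 2\cos(\pi/N)$ implies bounded. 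With $X$ simple and $\FPdim X = 2\cos(\pi/N) = q+q^{-1}$ for $q = \zeta_{2N}$, the category generated by $X$ is a fusion category with the fusion graph of $A_{N-1}$ (by the Frobenius-Perron theory of \cite[Ch.~3, Ch.~8]{EGNO}, a simple object of dimension $<2$ generates a category whose fusion matrix has norm $<2$, hence is of ADE type, and dimension exactly $2\cos(\pi/N)$ pins it to type $A_{N-1}$ with the natural generator). In that situation one computes $[\mE_{N-1}(X)] = [\JW_{N-1}(\cdots X^\ast X)]$ via Theorem~\ref{ThmPiv1} (after checking $\bR(X)$ is $\cJW_{N-1}$, which holds here since $[n]_{\bR}\ne 0$ for $n<N$ as $q$ is a primitive $2N$-th root), and this Jones-Wenzl projector is zero in the $A_{N-1}$ Verlinde category — the standard vanishing of $\JW_{N-1}$ at a $2N$-th root of unity — so $[\mE_{N-1}(X)]=0$ in $K_0$, whence $X$ is $N$-bounded by Theorem~\ref{ThmK0}. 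Strictness follows because $\FPdim X = 2\cos(\pi/N)\ne 2\cos(\pi/M)$ for $M<N$, so $X$ cannot be $M$-bounded for any $M<N$ by the "only if'' direction already proved, combined with Corollary~\ref{CorNM} (the set of $M$ with $X$ $M$-bounded is the set of multiples of the minimal one).

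Finally (2): the "if'' direction is the contrapositive of what we need — if $X$ is bounded, it is strictly $N$-bounded for some $N$, so by (1) $\FPdim X = 2\cos(\pi/N) < 2$; equivalently, $\FPdim X \ge 2$ forces unbounded, which we already obtained above from the root analysis of $\kappa_{N-1}$. Conversely if $\FPdim X < 2$, then writing $\FPdim X = 2\cos(\theta)$ with $\theta\in(0,\pi/2]$... one must be careful: not every value $<2$ is of the form $2\cos(\pi/N)$. The resolution is that $\FPdim X$ of a \emph{simple} object in a quasi-finite tensor category that is bounded is constrained, but a priori a simple object could have irrational dimension $<2$. Here I would invoke the structure theory once more: a nonzero object with $\gd(X)<2$ generates a quasi-finite tensor category of subexponential growth, and Theorem~\ref{ThmSimp} plus the ADE dichotomy for norm-$<2$ fusion graphs (\cite[Ch.~8]{EGNO}) forces $\FPdim X\in\{2\cos(\pi/N):N\ge 2\}$; so "$\FPdim X<2$'' is in fact equivalent to "$\FPdim X = 2\cos(\pi/N)$ for some $N$'', and we are back to case (1). \textbf{The main obstacle} I anticipate is precisely this last point — ruling out simple objects of dimension strictly between $2\cos(\pi/N)$ and $2\cos(\pi/(N+1))$; this is not formal and genuinely requires the Frobenius-Perron / ADE classification of fusion rings generated by one object of small dimension, so I would need to either cite it carefully from \cite{EGNO} or reprove the relevant norm estimate. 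Everything else is bookkeeping: translating between $\mE_{N-1}$, continuants, Chebyshev polynomials, and Jones-Wenzl idempotents via the results already established in Sections~\ref{SecAdd}--\ref{SecTens1}.
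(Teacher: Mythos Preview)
Your ``only if'' argument for (1) contains a concrete error: you claim $2\cos(\pi/N)$ is the \emph{unique} root of $\kappa_{N-1}$ that is $\ge 1$, but this fails already for $N=7$, where $2\cos(2\pi/7)\approx 1.247$ is a second such root. From $\kappa_{N-1}(\FPdim X)=0$ and $\FPdim X\ge 1$ alone you cannot pin down $j=1$. What you are missing is Theorem~\ref{ThmConjP}: if $X$ is strictly $N$-bounded then $\mE_{M-1}(X)\simeq \mV_{M-1}(X)[0]$ with $\mV_{M-1}(X)\ne 0$ for every $1<M<N$, so $\kappa_{M-1}(\FPdim X)=\FPdim\mV_{M-1}(X)>0$ is the dimension of an honest nonzero object. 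These strict inequalities, accumulated over $M=2,3,\dots,N-1$, force $\FPdim X>2\cos(\pi/(N-1))\ge 2\cos(j\pi/N)$ for all $j\ge 2$, leaving $j=1$ as the only possibility.

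The paper runs exactly this as a single iterative loop that handles both directions of (1) and (2) at once: maintain the hypotheses $\FPdim X>2\cos(\pi/(N-1))$ and $\mE_m(X)\ne 0$ for $m<N-1$ (automatic for $N=2$); then $\kappa_{N-1}(\FPdim X)=\FPdim\mV_{N-1}(X)\ge 0$, and the factorisation in Lemma~\ref{Lem4}(2) shows every factor but the first is strictly positive, so either $\FPdim X=2\cos(\pi/N)$ and $X$ is strictly $N$-bounded, or $\FPdim X>2\cos(\pi/N)$ and one passes to $N+1$. No simplicity, pivotality, or ADE classification is needed. Your converse route via an $A_{N-1}$ fusion subcategory and Jones--Wenzl vanishing has further gaps: the subcategory generated by $X$ need not be semisimple; $\FPdim X=2\cos(\pi/N)$ does not force the fusion graph to be $A_{N-1}$ (witness Example~\ref{exallN}(3), where $\FPdim X=\sqrt{3}=2\cos(\pi/6)$ yet $X^{\otimes 2}\simeq\unit^3$); and invoking Theorem~\ref{ThmPiv1} presupposes $X^{\ast\ast}\simeq X$, which is not given. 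The paper remarks explicitly that one could shortcut via the Jones-spectrum result \cite[Corollary~3.3.16]{EGNO} you gesture at, but deliberately avoids it so that the same argument carries over verbatim to the non-quasi-finite pivotal setting of Theorem~\ref{ThmDX}.
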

The remainder of this section is devoted to the proof of the theorem.

\begin{remark}
 That Frobenius-Perron dimensions must be in the `Jones spectrum'
$$\{2\cos\left(\frac{\pi}{N}\right)\,|\, N\ge 2\}\;\cup\;\mR_{\ge 2}$$
is well-known, see \cite[Corollary~3.3.16]{EGNO}, and we could simplify the proof below by using this fact. We avoid this, so that the proof will also work in the setting of Section~\ref{SubSecPiv}.

The numbers $2\cos\left({\frac{\pi}{N}}\right)$ can be characterised as the only algebraic integers in the real interval $[0,2[$ which are totally real and of maximal absolute value in their Galois orbit.
\end{remark}

\begin{lemma}\label{Lem4Dim}
Let $\phi:K_0(\cC)\to R$ be a ring homomorphism to a commutative ring $R$ such that $\phi([X^{(i)}])=\phi([X])$ for all $i\in\mZ$. Then,
$$K_0(\cD^b(\cC))\;\xrightarrow{\sim}\;K_0(\cC)\;\xrightarrow{\phi}\;R\quad\mbox{satisfies}\quad [\mE_{n}(X)]\mapsto \kappa_n(\phi([X])).$$ In particular, if $X\in\cC$ is $N$-bounded then
$$\kappa_{N-1}(\phi([X]))\;=\;0.$$
\end{lemma}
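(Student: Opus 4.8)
The plan is to combine the description of the class of $\mE_n(X)$ in the Grothendieck group from Remark \ref{RemCont} with the hypothesis on $\phi$. First I would recall that by Remark \ref{RemCont}, in the split Grothendieck group of $\cC$ (and hence, via the standard isomorphism $K_0(\cD^b(\cC))\xrightarrow{\sim}K_0(\cC)$ sending a complex to its Euler characteristic, in $K_0(\cC)$) we have
$$[\mE_n(X)]\;=\;E_n([X^{(n-1)}],[X^{(n-2)}],\dots,[X]),$$
where $E_n$ is Euler's signed continuant polynomial. Applying the ring homomorphism $\phi$ and using the hypothesis $\phi([X^{(i)}])=\phi([X])=:x$ for every $i\in\mZ$, all the arguments collapse to the single element $x$, so that
$$\phi([\mE_n(X)])\;=\;E_n(x,x,\dots,x)\;=\;\kappa_n(x),$$
the last equality being the identity $E_n(x,\dots,x)=\kappa_n(x)$ recorded in Remark \ref{RemCont} (equivalently \cite[Proposition~1.4.1]{DKS}). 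This proves the displayed formula.

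For the final assertion, suppose $X$ is $N$-bounded. By Theorem \ref{ThmK0} (equivalence of (1) and (3)), $N$-boundedness gives $[\mE_{N-1}(X)]=0$ in $K_0(\cC)$, hence also $0$ in $K_0(\cD^b(\cC))$. Applying $\phi$ to this vanishing class and using the formula just established yields $\kappa_{N-1}(\phi([X]))=\kappa_{N-1}(x)=0$, as claimed.

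The only point that needs a little care — and the closest thing to an obstacle — is making sure the isomorphism $K_0(\cD^b(\cC))\xrightarrow{\sim}K_0(\cC)$ is the Euler-characteristic map and that $[\mE_n(X)]$, computed as an alternating sum of its terms, matches the value $E_n$ predicted by Remark \ref{RemCont}; this is exactly the content of that remark (following \cite{DKS}), so there is nothing new to prove. Everything else is a direct substitution into a polynomial identity, so the argument is short.
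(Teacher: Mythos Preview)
Your proof is correct and matches one of the two routes the paper indicates (the one via Remark~\ref{RemCont}); the paper's alternative is to read off the recursion $\phi([\mE_n(X)])=x\,\phi([\mE_{n-1}(X)])-\phi([\mE_{n-2}(X)])$ directly from the exact triangle~\eqref{DefTria} and identify it with the defining recursion of $\kappa_n$ in~\ref{DefCheb}. For the final assertion you could equally well cite Proposition~\ref{PropSph2}(2) instead of Theorem~\ref{ThmK0}, since $\mE_{N-1}(X)\simeq 0$ in $\cD(\cC)$ already forces its class to vanish, but your argument is fine as written.
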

\begin{proof}
This follows from comparing \eqref{DefTria} with the definition in \ref{DefCheb}, or from Remark~\ref{RemCont}.
\end{proof}

\begin{example}\label{Expsi2}
If $\cC$ is a quasi-finite symmetric tensor category and $X\in\cC$ is $N$-bounded, then
$$\FPdim(\Sym^2X)-\FPdim(\wedge^2 X)\;=\; 2\cos\left(\frac{j\pi}{N}\right)$$
for some $1\le j<N$. Indeed, this follows from Lemma~\ref{Lem4}(2), and Lemma~\ref{Lem4Dim} applied to the ring homomorphism obtained by composing the second Adams operation, see \cite[Lemma~4.4]{EOV}, with the Frobenius-Perron dimension
$$K_0(\cC)\xrightarrow{\psi^2} K_0(\cC)\xrightarrow{\FPdim}\mR.$$ Note that $\mathrm{char}(\bk)=2$ is a special case. Then one should replace $\psi^2$ by $[X]\mapsto [\Fr X]$.
It will actually follow from (the proof of) Theorem~\ref{Thmpn} that the only values which occur are for $j=2$ or $j=N-2$.
\end{example}

%This lemma already implies the following weak form of Theorem~\ref{ThmFP}. From now on we assume that $\cC$ is a finite tensor category.
%\begin{corollary}\label{LemFP}
%If $\mE_{N-1}(X)=0$ or $\mE^{N-1}(X)=0$ (for $N\ge 2$), then
%$$\FPdim X\;\in\; \left\{2\cos{\frac{k\pi}{N}}\,|\, 1\le k\le N/2\right\}.$$
%\end{corollary}
%\begin{proof}
%This is a direct application of Lemma~\ref{Lem4Dim} to the ring homomorphism
%$$\FPdim:\;K_0(\cC)\to\mR,$$
%by Lemma~\ref{Lem4}(2).
%\end{proof}
%

\begin{proof}[Proof of Theorem~\ref{ThmFP}]
We use Theorem~\ref{ThmK0} freely.
Fix $N>1$ and some $X\in\cC$ for which
\begin{equation}\label{eq2ass}\FPdim X \;>\; 2\cos{\frac{\pi}{N-1}},\quad\mbox{and}\quad\mE_m(X)\not=0\;\mbox{ for all $m<N-1$.} \end{equation}
It follows from the second assumption in \eqref{eq2ass} and Theorem~\ref{ThmConjP} that $\mE_{N-1}(X)$ is quasi-isomorphic to $\mV_{N-1}(X)[0]$. Then, by Lemma~\ref{Lem4Dim},
$$\FPdim\mV_{N-1}(X)= \kappa_{N-1}(\FPdim X)=\prod_{1\le j<N}\left(\FPdim X-2\cos\left(\frac{j\pi}{N}\right)\right).$$
This value must be non-negative, but by the first assumption in \eqref{eq2ass}, every factor in the right-hand side, except potentially the first one, must be strictly positive. 
Hence there are two options: either $\FPdim X=2\cos\left(\frac{\pi}{N}\right)$ (so that the first factor is zero) and $X$ is strictly $N$-bounded; or alternatively $\FPdim X>2\cos\left(\frac{\pi}{N}\right)$ and $\mE_m(X)$ is non-zero in $\cD(\cC)$ for all $m<N$ (our two starting assumptions in \eqref{eq2ass}, but now for $N+1$).

In case $N=2$, assumptions \eqref{eq2ass} are automatic. Part (1) thus follows from the above iterative procedure.

The procedure also implies that for $X$ to be unbounded, we must have
$$\FPdim X\;>\; 2\cos\left(\frac{\pi}{N}\right)$$
for all $N>1$, from which part (2) follows.
\end{proof}

The following corollary is immediate from Theorem~\ref{ThmFP}.
\begin{corollary}\label{CorFP}
Let $\cC$ be a quasi-finite tensor category and consider bounded $X,Y\in\cC$. Then the length of $X\otimes Y$ is at most three.

\end{corollary}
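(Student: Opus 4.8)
The plan is to deduce this from Theorem~\ref{ThmFP} together with the Frobenius-Perron dimension estimate for lengths. First I would recall that by Theorem~\ref{ThmFP}(2), a bounded object $X$ in a quasi-finite tensor category has $\FPdim X < 2$, so in fact $\FPdim X = 2\cos(\pi/N)$ for some $N \ge 2$ by part (1); in particular $\FPdim X \le 2\cos(\pi/3) = 1$ when $N=3$, but for general bounded $X$ we only get $\FPdim X < 2$. Thus for bounded $X, Y$ we have $\FPdim(X \otimes Y) = \FPdim X \cdot \FPdim Y < 4$.

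Next I would use the standard fact (see \cite[\S 4.5]{EGNO}) that in a quasi-finite tensor category the Frobenius-Perron dimension of any nonzero object is at least $1$, and that $\FPdim$ is additive on short exact sequences, hence $\FPdim(X \otimes Y) \ge \ell(X \otimes Y)$ — more precisely, $\FPdim$ of an object is the sum of the $\FPdim$ of its composition factors, each of which is $\ge 1$. Combining with $\FPdim(X \otimes Y) < 4$ gives $\ell(X \otimes Y) < 4$, i.e.\ $\ell(X\otimes Y) \le 3$, which is exactly the claim.

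The only subtlety — and the step I would be most careful about — is handling the case where $X$ or $Y$ is zero (then $X \otimes Y = 0$ and the length is $0 \le 3$, trivially fine) and making sure the inequality $\FPdim(X\otimes Y) < 4$ is strict rather than just $\le 4$: since $\FPdim X = 2\cos(\pi/N) < 2$ strictly for every finite $N$, and likewise for $Y$, the product is strictly less than $4$, so there cannot be four composition factors. (If one instead only knew $\FPdim X \le 2$ one would get $\ell \le 4$, which is why strictness matters.) No real obstacle here; this is essentially a one-line consequence of Theorem~\ref{ThmFP} once the elementary properties of $\FPdim$ in quasi-finite tensor categories are invoked.
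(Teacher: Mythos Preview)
Your argument is correct and is exactly the intended one: the paper simply declares the corollary ``immediate from Theorem~\ref{ThmFP}'', and your write-up is the natural unpacking of that sentence, using multiplicativity of $\FPdim$, the strict bound $\FPdim X<2$ for bounded $X$, and additivity over composition factors together with $\FPdim L\ge 1$ for simple $L$.
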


\begin{remark}\label{RemCorFP}
Corollary~\ref{CorFP} is sharp (already for symmetric tensor categories, see Theorem~\ref{ThmVer}):
\begin{enumerate}
\item For $p>2$, the $p^{2}$-bounded object $L_1$ and $p$-bounded object $L_p$ in $\Ver_{p^2}$ have tensor product equal to the simple object $L_{p+1}$.
\item For $p>3$, the $p$-bounded object $L_1$ in $\Ver_p$, satisfies $L_1^{\otimes 2}=\unit\oplus L_2$.
\item For $p=2$ and $n>2$, the $2^n$-bounded generator $L_1$ of $\Ver_{2^n}$ is such that $L_1^{\otimes 2}$ has length three; with top and socle $\unit$ and $2^{n-1}$-bounded middle $L_2$, see \cite{BE1}. Another example is the generator of the Tambara-Yamagami category for $A=\mZ/3$, see Example~\ref{exallN}(3).
\end{enumerate}
\end{remark}

\begin{example}\label{ExFP}
The Frobenius-Perron dimension of a strictly $N$-bounded object is
$$
\begin{tabular}{ l | r  }
  N & $2\mathrm{cos}(\pi/N)$  \\
  \hline
  \hline
  2 & 0 \\
  \hline
  3 & 1  \\
  \hline
  4 & $\sqrt{2}$  \\
  \hline
  5 & $(1+\sqrt{5})/2$  \\
  \hline
  6 & $\sqrt{3}$  \\
\end{tabular}
$$
\end{example}

\subsection{Pivotal objects}
\label{SubSecPiv}
For pivotal objects in tensor categories, we can replace the Frobenius-Perron dimension in the analysis of the previous section by the growth dimension in \eqref{DefDX}.

%\subsubsection{}For $X\in\cC$, we set
%$$\mathbf{D}(X)\;=\; \sqrt{\gd(X^\ast\otimes X)}.$$
%If $\cC$ is finite, then $\bD(X)=\FPdim(X)$.

\begin{theorem}\label{ThmDX}Consider $X\in\cC$ with $X^{\ast\ast}\simeq X$ and $N\in\mZ_{>0}$.
\begin{enumerate}
\item $X$ is strictly $N$-bounded if and only if
$$\bD(X)\;=\; 2\cos\left({\frac{\pi}{N}}\right).$$
\item $X$ is unbounded if and only if $\bD(X)\ge2$.
\end{enumerate}
\end{theorem}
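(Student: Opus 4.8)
\textbf{Proof strategy for Theorem~\ref{ThmDX}.}
The plan is to reuse, almost verbatim, the argument used to prove Theorem~\ref{ThmFP}, replacing the Frobenius-Perron dimension by the additive invariant $\bD(X)=\sqrt{\gd(X^\ast\otimes X)}$ from \eqref{DefDX}, and using Lemma~\ref{Lemgd} in place of the multiplicativity/positivity properties of $\FPdim$ that were used there. The key point is that Lemma~\ref{Lemgd} gives us exactly what we need: if a class in $K_0(\cC)$ is a polynomial $f([Y])$ with nonnegative coefficients in the class of a moderate-growth object $Y$, then $\gd$ of a representative equals $f(\gd(Y))$, and if $g([Y])=0$ (resp.\ $g([Y])$ is the class of a nonzero object) then $g(\gd(Y))=0$ (resp.\ $>0$).

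First I would set up the moderate-growth hypothesis. If $X$ is bounded, then by Theorem~\ref{ThmSimp} $X$ is simple, and the relevant continuant complexes vanish in $\cD(\cC)$ for large enough index; I would argue (using Theorem~\ref{ThmConjP} and Corollary~\ref{CorTria2}) that this forces $X^\ast\otimes X$ to be of moderate growth, so that $\bD(X)$ is a well-defined real number and the tensor subcategory generated by $X$ is of moderate growth. Conversely, if $\bD(X)<2$ we must show $\gd(X^\ast\otimes X)$ is finite a priori before the argument starts; more cleanly, I would phrase the whole theorem under the blanket that $X^{\ast\ast}\simeq X$ and split into the case $X^\ast\otimes X$ of moderate growth (where the argument runs) and the case of non-moderate growth (where $\bD(X)=\infty\ge 2$ and one checks $X$ is unbounded directly, e.g.\ because boundedness would force, via Lemma~\ref{Lem4Dim} with $\phi=\gd$, a polynomial identity $\kappa_{N-1}(\gd(X))=0$ forcing moderate growth). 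The role of the pivotal hypothesis $X^{\ast\ast}\simeq X$ is precisely to guarantee $\gd(X^{(i)})=\gd(X)$ for all $i$, so that Lemma~\ref{Lem4Dim} applies with the ring homomorphism $\phi\colon K_0(\cC)\to\mR$ induced by $\gd$ (which is additive on the subcategory generated by $X$, again by Lemma~\ref{Lemgd}(2)).

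The heart of the proof is then the iterative dimension estimate, copied from the proof of Theorem~\ref{ThmFP}: assume inductively that $\bD(X)>2\cos(\pi/(N-1))$ and $\mE_m(X)\neq 0$ in $\cD(\cC)$ for all $m<N-1$. By Theorem~\ref{ThmConjP}, $\mE_{N-1}(X)\simeq\mV_{N-1}(X)[0]$, and by Lemma~\ref{Lem4Dim} (with $\phi=\gd$) together with Lemma~\ref{Lemgd}(1) and Lemma~\ref{Lem4}(2),
$$\gd(\mV_{N-1}(X))=\kappa_{N-1}(\gd(X))=\prod_{1\le j<N}\left(\gd(X)-2\cos\left(\frac{j\pi}{N}\right)\right).$$
Since $\gd(X)=\bD(X)$ when $X^\ast\otimes X$ has moderate growth (here I would either invoke \eqref{DefDX} directly, or note that the simplicity of $X$ forces equality in \eqref{DefDX}; this equality is the one slightly delicate spot, see below), the inequality $\bD(X)>2\cos(\pi/(N-1))$ makes every factor except possibly the first strictly positive, and by Lemma~\ref{Lemgd}(2)(a) the product must be $\ge 0$; hence either $\gd(X)=2\cos(\pi/N)$ and $X$ is strictly $N$-bounded, or $\gd(X)>2\cos(\pi/N)$ and the induction hypothesis holds with $N+1$ in place of $N$. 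The base case $N=2$ is automatic, and running the induction proves part (1); part (2) follows since unboundedness forces $\gd(X)>2\cos(\pi/N)$ for all $N$, i.e.\ $\gd(X)\ge 2$, and conversely $\gd(X)\ge 2$ obstructs every step.

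The main obstacle I anticipate is reconciling $\gd(X)$ with $\bD(X)=\sqrt{\gd(X^\ast\otimes X)}$: \eqref{DefDX} only gives $\gd(X)\le\bD(X)$ in general. One needs $\gd(X)=\bD(X)$ for strictly $N$-bounded $X$. I would obtain this from the fact (Theorem~\ref{ThmSimp}) that bounded $X$ is simple, so that $X^\ast\otimes X$ contains $\unit$ with multiplicity one and the square of $[X]$ in $K_0$ is $[X^\ast\otimes X]$; combined with Lemma~\ref{Lemgd}(1) applied to $f(x)=x^2$ this yields $\gd(X)^2=\gd(X^\ast\otimes X)=\bD(X)^2$. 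Alternatively, and perhaps more robustly, I would simply state the theorem directly in terms of $\gd(X)$ (or note $\bD(X)$ and $\gd(X)$ agree here) and relegate the identification to a short remark. The rest is genuinely routine once Lemma~\ref{Lemgd} is in hand — it is the same computation as Theorem~\ref{ThmFP} with $\FPdim$ replaced by $\gd$.
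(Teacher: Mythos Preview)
Your overall strategy—rerun the proof of Theorem~\ref{ThmFP} with growth dimension replacing $\FPdim$—is the paper's strategy, but your execution has a genuine gap. You invoke Lemma~\ref{Lem4Dim} with $\phi=\gd$, claiming that $\gd$ yields a ring homomorphism on the subcategory generated by $X$ ``by Lemma~\ref{Lemgd}(2)''. Lemma~\ref{Lemgd}(2) says no such thing: it is a sign/vanishing statement for polynomials in a \emph{single} class $[Y]$, not an additivity statement. That $\gd$ be additive is exactly the open \cite[Conjecture~8.1.7]{CEO3} the paper flags. For the same reason your displayed equality $\gd(\mV_{N-1}(X))=\kappa_{N-1}(\gd(X))$ is unjustified: $[\mE_n(X)]$ involves both $[X]$ and $[X^\ast]$, so Lemma~\ref{Lemgd} with $Y=X$ does not apply. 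And your proposed fix $\gd(X)=\bD(X)$ via Lemma~\ref{Lemgd}(1) with $f(x)=x^2$ only yields $\gd(X^{\otimes 2})=\gd(X)^2$, not $\gd(X^\ast\otimes X)=\gd(X)^2$, since $[X^\ast\otimes X]\neq[X]^2$ under the hypothesis $X^{\ast\ast}\simeq X$ (which does not force $X^\ast\simeq X$).

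The missing idea is a parity trick that reduces everything to one variable. Take $Y:=X^\ast\otimes X$. With $X^{\ast\ast}\simeq X$ the alternating continuant (Remark~\ref{RemCont}) gives, in $K_0(\cC)$, that $[\mE_n(X)]=\kappa_n\bigl(\sqrt{[X^\ast][X]}\bigr)$ for $n$ even and $[X^\ast\otimes\mE_n(X)]=\sqrt{[X^\ast][X]}\,\kappa_n\bigl(\sqrt{[X^\ast][X]}\bigr)$ for $n$ odd; both are honest polynomials in $[Y]$ because $\kappa_n$ has only terms of parity $n$. Now Lemma~\ref{Lemgd}(2) applies directly with this single $Y$, and since $\gd(Y)=\bD(X)^2$ by definition, one obtains $\kappa_{N-1}(\bD(X))>0$ (resp.\ $=0$) whenever $\mV_{N-1}(X)\neq 0$ (resp.\ $=0$), via Theorem~\ref{ThmConjP}. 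The iterative argument from Theorem~\ref{ThmFP} then runs verbatim in terms of $\bD(X)$, never touching $\gd(X)$ and never assuming $\gd$ is additive.
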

\begin{proof}
For $n$ even, we find that $[\mE_{n}(X)]$  equals $\kappa_{n}(\sqrt{[X^\ast][X]})$. For $n$ odd, we find that $[X^\ast\otimes\mE_{n}(X)]$  equals $\sqrt{[X^\ast][X]}\kappa_{n}(\sqrt{[X^\ast][X]})$.
We can then apply Theorem~\ref{ThmConjP} and Lemma~\ref{Lemgd}(2) to $Y:=X^\ast\otimes X$ and the polynomials $g(x)=\kappa_{n}(\sqrt{x})$ or $g(x)=\sqrt{x}\kappa_{n}(\sqrt{x})$, to copy the proof of Theorem~\ref{ThmFP} almost verbatim.
\end{proof}

\begin{remark}
 The observation that $\gd(X^\ast\otimes X)$ must be in the Jones spectrum
$$\{4\cos^2\left(\frac{\pi}{N}\right)\,|\, N\ge 2\}\;\cup\;\mR_{\ge 4}$$
in the generality of Theorem~\ref{ThmDX}, appears to be new.
\end{remark}

\subsection{Symmetric tensor categories}

\begin{theorem}\label{ThmSymm1}
Let $\cC$ be a symmetric tensor category over $\bk$.
\begin{enumerate}
\item If $\mathrm{char}(\bk)=0$, then the only bounded objects in $\cC$ are zero or invertible. 
\item If $\mathrm{char}(\bk)=p>0$ and $X$ is $p^n$-bounded, for $n\in\mZ_{>0}$, then $\dim X=\pm 2$ and $X$ is homotopically $p^n$-bounded.
\end{enumerate}
\end{theorem}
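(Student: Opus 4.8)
\textbf{Part (1).}
The plan is to reduce to the structure of symmetric tensor categories in characteristic zero. If $X$ is bounded, then by Theorem~\ref{ThmSimp} it is simple (or zero, in which case there is nothing to prove). Suppose $X\neq 0$; then the tensor subcategory $\langle X\rangle$ generated by $X$ is a symmetric tensor category, and I would like to argue it is of moderate growth. The key input is Corollary~\ref{LemMustPiv}: one must distinguish the cases where the strict bounding integer $N$ is odd or even. If $N$ is odd, $X^{\ast\ast}\simeq X$ and we may apply Theorem~\ref{ThmDX}, so $\bD(X)=2\cos(\pi/N)<2$; in particular $X^\ast\otimes X$ has moderate growth, hence $\langle X\rangle$ does. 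If $N$ is even, I would first handle $N=3$ separately (then $X$ is invertible and we are done), and for $N\geq 4$ even I would pass through $\mE_{N-2}(X)$, which by Corollary~\ref{CorTria2}(2) is invertible; since in the symmetric case $\mE_{N-2}(X)$ involves $X^\ast\otimes X^\ast\otimes\cdots$, invertibility of this object again forces $X^\ast\otimes X$ to have finite length powers, hence moderate growth. Once $\langle X\rangle$ is of moderate growth and symmetric over a field of characteristic zero, Deligne's theorem \cite{Del02} applies: $\langle X\rangle$ fibers over $\sVecc$ (or $\Vecc$). Under such a fiber functor, $X$ maps to a (super)vector space whose categorical dimension has absolute value $\FPdim X$ and which is bounded; but a bounded object in $\sVecc$ is invertible (i.e.\ of superdimension $\pm 1$) by the computation of $\mE_n$ there, or directly because $\FPdim X = 2\cos(\pi/N)\in\{0,1\}$ forces $X=0$ or $X$ invertible (as $\langle X\rangle$ quasi-finite implies $\FPdim X$ equals $\gd(X)$). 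Finally, a simple object with a fiber functor to $\sVecc$ and $\FPdim = 1$ is invertible, and faithfulness of the fiber functor transports invertibility back to $\cC$.

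\textbf{Part (2).}
Assume $\mathrm{char}(\bk)=p>0$ and $X$ is $p^n$-bounded. First I would check that $X$ is pivotal, i.e.\ $X^{\ast\ast}\simeq X$. If $p$ is odd then $p^n$ is odd and Corollary~\ref{LemMustPiv} gives this directly; if $p=2$ one argues that a strictly $M$-bounded object with $M\mid p^n=2^n$ has $M$ a power of $2$, and for $M=2$ it is zero, for $M=4$ one can treat the $2^{\ast\ast}\simeq$ issue by the same kind of argument as in Part (1) via invertibility of $\mE_{M-2}(X)$ — but actually the cleanest route is: since $X$ is $2^n$-bounded it is also $M$-bounded for the minimal bounding $M$, $M\mid 2^n$, and we may run the argument below with $M$ in place of $p^n$ after noting $M$ is again a prime power (a power of $2$), reducing to checking $X^{\ast\ast}\simeq X$ which holds because $X$ simple and $\mE_{M-1}(X)=0$ forces, through the chain-homotopy argument in Corollary~\ref{LemMustPiv} applied carefully in the even case, an isomorphism $X^{(i+1)}\simeq X^{(i-1)}$. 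Granting pivotality, set $\bR=\bR(X)=(R,d_{X^\ast},d_X)$ with $R=\bk$. Since $\cC$ is symmetric, the braiding identifies $X$ with $X^\ast$'s dimension behaviour so that $d_X=d_{X^\ast}=:\delta=\dim X$; more precisely in a symmetric category $d_{X^{\ast}}=d_X$. By Lemma~\ref{Lem4Dim} (applied with $\phi$ the categorical dimension, using that in the symmetric pivotal case $\dim X^{(i)}=\dim X$), $p^n$-boundedness gives $\kappa_{p^n-1}(\delta)=0$, and by Lemma~\ref{Lem4}(1), $\kappa_{p^n-1}(x)=(x^2-4)^{(p^n-1)/2}$ for $p>2$ and $x^{2^n-1}$ for $p=2$; in the first case $\delta=\pm 2$, in the second $\delta=0=\pm 2$ in $\mathbb{F}_2$. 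So $\dim X=\pm2$.

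\textbf{From $\dim X=\pm 2$ to homotopical $p^n$-boundedness.}
The final claim is that $X$ is \emph{homotopically} $p^n$-bounded. By Corollary~\ref{ThmPiv2} (combined with Theorem~\ref{ThmPiv1}), it suffices to show $\bR(X)=\bR$ is $\cJW_{p^n-1}$ and $\cJW'_{p^n-1}$ and that $\JW_{p^n-1}$ evaluated on $\cdots\otimes X\otimes X^\ast\otimes X$ (and the primed version) vanishes. Since $d_X=d_{X^\ast}=\pm 2$ we have $\bR=\bR'=(\bk,\pm2,\pm2)$, and using the rescaling isomorphism $\TL_n(R,\delta_1,\delta_2)\simeq\TL_n(R,\lambda\delta_1,\lambda^{-1}\delta_2)$ we may normalize to $\delta=2$. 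Then $\bR$ is $\cJW_{p^n-1}$ by Example~\ref{ExEx}(1) (Lucas' theorem plus Example~\ref{ExQN}(2)), and the same for $\bR'=\bR$. For the vanishing: $X$ is $p^n$-bounded, so $\mE_{p^n-1}(X)=0$ in $\cD(\cC)$; by Theorem~\ref{ThmPiv1} we also have $\mE_{p^n-1}(X)\simeq \JW_{p^n-1}(\cdots\otimes X\otimes X^\ast\otimes X)[0]$ in $\cK(\cC)$, a complex concentrated in degree $0$, and a degree-$0$ complex that is acyclic is the zero object, so $\JW_{p^n-1}(\cdots\otimes X\otimes X^\ast\otimes X)=0$ already in $\cC$; the primed statement follows symmetrically (or from $\bR=\bR'$). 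Hence both conditions of Corollary~\ref{ThmPiv2} hold and $X$ is homotopically $p^n$-bounded.

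\textbf{Main obstacle.}
The delicate point, and the one I would spend the most care on, is establishing pivotality $X^{\ast\ast}\simeq X$ \emph{uniformly}, in particular when $p=2$ where the strict bounding integer is a power of $2$ rather than odd, so Corollary~\ref{LemMustPiv} does not apply off the shelf; one has to run the chain-homotopy/simplicity argument in the even case and check that the requisite nonzero map $\unit\to X^{(i+1)}\otimes X^{(i)}$ still forces $X^{(i+1)}\simeq X^{(i-1)}$, hence $X^{\ast\ast}\simeq X$, or else argue via invertibility of $\mE_{M-2}(X)$ as in Part (1). A secondary subtlety is the passage, in Part (1), from "$X$ bounded" to "$\langle X\rangle$ of moderate growth" so that Deligne's theorem is available — handling the even-$N$ case of that reduction cleanly (via Corollary~\ref{CorTria2}(2) and the shape of $\mE_{N-2}(X)$) is where the bookkeeping lives.
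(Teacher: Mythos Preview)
Your ``main obstacle'' is not an obstacle at all: a symmetric tensor category is canonically pivotal. The symmetric braiding provides a natural isomorphism $X\xrightarrow{\sim} X^{\ast\ast}$ for every object (see \cite[\S4.7]{EGNO}); in fact the category is spherical, so $d_X=d_{X^\ast}$. All of your case distinctions on the parity of $N$ and the special treatment of $p=2$ are therefore unnecessary, and the paper's proof simply invokes Theorem~\ref{ThmDX} directly to obtain moderate growth in Part~(1), followed by Deligne's theorem, Lemma~\ref{TensorF} and Theorem~\ref{ThmSimp}. Note also that your appeal to Corollary~\ref{LemMustPiv} in Part~(1) is illegitimate as stated: that corollary assumes $X$ is \emph{homotopically} $N$-bounded, whereas you only know $X$ is $N$-bounded in $\cD(\cC)$; so even ignoring the symmetric shortcut, your odd-$N$ argument does not go through as written.

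Once pivotality is granted for free, the remainder of your Part~(2) argument coincides with the paper's: apply Lemma~\ref{Lem4Dim} with $\phi=\dim$ together with Lemma~\ref{Lem4}(1) to get $\dim X=\pm 2$; then Example~\ref{ExEx}(1) shows $\bR(X)$ is $\cJW_{p^n-1}$, so Theorem~\ref{ThmPiv1} reduces $\mE_{p^n-1}(X)$ (and $\mE^{p^n-1}(X)$) to a complex in one degree, and acyclicity forces that object to vanish, giving homotopical $p^n$-boundedness. Your vague workarounds in the even-$N$ case (``invertibility of $\mE_{N-2}(X)$ forces moderate growth'', ``Corollary~\ref{LemMustPiv} applied carefully in the even case'') are neither needed nor, as written, correct.
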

\begin{proof}We start by proving part (1). Let $X\in \cC$ be bounded.
By Theorem~\ref{ThmDX}, the tensor subcategory generated by $X$ is of moderate growth and therefore admits a tensor functor to $\sVec$, see \cite{Del02}. The conclusion thus follows easily from Lemma~\ref{TensorF} and Theorem~\ref{ThmSimp}.

The dimension in part (2)  follows from Lemma~\ref{Lem4Dim} applied to the categorical dimension
$$\dim: K_0(\cC)\;\to\; \mF_p$$ and Lemma~\ref{Lem4}(1).
That the objects must be homotopically bounded follows from the observation that $\mE_{p^n-1}(X)$ and $\mE^{p^n-1}(X)$ are homotopy equivalent to complexes contained in one degree, by Theorem~\ref{ThmPiv1} and Example~\ref{ExEx}(1).
\end{proof}

For the remainder of this section, we assume $\mathrm{char}(\bk)=p>0$.
The following conjecture will be proved for quasi-finite symmetric tensor categories in Section~\ref{SecPfConj}.
\begin{conjecture}\label{Conjpn}
For $\cC$ a symmetric tensor category over $\bk$, consider a strictly $N$-bounded $X\in \cC$  for $N>3$.
\begin{enumerate}
\item Either $\Sym^2 X$ or $\wedge^2 X$ is invertible;
\item $N=p^n$ for some $n\in\mZ_{>0}$;
\item $\dim X=\pm 2$;
\item $X$ is homotopically $N$-bounded.
\end{enumerate}
\end{conjecture}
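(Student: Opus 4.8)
The plan is to reduce the conjecture, in the quasi-finite case, to the structure theory of $\Ver_{p^n}$ via the characterisation of bounded objects by Frobenius-Perron dimension (Theorem~\ref{ThmFP}) and the incompressibility results for symmetric tensor categories. First I would invoke Theorem~\ref{ThmDX} (or Theorem~\ref{ThmFP}) to conclude that a strictly $N$-bounded object $X$ generates a tensor subcategory $\cC_X$ of moderate growth; by \cite{CEO1} (the proven $n=1$ case of \cite[Conjecture~1.4]{BEO}, or its iterated/quasi-finite form) $\cC_X$ admits a symmetric tensor functor to some $\Ver_{p^n}$, and since $N$-boundedness is preserved and reflected by tensor functors (Lemma~\ref{TensorF}), it suffices to classify strictly $N$-bounded objects in $\Ver_{p^n}$. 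There $\FPdim X=2\cos(\pi/N)$ must be realised by the FP-dimension of a simple object of $\Ver_{p^n}$; the known list of these dimensions (they are the $\kappa$-values attached to the $p$-adic structure, cf.\ Lemma~\ref{Lem4}(1)) forces $\FPdim X=2\cos(\pi/p^m)=\pm 2\cos(\pi/p^m)$ only for $N=p^m$, giving (2). Then $\dim X$ is computed from $\FPdim$ modulo $p$ via Lemma~\ref{Lem4Dim} and Lemma~\ref{Lem4}(1), which gives $\dim X=\pm 2$, i.e.\ (3). Homotopical $N$-boundedness, statement (4), then follows from Theorem~\ref{ThmPiv1} together with Example~\ref{ExEx}(1): once $\dim X=\pm2$ and $N=p^m$, the Jones-Wenzl idempotent $\JW_{N-1}$ exists over $\bk$ (Lukas/Lucas' theorem), so $\mE_{N-1}(X)$ and $\mE^{N-1}(X)$ are homotopy equivalent to complexes in a single degree, and their vanishing in $\cD(\cC)$ upgrades to vanishing in $\cK(\cC)$.

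For part (1), I would argue directly inside $\Ver_{p^n}$ (or via the functor $F\colon\cC_X\to\Ver_{p^n}$): the second Adams operation $\psi^2$ sends $[X]$ to $[\Sym^2 X]-[\wedge^2 X]$, and by Example~\ref{Expsi2} this difference has FP-dimension $2\cos(j\pi/N)$ for some $1\le j<N$. The sharper claim is that $j=2$ or $j=N-2$, which is exactly what makes one of $\Sym^2X$, $\wedge^2 X$ invertible. To get this I would use the explicit description of tensor squares of simple objects in $\Ver_{p^n}$ (the Clebsch-Gordan-type rule, as in \cite{BEO, BE1}): for a simple $X$ of FP-dimension $2\cos(\pi/p^m)$, the tensor square $X\otimes X$ decomposes with exactly one summand of FP-dimension $1$ (an invertible object) together with an object of FP-dimension $2\cos(2\pi/p^m)$, and the braiding splits $X\otimes X=\Sym^2X\oplus\wedge^2 X$ (for $p>2$) so that the invertible summand lands entirely in one factor. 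For $p=2$ one instead works with the non-split filtration and the Frobenius twist, comparing with Remark~\ref{RemCorFP}(3).

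The main obstacle I anticipate is the reduction step itself: Conjecture~\ref{Conjpn} is stated for an arbitrary symmetric tensor category $\cC$, not just a quasi-finite one, whereas the tools above (FP-dimension, the embedding into $\Ver_{p^n}$) genuinely require moderate growth, which in turn requires quasi-finiteness or at least that $X^\ast\otimes X$ be of moderate growth. So in full generality this would hinge on \cite[Conjecture~1.4]{BEO}, which is why the excerpt only promises a proof ``for quasi-finite symmetric tensor categories''; the honest statement is that the argument is conditional outside that class. Within the quasi-finite class, the second delicate point is bookkeeping the $p=2$ case, where $\Sym^2$ and $\wedge^2$ no longer split off $X^{\otimes 2}$ and one must track a length-three object and use the Frobenius functor in place of $\psi^2$, as flagged in Example~\ref{Expsi2} and Remark~\ref{RemCorFP}(3); I expect this to be the most calculation-heavy part, but not conceptually hard given the explicit fusion rules in $\Ver_{2^n}$.
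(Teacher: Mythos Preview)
Your reduction step has a genuine gap that makes the argument circular. You assert that, by \cite{CEO1}, a symmetric tensor category of moderate growth admits a symmetric tensor functor to some $\Ver_{p^n}$. This is not known: \cite{CEO1} only produces such a functor (to $\Ver_p$) for \emph{Frobenius-exact} categories of moderate growth, and the general statement is exactly \cite[Conjecture~1.4]{BEO}, which is open even in the quasi-finite case. You appear to believe that quasi-finiteness is what buys the embedding into $\Ver_{p^n}$; it does not. Quasi-finiteness buys the Frobenius--Perron dimension, nothing more. The paper in fact records, in the remark immediately following the conjecture, that the full statement would follow from \cite[Conjecture~1.4]{BEO}, and presents the quasi-finite proof as \emph{evidence toward} that conjecture, not a consequence of it. So your route through Clebsch--Gordan rules in $\Ver_{p^n}$ is unavailable.

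The paper's unconditional quasi-finite proof is structured oppositely and never leaves $\cC$. One first notes (Lemma~\ref{LemBEO}, via \cite[Theorem~4.79]{BEO}) that (1) implies (2), and Theorem~\ref{ThmSymm1}(2) gives (2)$\Rightarrow$(3),(4); your treatment of (3) and (4) given (2) is essentially this same step. Everything therefore reduces to (1). For (1), with $d=\FPdim X=2\cos(\pi/N)$, $s=\FPdim\Sym^2X$, $e=\FPdim\wedge^2X$ and (say) $s\ge e$, one has $e<2$, so Theorem~\ref{ThmFP} forces $e=2\cos(\pi/M)$ for some $M\ge2$. Combining $s+e=d^2$ with $s-e=2\cos(j\pi/N)$ from Example~\ref{Expsi2} yields
\[
4\cos\left(\frac{\pi}{M}\right)\;=\;4\cos^2\left(\frac{\pi}{N}\right)-2\cos\left(\frac{j\pi}{N}\right).
\]
The core of the argument is then a pure number-theoretic lemma (Lemma~\ref{LemMPI}): a Galois-conjugate estimate on the totally real algebraic integer $2\cos(\pi/M)$ shows this equation has no solution with $M\ge4$. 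Hence $M\in\{2,3\}$; Lemma~\ref{LemEK} excludes $M=2$, so $\wedge^2X$ is invertible. No fusion rules in any $\Ver_{p^n}$ enter.
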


\begin{remark}
\begin{enumerate}
\item Theorem~\ref{ThmSymm1}(2) already shows that \ref{Conjpn}(2) implies (3) and (4). We will see moreover in Lemma~\ref{LemBEO} that (1) implies (2), so we can focus on \ref{Conjpn}(1).
\item Theorem~\ref{ThmVer} will show that Conjecture~\ref{Conjpn} would follow from \cite[Conjecture~1.4]{BEO}. The validity of \cite[Conjecture~1.4]{BEO} would also imply \cite[Conjecture~8.1.7]{CEO3} that $\gd$ is always a ring homorphism for symmetric tensor categories of moderate growth, see \cite[Lemmata 8.3 and~8.5]{CEO1}. 
 If we would assume \cite[Conjecture~8.1.7]{CEO3}, we can replace $\FPdim$ by $\gd$ in the proofs below and also prove Conjecture~\ref{Conjpn}. We thus get a hierarchy
 $$\mbox{[BEO, Conjecture~1.4]}\;\Rightarrow\; \mbox{[CEO3, Conjecture~8.1.7]}\;\Rightarrow\; \mbox{Conjecture~\ref{Conjpn}}$$
  We can therefore view Theorem~\ref{Thmpn} as evidence towards \cite[Conjecture~1.4]{BEO}.
\item When $\cC$ is Frobenius exact (of moderate growth), we know that $\gd$ is a ring homomorphism by \cite{CEO1}. However, the latter also shows directly (by Theorem~\ref{ThmVer} and Lemma~\ref{TensorF}) that in Frobenius exact tensor categories, bounded objects are either zero, invertible or strictly $p$-bounded, and Conjecture~\ref{Conjpn} is satisfied.
\item Conjecture~\ref{Conjpn}(1) would yield, thanks to \cite[Theorem~4.79]{BEO}, a complete classification of the symmetric tensor categories generated by a bounded object. In particular, we have such a classification for quasi-finite symmetric tensor categories, by Theorem~\ref{Thmpn}.
\end{enumerate}
\end{remark}

\begin{theorem}\label{ThmVer}
Consider $\cC=\Ver_{p^n}$.
\begin{enumerate}
\item The generator $L_1$ is strictly $p^n$-bounded.
\item For $0\le i<n$, the objects $L_{p^i}$ and $L_{p^i+(p-2)p^{n-1}}$ (to be interpreted as $L_{(p-3)p^{n-1}}$ for $i=n-1$) are strictly $p^{n-i}$-bounded. 
\item The objects $\unit=L_0$ and $L_{(p-2)p^{n-1}}$ are invertible.
\item All other non-zero objects in $\Ver_{p^n}$ are unbounded.
\end{enumerate}
\end{theorem}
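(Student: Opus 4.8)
The plan is to reduce everything to computations of Frobenius-Perron dimensions in $\Ver_{p^n}$, invoking Theorem~\ref{ThmFP}: an object $X\in\Ver_{p^n}$ is strictly $N$-bounded if and only if $\FPdim X=2\cos(\pi/N)$, and unbounded if and only if $\FPdim X\ge 2$. Since $\Ver_{p^n}$ is a finite (hence quasi-finite) symmetric tensor category, this applies to all simple objects, and since every object decomposes into simples it suffices to treat the simples $L_j$. So the first step is to recall (from \cite{BEO}) the formula for $\FPdim L_j$ in terms of quantum numbers: the simples are indexed by a set of $j$'s and $\FPdim L_j$ is, up to the usual normalization, a value of the form $[j+1]_q$ for a suitable root of unity $q$ attached to $p^n$; more precisely the Frobenius-Perron dimensions are $2\cos(\pi k/p^n)$-type expressions coming from the $\mathfrak{sl}_2$ fusion data built into $\Ver_{p^n}$. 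Once the list of values $\{\FPdim L_j\}$ is written down, each part of the theorem becomes a matching exercise against the table $2\cos(\pi/N)=\FPdim$.

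For part (1), I would compute $\FPdim L_1$ and check it equals $2\cos(\pi/p^n)$; equivalently $L_1$ is the image of the natural $U_q(\mathfrak{sl}_2)$-type generator and its quantum dimension is exactly the relevant $2\cos(\pi/p^n)$, so Theorem~\ref{ThmFP}(1) gives strict $p^n$-boundedness. For parts (2) and (3), I would use the Frobenius/Adams-operation structure of $\Ver_{p^n}$ — concretely the tensor functors and the Frobenius twist relating $\Ver_{p^n}$ to $\Ver_{p^{n-1}}$, together with Lemma~\ref{TensorF} — to identify $L_{p^i}$ with (the image of) a generator of a $\Ver_{p^{n-i}}$-subcategory, forcing $\FPdim L_{p^i}=2\cos(\pi/p^{n-i})$ and hence strict $p^{n-i}$-boundedness; the ``shifted'' simples $L_{p^i+(p-2)p^{n-1}}$ are obtained by tensoring with the invertible object $L_{(p-2)p^{n-1}}$, which multiplies $\FPdim$ by $1$, so they have the same boundedness. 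That $L_0=\unit$ and $L_{(p-2)p^{n-1}}$ are invertible is either standard in $\Ver_{p^n}$ or follows from $\FPdim=1=2\cos(\pi/3)$ combined with the fact (Example after Proposition~\ref{PropSph2}) that $3$-bounded means invertible. For part (4), I would argue that every remaining simple $L_j$ has $\FPdim L_j\ge 2$: the values $2\cos(\pi/N)$ with $N<\infty$ lie in $[0,2)$ and, crucially, are characterized (see the remark after Theorem~\ref{ThmFP}) as the totally real algebraic integers in $[0,2)$ of maximal absolute value in their Galois orbit; one checks that the $\FPdim$ of any simple not in the list from (1)–(3) either is $\ge 2$ outright or fails this Galois-maximality condition, hence cannot be of the form $2\cos(\pi/N)$, so by Theorem~\ref{ThmFP}(2) such $L_j$ is unbounded.

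The main obstacle I anticipate is part (4): pinning down, among all $p^n-1$ (roughly) simple objects of $\Ver_{p^n}$, exactly which have $\FPdim<2$ and verifying the Galois-orbit condition for the rest. This is a genuinely arithmetic statement about the quantum numbers $[j+1]$ at a $2p^n$-th root of unity, and the bookkeeping of which indices $j$ survive under the iterated Frobenius filtration of $\Ver_{p^n}$ is delicate — one has to be careful about the boundary cases $i=n-1$ (where $L_{p^{n-1}+(p-2)p^{n-1}}$ degenerates to $L_{(p-3)p^{n-1}}$) and about the difference between $p=2$ and $p>2$. I would handle the arithmetic by working with $\kappa_{N-1}$ and Lemma~\ref{Lem4}, reducing ``$\FPdim L_j=2\cos(\pi/N)$'' to a divisibility/factorization statement about $\kappa_{p^n-1}$ over $\mathbb{F}_p$ via Lemma~\ref{Lem4}(1), which makes transparent why only the powers $p^i$ (and their invertible translates) can occur.
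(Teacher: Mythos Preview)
Your overall strategy is the paper's: reduce to Frobenius--Perron dimensions via Theorem~\ref{ThmFP}. Two corrections on the setup. First, $\Ver_{p^n}$ is \emph{not} semisimple for $n>1$, so ``every object decomposes into simples'' is false; the reduction to simples goes through Theorem~\ref{ThmSimp} instead. Second, $\FPdim L_j$ is not simply $[j+1]_q$ for arbitrary $j$: by \cite[Theorem~1.3]{BEO} the simples are (Steinberg-type) tensor products of the $L_{ap^i}$ with $0\le a<p$, and it is only for these factors that one has the clean formula $\FPdim L_{ap^i}=\sin\!\big((a+1)\pi/p^{n-i}\big)/\sin\!\big(\pi/p^{n-i}\big)$ from \cite[Corollary~4.44]{BEO}. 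The paper uses this decomposition to reduce immediately to the $L_{ap^i}$, since a tensor product of two non-invertible non-zero objects already has $\FPdim\ge 2$. Parts (1)--(3) then follow essentially as you outline.

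The genuine gap is in part~(4). Your Galois-maximality argument does no work: the Frobenius--Perron dimension of a simple in a finite tensor category is \emph{automatically} the largest real element of its Galois orbit (it is the Perron eigenvalue of a nonnegative integer matrix, and this is exactly why $\FPdim$ lands in the Jones spectrum in the first place). Hence the alternative ``fails Galois-maximality'' never occurs, and you are forced back to proving $\FPdim\ge 2$ directly for each remaining $L_{ap^i}$. Your proposed fallback via Lemma~\ref{Lem4}(1) conflates the categorical dimension in $\mathbb{F}_p$ with $\FPdim\in\mathbb{R}$; the factorisation $\kappa_{p^l-1}(x)=(x^2-4)^{(p^l-1)/2}$ over $\mathbb{F}_p$ only yields $\dim X=\pm 2$ and cannot separate the simples. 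What the paper actually does for~(4) is the elementary trigonometric estimate: for integers $j\ge 3$ and $\theta\in[0,\pi/6]$ with $j\theta\le\pi/2$ one has
\[
\frac{\sin(j\theta)}{\sin(\theta)}\;\ge\;\frac{\sin(3\theta)}{\sin(\theta)}\;=\;2\cos(2\theta)+1\;\ge\;2,
\]
applied to the $\FPdim$ formula above with $j=a+1$ and $\theta=\pi/p^{n-i}$ (together with a separate check for the boundary case $i=n-1$).
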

\begin{proof}
Part (1) is a special case of part (2). To classify bounded objects we can focus on simple objects, by Theorem~\ref{ThmSimp}. Moreover, by Theorem~\ref{ThmFP}, the tensor product of two non-invertible (and non-zero) objects has at least Frobenius-Perron dimension $2$ and is therefore unbounded.

In \cite[Theorem~1.3]{BEO}, simple objects in $\Ver_{p^n}$ are classified in terms of tensor products of the simple objects $L_{ap^i}$ with $0\le i< n$ and $0\le a <p$ (and $a<p-1$ in case $i=n-1$).
By \cite[Corollary~4.44]{BEO}, we have
\begin{equation}
\label{EqFP}
\FPdim L_{ap^i}\;=\; \frac{\sin\frac{(a+1)\pi}{p^{n-i}}}{\sin\frac{\pi}{p^{n-i}}}.
\end{equation}
Consequently, as is well-known, the only invertible objects in $\Ver_{p^n}$ are $\unit=L_0$ and the odd line $\bar{\unit}=L_{(p-2)p^{n-1}}$ in $\sVec\subset \Ver_{p^n}$ in case $p>2$, proving in particular (3). 

Equation~\eqref{EqFP} further shows that $L_{p^i}$ has Frobenius-Perron dimension $2\cos(\pi/p^{n-i})$. This object, and its tensor product with $L_{(p-2)p^{n-1}}$, are therefore strictly $p^{n-i}$-bounded by Theorem~\ref{ThmFP}, proving (2). Alternatively, part~(2) also follows from the construction in \cite{BEO} and Corollary~\ref{ThmPiv2}.

To prove part (4) it now only remains for us to show that the right-hand side of \eqref{EqFP} is at least $2$ when $a>1$, ignoring the cases $a\ge p-3$, $i=n-1$. In particular, for $p=2$ there is nothing left to prove, and we assume $p>2$. We focus on the case $i<n-1$, the remaining case $i=n-1$ being similar.
Observe therefore that for $j\in\mZ_{>2}$ and $ \theta\in[0,\pi/6]$ with $j\theta\le \pi/2$, we have
$$\frac{\sin(j\theta)}{\sin(\theta)}\;\ge\; \frac{\sin(3\theta)}{\sin(\theta)}\;=\; 2\cos(2\theta)+1\;\ge\; 2.$$
We can then apply this to equation~\eqref{EqFP} to conclude the proof.
%The right-hand side can only be smaller than $2$ if $\theta>\pi/6$. 
%
%Applying this simple observation shows that the condition
%\begin{equation}\label{CondFP}\FPdim L_i\;<\;2\end{equation}
%requires
%$$\begin{cases}
%i_1\in \{0,1,p-3,p-2\}&\\
%i_l\in \{0,1\}&\mbox{ for $l>1$.}
%\end{cases}$$
%Slightly further similar analysis shows that any factor in \eqref{EqFP} which is not $1$ is strictly bigger than $\sqrt{2}$, except the single case $l=2$, $p=2$ (and $i_l=1$), in which case it is precisely $\sqrt{2}$. Consequently, only one of the $i_l$ is permitted to deviate from $0$ (or from $0,p-2$ in case $l=1$) and only the objects mentioned in parts (1) and (2) satisfy \eqref{CondFP}. Part (3) thus follows from \autoref{ThmFP}.
%
%Part (2) is obvious since the objects are known to be invertible. 
%
%
%Part (1) follows from construction in \cite{BEO, AbEnv} and~\autoref{ThmPiv2}. Alternatively, it follows from Theorem~\ref{ThmFP}.
%
%Part (4) follows from the above, Example~\ref{ExEx} and Corollary~\ref{CorNM}.
\end{proof}

Comparing with the classification of tensor subcategories of $\Ver_{p^n}$ in \cite[Corollary~4.61]{BEO} yields the following consequence.

\begin{corollary}
Assume $p>2$. The assignment that associates to an object the tensor subcategory it generates yields a bijection between bounded objects and tensor subcategories in $\Ver_{p^n}$.
\end{corollary}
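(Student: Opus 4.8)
The plan is to combine Theorem~\ref{ThmVer} with the classification of tensor subcategories of $\Ver_{p^n}$ from \cite[Corollary~4.61]{BEO}. First I would recall that by \cite[Corollary~4.61]{BEO} the tensor subcategories of $\Ver_{p^n}$ are exactly the categories $\Ver_{p^j}$ for $0\le j\le n$, together with the $\sVec$-twisted versions $\sVec\boxtimes\Ver_{p^j}$ (equivalently $\Ver_{p^j}\vee \sVec$) for $1\le j\le n$ when $p>2$ --- so the poset of tensor subcategories is essentially two chains glued along $\sVec=\Ver_{p^1}\cap(\sVec\boxtimes\Ver_{p^0})$. The assertion to prove is that the map sending a bounded object $X$ to $\langle X\rangle$, the tensor subcategory it generates, is a bijection onto this set.

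The key steps, in order: (i) enumerate the bounded objects via Theorem~\ref{ThmVer}: these are $\unit=L_0$, the odd line $\bar\unit=L_{(p-2)p^{n-1}}$, the objects $L_{p^i}$ for $0\le i<n$, and the objects $L_{p^i+(p-2)p^{n-1}}$ for $0\le i<n$ (with the stated reinterpretation at $i=n-1$). (ii) For each, identify the tensor subcategory it generates: $L_0$ generates $\Vecc=\Ver_{p^0}$; $\bar\unit$ generates $\sVec$; $L_{p^i}$ generates $\Ver_{p^{n-i}}$ (this is built into the construction of $\Ver_{p^n}$ in \cite{BEO}, where $L_{p^i}$ is the image of the generator of $\Ver_{p^{n-i}}$ under the canonical embedding, and one sees $\Ver_{p^{n-i}}$ is the smallest tensor subcategory containing it because $\FPdim$ forces the simple constituents of tensor powers of $L_{p^i}$ to be exactly those of $\Ver_{p^{n-i}}$); and $L_{p^i+(p-2)p^{n-1}}\simeq L_{p^i}\otimes\bar\unit$ generates $\sVec\boxtimes\Ver_{p^{n-i}}$ for $i<n-1$ while for $i=n-1$ it coincides with an object already in $\sVec$-twisted $\Ver_{p}$. (iii) Check the count matches: the bounded objects are $\{L_0,\bar\unit\}\cup\{L_{p^i},L_{p^i}\otimes\bar\unit: 0\le i<n\}$, which is $2+2n$ objects but with the degeneration at $i=n-1$ collapsing the twisted ones appropriately, matching exactly the $n+1$ untwisted subcategories $\Ver_{p^j}$ ($0\le j\le n$) plus the $n$ twisted ones $\sVec\boxtimes\Ver_{p^j}$ ($1\le j\le n$). (iv) Conclude injectivity (distinct bounded objects generate distinct subcategories, since their $\FPdim$ and their membership or not in $\sVec$-twisted part distinguish them) and surjectivity (every listed subcategory arises).

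The main obstacle I expect is the bookkeeping at the boundary case $i=n-1$, where $L_{p^{n-1}+(p-2)p^{n-1}}=L_{(p-1)p^{n-1}}$ does not exist as a simple object and must be reinterpreted as $L_{(p-3)p^{n-1}}$, together with the need to verify carefully that $L_{(p-3)p^{n-1}}$ generates $\sVec\boxtimes\Ver_p$ (not something smaller or larger) --- this requires knowing precisely which simple objects appear in its tensor powers, which is where one leans on the explicit description of the fusion rules and the embedding $\Ver_p\hookrightarrow\Ver_{p^n}$ from \cite{BEO}. A secondary point is confirming that $\langle L_{p^i}\otimes\bar\unit\rangle=\sVec\boxtimes\langle L_{p^i}\rangle$ rather than merely being contained in it; this follows because $\bar\unit=(L_{p^i}\otimes\bar\unit)^{\otimes 2}\otimes(L_{p^i})^{\ast\otimes 2}$-type combinations recover both factors, but the cleanest route is to observe that $\langle L_{p^i}\otimes\bar\unit\rangle$ contains $(L_{p^i}\otimes\bar\unit)^{\otimes 2}\simeq L_{p^i}^{\otimes 2}$ hence contains $\langle L_{p^i}\rangle=\Ver_{p^{n-i}}$ and then also $\bar\unit$, giving the join. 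Everything else is a direct comparison of two finite lists.
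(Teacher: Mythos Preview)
Your overall strategy—enumerate the bounded objects via Theorem~\ref{ThmVer} and match them against the list of tensor subcategories from \cite[Corollary~4.61]{BEO}—is exactly what the paper does (the paper's proof is literally the one sentence pointing to that comparison). However, the detailed bookkeeping you propose contains errors that would make the written-out argument incorrect.

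The tensor subcategories of $\Ver_{p^n}$ (for $p>2$) are $\Vecc$, $\sVec$, and $\Ver_{p^j}^+\subset\Ver_{p^j}$ for $1\le j\le n$; for $p>3$ these are $2n+2$ distinct categories (for $p=3$ one has $\Ver_3^+=\Vecc$, $\Ver_3=\sVec$, yielding $2n$). Your labels ``$\Ver_{p^j}$ and $\sVec\boxtimes\Ver_{p^j}$'' do not make sense, since $\sVec\subset\Ver_{p^j}$ already, and your count of $2n+1$ misses one. More importantly, your claim that $\langle L_{p^i}\otimes\bar\unit\rangle=\sVec\vee\langle L_{p^i}\rangle$ is false: the square $(L_{p^i}\otimes\bar\unit)^{\otimes 2}\simeq L_{p^i}^{\otimes 2}$ lies in $\Ver_{p^{n-i}}^+$, not in all of $\Ver_{p^{n-i}}$, so your proposed step ``hence contains $\langle L_{p^i}\rangle=\Ver_{p^{n-i}}$ and then also $\bar\unit$'' fails. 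In fact $L_{p^i}\otimes\bar\unit$ has even parity and generates precisely $\Ver_{p^{n-i}}^+$ (this is forced by the subcategory classification together with its Frobenius--Perron dimension). The correct matching is therefore $L_0\leftrightarrow\Vecc$, $\bar\unit\leftrightarrow\sVec$, $L_{p^i}\leftrightarrow\Ver_{p^{n-i}}$, and $L_{p^i+(p-2)p^{n-1}}\leftrightarrow\Ver_{p^{n-i}}^+$ (for $0\le i<n$, with the stated reinterpretation at $i=n-1$); with this the two finite lists agree and the bijection follows.
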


\subsection{Special cases of Conjecture~\ref{Conjpn}}
\label{SecPfConj}

\begin{theorem}\label{Thmpn}
Conjecture~\ref{Conjpn} is valid in any symmetric tensor category which admits a symmetric tensor functor to a quasi-finite symmetric tensor category.
\end{theorem}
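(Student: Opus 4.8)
The plan is to reduce \textbf{Conjecture~\ref{Conjpn}} for a category $\cC$ admitting a symmetric tensor functor $F\colon\cC\to\cD$, with $\cD$ quasi-finite symmetric, to an assertion purely inside $\cD$, and then prove that assertion by the dimension arithmetic already developed. First I would observe that by Lemma~\ref{TensorF} an object $X\in\cC$ is strictly $N$-bounded if and only if $F(X)\in\cD$ is strictly $N$-bounded; moreover $F$ commutes with $\Sym^2$ and $\wedge^2$ (and with the Frobenius twist $\Fr$ in characteristic $2$), and $F$ is faithful and exact, so $\Sym^2X$ (resp.\ $\wedge^2X$) is invertible in $\cC$ if and only if $F(\Sym^2X)$ (resp.\ $F(\wedge^2X)$) is invertible in $\cD$ — using that an object of a tensor category is invertible iff its image under a tensor functor is, which follows since tensor functors detect simplicity and $\FPdim=1$. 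Likewise $\dim X=\pm2$ in $\cC$ iff $\dim F(X)=\pm2$ in $\cD$, and homotopical $N$-boundedness transfers along $F$ by Theorem~\ref{ThmPiv1} combined with Example~\ref{ExEx}(1). Hence all four clauses (1)--(4) of the conjecture for $X$ in $\cC$ follow from the same four clauses for $F(X)$ in the quasi-finite category $\cD$, so it suffices to prove \textbf{Conjecture~\ref{Conjpn}} when $\cC$ itself is quasi-finite.

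So the core of the proof is the quasi-finite case. Here I would let $X$ be strictly $N$-bounded with $N>3$; by Theorem~\ref{ThmSimp} we may assume $X$ simple. By Theorem~\ref{ThmFP}, $\FPdim X=2\cos(\pi/N)$, and by Example~\ref{Expsi2} applied to the second Adams operation (or to $\Fr$ when $p=2$) we get
$$\FPdim(\Sym^2X)-\FPdim(\wedge^2X)\;=\;2\cos\!\left(\frac{j\pi}{N}\right)$$
for some $1\le j<N$. The decisive additional input is the factorisation of Chebyshev polynomials over $\mF_p$ in Lemma~\ref{Lem4}(1) together with Lemma~\ref{Lem4Dim} applied to the categorical dimension $\dim\colon K_0(\cC)\to\mF_p$: since $X$ is $N$-bounded we get $\kappa_{N-1}(\dim X)=0$ in $\mF_p$, i.e.\ $\dim X$ is a root of $\kappa_{N-1}$. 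I would then argue that the surviving $j$ in the Adams-operation identity must be $j=2$ or $j=N-2$ — this is the "(the proof of) Theorem~\ref{Thmpn}" claim flagged in Example~\ref{Expsi2} — by combining the numerical constraint that $\FPdim(\Sym^2X)$ and $\FPdim(\wedge^2X)$ are non-negative algebraic integers summing to $(\FPdim X)^2=4\cos^2(\pi/N)<4$ with the fact that they are themselves Frobenius-Perron dimensions in $\cC$, hence at least $1$ unless zero; the possibilities $2\cos(j\pi/N)$ with $2\le j\le N-2$ then force $j\in\{2,N-2\}$ because in both cases one of $\Sym^2X,\wedge^2X$ has $\FPdim$ equal to $\FPdim(\wedge^2X)+2\cos(2\pi/N)$ or symmetric, and the only way to land on a value $<1$ (necessarily $0$) is $j=2$ giving $\FPdim\wedge^2X=0$ or $j=N-2$ giving $\FPdim\Sym^2X=0$. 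Since $\FPdim$ is faithful on a quasi-finite category (zero $\FPdim$ forces the zero object), one of $\wedge^2X$, $\Sym^2X$ vanishes; the \emph{other} then has $\FPdim=4\cos^2(\pi/N)-0$, and I would still need $4\cos^2(\pi/N)=1$, i.e.\ $N=3$ — but $N>3$, contradiction — \emph{unless} one reinterprets: actually the correct reading is that one of them is \emph{invertible}, i.e.\ has $\FPdim=1$, which happens precisely when $4\cos^2(\pi/N)-1=0$ is \emph{not} required; rather $j=2$ gives $\FPdim\Sym^2X=1$ (and $\FPdim\wedge^2X=4\cos^2(\pi/N)-1$) while $j=N-2$ gives $\FPdim\wedge^2X=1$. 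So I would pin down that the genuine dichotomy is $\FPdim\Sym^2X=1$ or $\FPdim\wedge^2X=1$, establishing clause (1).

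From clause (1), clause (2) ($N=p^n$) follows by invoking Lemma~\ref{LemBEO} (the remark after Conjecture~\ref{Conjpn} states (1)$\Rightarrow$(2) is proved there, via \cite[Theorem~4.79]{BEO}); and then Theorem~\ref{ThmSymm1}(2) — which says exactly that $p^n$-boundedness forces $\dim X=\pm2$ and homotopical $p^n$-boundedness — delivers clauses (3) and (4). So the logical skeleton is: transfer to the quasi-finite case $\Rightarrow$ use Adams operations and $\FPdim$-positivity to force $j\in\{2,N-2\}$ and hence one of $\Sym^2X,\wedge^2X$ invertible (clause 1) $\Rightarrow$ Lemma~\ref{LemBEO} gives $N=p^n$ (clause 2) $\Rightarrow$ Theorem~\ref{ThmSymm1}(2) gives clauses 3 and 4. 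The main obstacle I anticipate is the middle step: ruling out all intermediate values of $j$ cleanly. The inequality $\FPdim(\Sym^2X)+\FPdim(\wedge^2X)=4\cos^2(\pi/N)$ is only barely above the constraint that both summands be $\ge1$ when $N$ is large (the sum tends to $4$, so both can be near $2$), so the argument cannot be purely about the total; it must use that $2\cos(j\pi/N)$ is the \emph{difference}, hence one summand is $2\cos^2(\pi/N)\pm\cos(j\pi/N)=2\cos^2(\pi/N)\pm\cos(j\pi/N)$ and one needs a Galois-conjugacy or minimal-polynomial argument — exactly the characterisation of $2\cos(\pi/N)$ as a totally real algebraic integer of maximal absolute value in its Galois orbit, noted in the remark after Theorem~\ref{ThmFP} — to show that if neither $\Sym^2X$ nor $\wedge^2X$ is invertible then some Galois conjugate of $\FPdim X$ would exceed $\FPdim X$ in absolute value, contradicting Frobenius-Perron. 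Making that Galois argument precise, keeping track of the characteristic-$2$ exception where $\Fr$ replaces $\psi^2$, is where the real work lies.
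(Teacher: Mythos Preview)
Your logical skeleton is correct and matches the paper: reduce to the quasi-finite case via Lemma~\ref{TensorF}, prove clause~(1) there, and then deduce clauses~(2)--(4) from Lemma~\ref{LemBEO} and Theorem~\ref{ThmSymm1}(2). The gap is entirely in the core step, establishing clause~(1).

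Your attempt to pin down $j\in\{2,N-2\}$ directly from the constraints $s+e=4\cos^2(\pi/N)$ and $s-e=2\cos(j\pi/N)$ does not work as written. The arithmetic is muddled (you oscillate between concluding that one of $s,e$ is $0$ and that one of them is $1$; in fact $j=2$ gives $e=1$, not $s=1$ and not $e=0$), and more seriously, the inequality ``both summands are $\ge 1$ unless zero'' is far too weak: for large $N$ the sum is close to $4$, so both $s,e$ could comfortably exceed $1$ for many intermediate values of $j$. You recognise this yourself in the final paragraph and correctly diagnose that a Galois argument is needed, but you do not supply one.

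The paper's missing idea is this: rather than constraining $j$, apply Theorem~\ref{ThmFP} a second time, now to $\wedge^2X$ (or $\Sym^2X$, whichever has smaller $\FPdim$). Since the smaller of $s,e$ is at most $2\cos^2(\pi/N)<2$, that object is itself bounded, hence $e=2\cos(\pi/M)$ for some integer $M\ge 2$. Subtracting the two equations yields
\[
4\cos^2\!\left(\frac{\pi}{N}\right)\;=\;4\cos\!\left(\frac{\pi}{M}\right)+2\cos\!\left(\frac{j\pi}{N}\right),
\]
and the paper proves as a standalone number-theoretic lemma (Lemma~\ref{LemMPI}) that this equation has no solutions with $M\ge 4$: one picks a Galois automorphism sending $2\cos(\pi/M)$ to $2\cos((M\pm1)\pi/M)\approx -2$, making the right-hand side negative, while the left-hand side is totally positive. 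Thus $M\in\{2,3\}$; the case $M=2$ gives $\wedge^2X=0$, forcing $X$ invertible by Lemma~\ref{LemEK}, a contradiction, so $M=3$ and $\wedge^2X$ is invertible. The introduction of the second parameter $M$ via Theorem~\ref{ThmFP} is the step you are missing; once you have it, the Galois argument becomes clean and self-contained rather than entangled with $j$.
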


We start the proof with some preliminary results. Throughout we let $\cC$ be a symmetric tensor category and $X\in\cC$.

\begin{lemma}\label{LemBEO}
If $\wedge^2 X$ or $\Sym^2 X$ is invertible, then either $X$ is invertible, unbounded or strictly $p^n$-bounded for some $n\in\mZ_{>0}$.
\end{lemma}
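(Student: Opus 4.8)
The plan is to reduce the claim to the classification of symmetric tensor categories generated by a ``$2$-dimensional'' object, namely \cite[Theorem~4.79]{BEO}. If $X$ is unbounded or invertible there is nothing to prove, so suppose $X$ is bounded and not invertible. Then $X$ is simple by Theorem~\ref{ThmSimp}, and $X\neq 0$ since $\wedge^2 0 = \Sym^2 0 = 0$ is not invertible. Say $X$ is strictly $N$-bounded. The cases $N=2$ (which forces $X=0$) and $N=3$ (which forces $X$ invertible) do not occur, so $N>3$, and it remains to prove that $N$ is a power of $p$.

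First I would record that $X$ is self-dual up to an invertible twist. Suppose $\chi:=\wedge^2 X$ is invertible; the case of $\Sym^2 X$ invertible is identical after replacing the map below by the canonical epimorphism $X^{\otimes 2}\tto\Sym^2 X$. The canonical epimorphism $X^{\otimes 2}\tto\wedge^2 X=\chi$ is nonzero, hence so is the morphism $X\otimes\chi^{-1}\to X^\ast$ obtained from it by adjunction; since tensoring with an invertible object preserves simplicity, both its source and its target are simple, so it is an isomorphism. Thus $X^\ast\simeq X\otimes\chi^{-1}$, hence $X^{\ast\ast}\simeq X$, and therefore every iterated dual $X^{(i)}$, $i\in\mZ$, is isomorphic either to $X$ or to $X\otimes\chi^{-1}$; in particular $\ell(X^{(i_1)}\otimes\cdots\otimes X^{(i_k)})=\ell(X^{\otimes k})$ for every tuple $i_1,\dots,i_k\in\mZ$.

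Next I would apply \cite[Theorem~4.79]{BEO}, together with the classification of tensor subcategories of the Verlinde categories in \cite[Corollary~4.61]{BEO}, to the tensor subcategory $\cC_X\subseteq\cC$ generated by $X$: the upshot is that $\cC_X$ embeds, via a symmetric tensor functor $F$, into some higher Verlinde category $\Ver_{p^m}$. Since $X$ is bounded, so is $F(X)$ by Lemma~\ref{TensorF}; moreover $F(X)$ is non-invertible, because $F$ is faithful and exact, hence reflects isomorphisms, while $X$ is non-invertible. By the classification of bounded objects in $\Ver_{p^m}$ in Theorem~\ref{ThmVer}, $F(X)$ is then strictly $p^{m'}$-bounded for some $m'\in\mZ_{>0}$; applying Lemma~\ref{TensorF} once more, $X$ is strictly $p^{m'}$-bounded, so $N=p^{m'}$, as wanted.

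I expect the main obstacle to be the invocation of \cite[Theorem~4.79]{BEO}: one has to match its hypotheses with ``$\wedge^2 X$ or $\Sym^2 X$ invertible'' (the self-duality-up-to-twist of the second paragraph is what makes this possible), and, should that result carry a moderate-growth hypothesis, to first supply it. Moderate growth of $\cC_X$ can be extracted from the invertibility of $\chi$: the defining short exact sequence $0\to\sum_{i+j=k-2}X^{\otimes i}\otimes\chi\otimes X^{\otimes j}\to X^{\otimes k}\to\Sym^k X\to 0$ presents its leftmost term as a quotient of $(X^{\otimes(k-2)}\otimes\chi)^{\oplus(k-1)}$, reducing a bound on $\ell(X^{\otimes k})$ to one on $\ell(\Sym^k X)$, which is the delicate point. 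Alternatively, if one wishes to avoid \cite[Theorem~4.79]{BEO}, moderate growth combined with Theorem~\ref{ThmDX} gives $\bD(X)=2\cos(\pi/N)$, and the genuinely hard part is then to exclude, by a Frobenius-twist descent in the spirit of \cite{BEO}, the values of $N$ that are not powers of $p$.
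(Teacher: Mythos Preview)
Your approach is essentially the paper's: invoke \cite[Theorem~4.79]{BEO} to land in some $\Ver_{p^m}$, then apply Theorem~\ref{ThmVer} together with Lemma~\ref{TensorF}. The paper's proof is just two lines doing exactly this.

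The one place where you diverge, and where there is a small gap, is the reduction from ``$\Sym^2 X$ invertible'' to the hypothesis of \cite[Theorem~4.79]{BEO}, which is specifically ``$\wedge^2 X$ invertible''. Your self-duality-up-to-twist observation is correct but does not by itself give you $\wedge^2 X$ invertible, so it does not let you feed $X$ into that theorem. The paper's fix is cleaner than anything you sketch: when $p>2$, pass to $X\boxtimes\bar{\unit}$ in $\cC\boxtimes\sVec$; since the braiding on $\bar{\unit}^{\otimes 2}$ is $-1$, one has $\wedge^2(X\boxtimes\bar{\unit})\simeq(\Sym^2 X)\boxtimes\unit$, which is invertible, and now \cite[Theorem~4.79]{BEO} applies to $X\boxtimes\bar{\unit}$. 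Lemma~\ref{TensorF} (for the projection $\cC\boxtimes\sVec\to\cC$ or the inclusion) then transfers the conclusion back to $X$.

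Your second paragraph (self-duality up to a twist) and the moderate-growth discussion are therefore unnecessary: \cite[Theorem~4.79]{BEO} does not require moderate growth as an input, and the self-duality plays no role once you use the $\sVec$ trick.
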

\begin{proof}
We can easily reduce to the case where $\wedge^2 X$ is invertible, by considering $\cC\boxtimes\sVec$ when $p>2$. In that case, the result follows from \cite[Theorem~4.79]{BEO} and Theorem~\ref{ThmVer}.
\end{proof}

\begin{lemma}\label{LemEK}
If $\wedge^2 X=0$ or $\Sym^2 X=0$, then $X$ is invertible.
\end{lemma}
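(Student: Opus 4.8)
The plan is to show that $\wedge^2 X = 0$ forces $X$ to be invertible (the case $\Sym^2 X = 0$ being handled by passing to $\cC \boxtimes \sVec$ when $p > 2$, or by an identical argument after twisting the symmetry, so that WLOG $\wedge^2 X = 0$). The hypothesis $\wedge^2 X = 0$ says exactly that the symmetry $\sigma_{X,X}$ acts as the identity on $X^{\otimes 2}$, i.e. $X$ carries a trivial (symmetric) self-braiding. The idea is to exploit this: for an object with trivial self-braiding, the symmetric powers $\Sym^n X$ coincide with the full tensor powers $X^{\otimes n}$, so that $\gd(X) = \gd(\Sym^2 X)^{1/2}$ and more importantly the subcategory generated by $X$ looks like representations of a one-dimensional object.

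First I would recall that $\wedge^2 X = 0$ implies, by the defining short exact sequence for symmetric powers, that $\Sym^n X = X^{\otimes n}$ for all $n$, and similarly (using the divided-power description) $\wedge^n X = 0$ for $n \geq 2$. Then I would invoke the structure theory: an object all of whose exterior powers vanish above degree $1$ generates a tensor subcategory on which there is a fiber functor, or more directly one can use that such an $X$ must have $[X]$ satisfying $[X]^{\otimes 2}$ behaving multiplicatively. The cleanest route is probably via Theorem~\ref{ThmSimp} together with dimension bookkeeping: if $X$ is bounded then $X$ is simple, and then I would compute $\End(X^{\otimes 2}) $ or use $\bD(X)$. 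Since $\sigma_{X,X} = \id$, we get $\bD(X)^2 = \gd(X^\ast \otimes X) = \gd(\Sym^2(X^\ast\otimes X))^{1/2}\cdots$; more simply, $\wedge^2 X = 0$ gives that $X \otimes X$ is again "as large as possible", and iterating, $X^{\otimes n}$ has length growing only if $X$ is not invertible — but I must show length does \emph{not} grow.

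The key step, and the main obstacle, is to rule out non-invertible $X$ with $\wedge^2 X = 0$. Here I expect to need the classification-type input that lies behind Lemma~\ref{LemBEO} and the higher Verlinde categories, or alternatively the result of Etingof--Kildetoft (the "EK" in the citation list) that an object with $\wedge^2 X = 0$ in a symmetric tensor category is automatically invertible — this is essentially the statement that the only "purely even, rank one" objects are the invertible ones. Concretely I would argue: $\wedge^2 X = 0$ means the subobject generated behaves like $\cO$-modules of a point; by \cite[Theorem~4.79]{BEO} (or the envelope results in \cite{AbEnv}) the tensor subcategory generated by $X$ embeds into some $\Ver_{p^n}$ (or $\sVec$), and inside $\Ver_{p^n}$ one checks directly from the list of simples (using \eqref{EqFP}) that the only simple object with vanishing $\wedge^2$ is an invertible one — indeed $\wedge^2 L_{ap^i} = 0$ forces $\FPdim L_{ap^i} \le 1$. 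Combined with Theorem~\ref{ThmVer} this pins down $X \in \{\unit, \bar\unit\}$ after applying the generating tensor functor, hence $X$ is invertible in $\cC$ by faithfulness of tensor functors. The delicate point will be making the reduction to $\Ver_{p^n}$ rigorous without circularity, since Conjecture~\ref{Conjpn} itself is only proven under a quasi-finiteness hypothesis; but for \emph{this} lemma the hypothesis $\wedge^2 X = 0$ is strong enough that the subcategory generated by $X$ is automatically finite (its Frobenius--Perron / growth dimension is controlled), so the reduction is available unconditionally.
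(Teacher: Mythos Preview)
Your proposal is not a proof but a sequence of heuristics, and the main line you settle on has a real gap. You want to invoke \cite[Theorem~4.79]{BEO} to embed the subcategory generated by $X$ into some $\Ver_{p^n}$, but that theorem (and Lemma~\ref{LemBEO} which packages it) takes as hypothesis that $\wedge^2 X$ is \emph{invertible}. The object $0$ is not invertible, so $\wedge^2 X = 0$ does not place you in the scope of that result. Your patch---``the subcategory generated by $X$ is automatically finite''---is asserted, not argued, and proving it is essentially equivalent to proving the lemma itself: once you know the subcategory is finite you are very close to knowing $X$ is invertible, and conversely. There is also a real risk of circularity in leaning on the structural results around $\Ver_{p^n}$, several of which take for granted exactly this kind of ``rank one implies invertible'' fact. (Incidentally, \cite{EK} is Etingof--Kannan, not Etingof--Kildetoft.)

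The paper's route, deferred to the proof of \cite[Lemma~5.1]{EK}, is completely elementary and uses no classification. The point is that $\wedge^2 X=0$ means $\sigma_{X,X}=\id_{X^{\otimes 2}}$, and this is inherited by every subquotient of $X$; but a direct look at the associated graded of $Y\otimes Y$ for a length-two object $Y$ with composition factors $L_1,L_2$ shows that $\sigma_{Y,Y}$ must swap the summands $L_1\otimes L_2$ and $L_2\otimes L_1$, which cannot equal the identity unless $L_1\otimes L_2=0$. Hence any $Y$ with $\wedge^2 Y = 0$ has length $\le 1$. Since $\sigma_{X^*,X^*}=\id$ as well, the same applies to $X^*\otimes X$, forcing $X^*\otimes X \cong \unit$. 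No reduction to $\Ver_{p^n}$, no growth-dimension bookkeeping, and no finiteness hypothesis is needed.
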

\begin{proof}
This follows from the proof of \cite[Lemma~5.1]{EK}.
\end{proof}

\begin{lemma}\label{LemMPI}
There are no solutions to
\begin{equation}\label{EqNoSol}
4\cos^2\left(\frac{\pi}{N}\right)\;=\; 4\cos\left(\frac{\pi}{M}\right)\,+\, 2\cos\left(j\frac{\pi}{N}\right),
\end{equation}
for $M\in\mZ_{\ge 4}$, $N\in\mZ_{>1}$ and $j\in\mZ$.
\end{lemma}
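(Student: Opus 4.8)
The plan is to rule out equation \eqref{EqNoSol} by a number-theoretic argument based on the algebraic structure of the quantities $2\cos(\pi/k)$. The key observation is that all three terms are algebraic integers, so I would analyze the equation inside the ring of algebraic integers, using Galois conjugation to produce a contradiction. Recall from the remark after Theorem~\ref{ThmFP} that $2\cos(\pi/M)$ is the totally real algebraic integer of \emph{maximal} absolute value in its Galois orbit, lying in $[0,2)$; similarly $2\cos(\pi/N)\in[0,2)$ and $|2\cos(j\pi/N)|\le 2$. The right-hand side of \eqref{EqNoSol} therefore lies in $(-2-2,\,4+2)$, but more importantly the left-hand side is $4\cos^2(\pi/N) = 2 + 2\cos(2\pi/N)$, which lies in $[0,4)$.

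First I would rewrite \eqref{EqNoSol} as
$$2 + 2\cos\!\left(\tfrac{2\pi}{N}\right) \;=\; 4\cos\!\left(\tfrac{\pi}{M}\right) + 2\cos\!\left(\tfrac{j\pi}{N}\right),$$
and dispose of small cases by hand: for $N=2$ the left side is $0$, forcing $4\cos(\pi/M) = -2\cos(j\pi/2)\le 2$, and checking $M\ge 4$ gives $4\cos(\pi/M)\ge 2\sqrt2 > 2$, a contradiction; $N=3$ and $N=4$ can be handled similarly by hand using $4\cos(\pi/M)\ge 2\sqrt2$. So assume $N\ge 5$. Then I would apply a Galois automorphism $\gamma$ of the relevant cyclotomic field: choosing $\gamma$ sending the primitive root $\zeta_{2M}$ to a conjugate making $2\cos(\pi/M)$ attain a value close to $2$ of the \emph{opposite} extreme — i.e.\ choosing a conjugate of $2\cos(\pi/M)$ that is negative and as close to $-2$ as possible — while tracking how $2\cos(2\pi/N)$ and $2\cos(j\pi/N)$ transform. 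After applying $\gamma$, the term $4\cos(\pi/M)$ becomes a real number $\gamma(4\cos(\pi/M))$ with $|\gamma(4\cos(\pi/M))|\le 4\cos(\pi/M)<4$; meanwhile the conjugate of the left-hand side $2+2\cos(2\pi/N)$ remains of the form $2 + 2\cos(2\pi k/N)$ for some $k$ coprime to $N$, hence in $[0,4)$, and the conjugate of $2\cos(j\pi/N)$ stays in $[-2,2]$. Comparing these bounds on a suitably chosen conjugate (and, if necessary, on two different conjugates) should pin down $4\cos(\pi/M)$ to lie in a narrow window, eventually forcing $M\le 3$, contradicting $M\ge 4$.

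The cleanest version, which I would pursue first, is to work in $\mathbb{Q}(\zeta_{2\,\mathrm{lcm}(M,N)})$ and choose $\gamma$ simultaneously: send $2\cos(\pi/M)\mapsto 2\cos(\pi(2t+1)/M)$ with $(2t+1)$ chosen so this conjugate is $\le -2\cos(\pi/M)\cdot\cos(\text{small})$, i.e.\ very negative, while $2\cos(2\pi/N)\mapsto 2\cos(2\pi s/N)$ stays $\ge -2$. Then the left side of the conjugated equation is $\ge 0$ but the right side is $\le \gamma(4\cos(\pi/M)) + 2 < -4\cos(\pi/M)\cdot c + 2$ for an explicit constant $c$ bounded away from $0$ (using a Dirichlet-type estimate that the orbit of $2\cos(\pi/M)$ contains a conjugate with value $\le -2\cos(3\pi/M)$ or the like). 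For $M\ge 7$ this already gives right side $<0\le$ left side. The finitely many residual cases $M\in\{4,5,6\}$ (and the small $N$ not covered above) I would check directly: there $4\cos(\pi/M)\in\{2\sqrt2,\,1+\sqrt5,\,2\sqrt3\}$, and \eqref{EqNoSol} becomes an explicit algebraic identity in $2\cos(\pi/N)$ that one falsifies by minimal-polynomial / degree considerations, e.g.\ by noting the left side is in $\mathbb{Q}(\cos(2\pi/N))$ while $4\cos(\pi/M)$ has degree $2$ over $\mathbb{Q}$ and one tracks field extensions.

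The main obstacle I anticipate is making the Galois-conjugation step fully uniform in $N$ and $j$: the term $2\cos(j\pi/N)$ and the left-hand term $2+2\cos(2\pi/N)$ are conjugate-linked (both live in the cyclotomic field of $N$, or $2N$), so one cannot move them independently, and for $j$ even versus $j$ odd the relevant field is $\mathbb{Q}(\zeta_N)$ versus $\mathbb{Q}(\zeta_{2N})$. I would handle this by separating the parity of $j$ and, in each case, exploiting that a primitive $N$-th (resp.\ $2N$-th) root has many conjugates so that one can choose $\gamma$ fixing the sign behaviour of $2+2\cos(2\pi/N)$ (keeping it nonnegative) while freely selecting the most negative conjugate of $2\cos(\pi/M)$; the coprimality $\gcd(M,\cdot)$ issues only shrink, never obstruct, the available conjugates. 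The remaining genuinely finite check is bounded and routine.
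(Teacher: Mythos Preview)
Your strategy is essentially the paper's: apply a Galois automorphism to drive the right-hand side negative while the left-hand side stays nonnegative. The paper's execution is much cleaner because of one observation you miss: $4\cos^2(\pi/N)$ is the square of a totally real algebraic integer, so \emph{every} Galois conjugate of the left-hand side is automatically $\ge 0$, and likewise every conjugate of $2\cos(j\pi/N)$ lies in $[-2,2]$. There is therefore no need to control the $N$-dependent terms, to split into small-$N$ cases, or to track the parity of $j$; one simply picks $l$ with $\gcd(l,2M)=1$ (the paper takes $l=M-1$ for $M$ even, $l=M-2$ for $M$ odd) so that $4\cos(l\pi/M)<-2$, forcing the conjugated right-hand side below $0$.

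That said, your proposed ``direct check'' at $M=5$ cannot succeed, because the lemma as stated is \emph{false}: $(N,M,j)=(5,5,3)$ is a solution. With $\phi=(1+\sqrt5)/2=2\cos(\pi/5)$ one has $4\cos^2(\pi/5)=\phi^2=\phi+1$, while $4\cos(\pi/5)+2\cos(3\pi/5)=2\phi-\phi^{-1}=2\phi-(\phi-1)=\phi+1$. The paper's argument shares this gap at $M=5$: the only nontrivial Galois conjugate of $4\cos(\pi/5)$ is $1-\sqrt5\approx-1.24$, and adding $2$ leaves the upper bound on the conjugated right-hand side positive. In the application (proof of Theorem~\ref{Thmpn}) the standing assumption $s\ge e$ forces $2\cos(j\pi/N)\ge 0$, which rules out this triple; the lemma requires some such extra hypothesis.
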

\begin{proof}
By Lemma~\ref{Lem4}(2), the algebraic number $2\cos(\pi/N)$ is totally real. Hence, for any element in the absolute Galois group $\theta\in G:=\mathrm{Gal}(\bar{\mQ}:\mQ)$, its action on left-hand side of~\eqref{EqNoSol} a positive number. For any $l\in\mZ$ with $\gcd(l,2M)=1$, there exists $\theta\in G$ with the property that
$$2\cos\left(\frac{\pi}{M}\right)=\zeta_{2M}+\zeta_{2M}^{-1}\;\,\mapsto\;\, \zeta_{2M}^l+\zeta_{2M}^{-l}=2\cos\left(l\frac{\pi}{M}\right),$$
with $\zeta_k=\exp(2\pi \iota/k)$ a primitive $k$-th root of unity. If $M$ is even, we take $l=M-1$, if $M$ is odd, we take $l=M-2$. 

We consider the case $M$ even, the case odd being almost identical. Then $\theta$ sends the right-hand side of \eqref{EqNoSol} to
$$4\cos\left(\frac{M-1}{M}\pi\right)+2\cos(\alpha)$$
for some $\alpha$. Now by assumption $M\ge 4$, hence the above is bounded from above by
$$4\cos(3\pi/4)+2\;=\;2(1-\sqrt{2})\;<\;0.$$
This shows the right-hand side is not always sent to a positive number by elements of $G$ and equation~\eqref{EqNoSol} cannot hold.
\end{proof}

\begin{proof}[Proof of Theorem~\ref{Thmpn}]
The theorem reduces, by Lemma~\ref{TensorF}, to quasi-finite tensor categories, which we focus on henceforth.
By Lemma~\ref{LemBEO} and Theorem~\ref{ThmSymm1}, it suffices to show that when $X$ is strictly $N$-bounded for $N>3$, then either $\wedge^2 X$ or $\Sym^2 X$ is invertible.

We set
$$d=\FPdim X=2\cos(\pi/N),\quad s=\FPdim \Sym^2 X,\;\mbox{and}\quad e=\FPdim \wedge^2 X.$$
In particular, $d^2=s+e$. We assume that $s\ge e$, the other case can be proved identically. It then follows that $e<2$ and, by Theorem~\ref{ThmFP}, we have $e=2\cos(\pi/M)$ for some integer $M\ge 2$. Furthermore, $s-e=2\cos(j\pi/N)$ for some $j\in\mZ$, by Example~\ref{Expsi2}. Subtracting the expressions for $s\pm e$ shows
$$4\cos\left(\frac{\pi}{M}\right)\;=\; 4\cos^2\left(\frac{\pi}{N}\right)-2\cos\left(j\frac{\pi}{N}\right).$$
By Lemma~\ref{LemMPI}, we find $M\in\{2,3\}$. If $M=2$, then  $\wedge^2X=0$, which would imply that $X$ is invertible by Lemma~\ref{LemEK}. Hence $M=3$, showing indeed that $\wedge^2 X$ is invertible.
\end{proof}

\begin{remark}\label{RemSym}
Consider $X\in \cC$ with $\wedge^2 X$ invertible. Since \cite[Theorem~4.79]{BEO} allows us to reduce to $\Ver_{p^n}$, we know that $X$ is strictly $N$-bounded for
$$N\,=\,2+\sup\{n\in\mN\,|\,\Sym^n X\not=0\}.$$ 
If $\unit\simeq \wedge^2 X$ and can be identified with $\co_{X}$, this observation also follows directly from Theorem~\ref{ThmConjP}, since then
$$\mV^n(X)\;=\;H_0(\mE^n(X))\;\simeq\;\Sym^n X.$$
\end{remark}

We conclude this section by proving Conjecture~\ref{Conjpn} without finiteness restrictions, for small $N$.

\begin{lemma}\label{Lemp}
Let $\cC$ be a symmetric tensor category over a field $\bk$.
\begin{enumerate}
\item If $X\in\cC$ is strictly $4$-bounded, then $\mathrm{char}(\bk)=2$ and $\wedge^2X=\unit$.
\item If $X\in\cC$ is strictly $5$-bounded, then $\mathrm{char}(\bk)=5$ and either $\Sym^2X$ or $\wedge^2X$ is invertible.
\item $X\in\cC$ cannot be strictly $6$-bounded.
\end{enumerate}
\end{lemma}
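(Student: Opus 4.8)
The plan is to treat the three cases in parallel, using the Frobenius–Perron dimension bookkeeping developed for strictly $N$-bounded objects together with the exterior/symmetric square analysis from the proof of Theorem~\ref{Thmpn}. For a strictly $N$-bounded $X$ with $N\in\{4,5,6\}$, Theorem~\ref{ThmConjP} and Lemma~\ref{Lem4Dim} (applied to the Adams operation $\psi^2$ composed with, say, a tensor functor to a quasi-finite category when available, or directly to $\gd$-type invariants) pin down $\FPdim X=2\cos(\pi/N)$, so $d^2=s+e$ with $d=2\cos(\pi/N)$, $s=\FPdim\Sym^2X$, $e=\FPdim\wedge^2X$. First I would reduce to the quasi-finite case via Lemma~\ref{TensorF}; but note the statement here has \emph{no} finiteness hypothesis, so the honest route is the one hinted at in Remark~\ref{RemSym} and the last paragraph of the section: for small $N$ the relevant Temperley–Lieb / Jones–Wenzl data is rigid enough that one does not need to pass through $\Ver_{p^n}$.

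For part (1): $N=4$ forces $d=\sqrt2$, so $d^2=2=s+e$ with $s\ge e\ge 0$. By Theorem~\ref{ThmFP}-type reasoning $e=2\cos(\pi/M)$ for some $M\ge 2$ and $s-e=2\cos(j\pi/4)$. If $e=0$ then $\wedge^2X=0$, so $X$ is invertible by Lemma~\ref{LemEK}, contradicting strict $4$-boundedness; hence $e>0$. The equation $4\cos(\pi/M)=2-2\cos(j\pi/4)$ combined with Lemma~\ref{LemMPI} (or a direct Galois argument, since the only cases are $M\in\{2,3\}$, and $M=2$ is excluded) gives $M=3$, i.e.\ $\wedge^2X$ is invertible with $\FPdim\wedge^2X=1$; strict $4$-boundedness then forces $\Sym^nX=0$ for $n\ge 3$ but $\Sym^2X\ne 0$, and by Remark~\ref{RemSym} combined with Lemma~\ref{LemBEO} this situation only occurs when $\wedge^2X\simeq\unit$ and $N=p^n$, forcing $p=2$. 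For part (2): $N=5$ gives $d=(1+\sqrt5)/2$, $d^2=d+1$, so $s+e=\varphi+1$; the same dichotomy ($e=0\Rightarrow$ invertible, contradiction) plus Lemma~\ref{LemMPI} forces $\wedge^2X$ (or $\Sym^2X$) invertible, and then Lemma~\ref{LemBEO} gives $N=p^n$, forcing $p=5$. For part (3): $N=6$ gives $d=\sqrt3$, $d^2=3=s+e$; run the same argument to conclude $\wedge^2X$ or $\Sym^2X$ is invertible, whence $N=p^n$ by Lemma~\ref{LemBEO} — but $6$ is not a prime power, a contradiction, so no strictly $6$-bounded object exists.

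The main obstacle is justifying the step ``$\wedge^2X$ invertible $\Rightarrow N=p^n$'' \emph{without} the quasi-finiteness assumption, since Lemma~\ref{LemBEO} as stated already delivers the trichotomy invertible / unbounded / strictly $p^n$-bounded for any symmetric tensor category (it reduces to $\Ver_{p^n}$ via \cite[Theorem~4.79]{BEO}), so in fact this is not an obstacle at all once Lemma~\ref{LemBEO} is in hand. The real content to be careful about is the dimension arithmetic: establishing $e=2\cos(\pi/M)$ for an \emph{integer} $M\ge 2$ requires knowing $\wedge^2X$ is itself bounded, which follows because in the relevant range $\wedge^2X$ is (up to the invertible twist) a subquotient of a bounded object and Theorem~\ref{ThmFP} applies after reducing to quasi-finite, or alternatively from the explicit Temperley–Lieb computation. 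I would organize the writeup as: (i) set up $d,s,e$ and the relation $d^2=s+e$; (ii) dispose of $e=0$ (resp.\ $s=0$) via Lemma~\ref{LemEK}; (iii) apply Lemma~\ref{LemMPI} to force $\wedge^2X$ or $\Sym^2X$ invertible; (iv) invoke Lemma~\ref{LemBEO} to get $N=p^n$ and read off $p$ in each case, with $N=6$ yielding the contradiction.
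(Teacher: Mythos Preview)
Your proposal has a genuine gap: you repeatedly invoke Frobenius--Perron dimension ($\FPdim X = 2\cos(\pi/N)$, $s=\FPdim\Sym^2X$, etc.) and Theorem~\ref{ThmFP}, but $\FPdim$ is only defined for quasi-finite tensor categories, and the whole point of this lemma is to work without that hypothesis. You notice this yourself (``the statement here has no finiteness hypothesis'') but then proceed with the $\FPdim$ bookkeeping anyway. The attempted patch --- ``$\wedge^2X$ is a subquotient of a bounded object and Theorem~\ref{ThmFP} applies after reducing to quasi-finite'' --- is circular: there is no reduction to quasi-finite available, and in any case $\wedge^2X\subset X\otimes X$ is not a subquotient of a bounded object (bounded objects are simple). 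The machinery of Theorem~\ref{Thmpn} and Lemma~\ref{LemMPI} genuinely requires the quasi-finite setting.

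The paper's proof takes a different route that avoids $\FPdim$ entirely. It uses the growth dimension $\gd$ and $\bD(X)=\sqrt{\gd(X^\ast\otimes X)}$, which are defined for arbitrary symmetric tensor categories. The key inequality is
\[
\gd(\Sym^2X)+\gd(\wedge^2X)\;\le\;\gd(X^{\otimes 2})\;=\;\gd(X)^2\;\le\;\bD(X)^2\;=\;4\cos^2(\pi/N),
\]
the last equality coming from Theorem~\ref{ThmDX}. Note this is only an \emph{inequality}, so the exact arithmetic of Lemma~\ref{LemMPI} is not available. Instead, the paper uses the lower bound from \cite[Corollary~5.2]{EK}: the minimal $\gd$ of a non-invertible nonzero object is $\sqrt2$ (and $(1+\sqrt5)/2$ when $p>2$). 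For $N=4$ the bound $s+e\le 2$ together with $s,e\ge 1$ (Lemma~\ref{LemEK} forbids either being zero) forces both invertible; for $N=5$ the bound $s+e\le(3+\sqrt5)/2<1+\sqrt2$ forces one of them invertible. For $N=6$ with $p>2$ the bound $s+e\le 3<2\cdot(1+\sqrt5)/2$ suffices, but $p=2$ requires a separate argument (since $2\sqrt2<3$): the paper rules out the remaining case $\Sym^2X=\wedge^2X$ both simple non-invertible by computing $\gd(\wedge^2X\otimes\wedge^2X^\ast)=9/4$ and checking this is incompatible with the possible values. Once one of $\Sym^2X$, $\wedge^2X$ is invertible, your step~(iv) via Lemma~\ref{LemBEO} is correct and finishes each case.
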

\begin{proof}
We will always reduce to Lemma~\ref{LemBEO}, freely use Theorem~\ref{ThmDX} and the following consequence of \eqref{DefDX} and observation (i) in the proof of Lemma~\ref{Lemgd}
\begin{equation}
\label{neweqse}
s+e\le\gd(X^{\otimes 2})=\gd(X)^2\le \bD(X)^2.
\end{equation}
Here, we abbreviated $s= \gd(\Sym^2 X)$ and $e=\gd(\wedge^2 X)$.

If $X$ is strictly $4$-bounded, then $s+e\le 2$
implies that both $\Sym^2X$ and $\wedge^2 X$ are invertible (as neither can be zero by Lemma~\ref{LemEK}).

If $X$ is strictly $5$-bounded, then
$s+e\le \bD(X)^2 \approx 2.61 $
implies that either $\Sym^2X$ or $\wedge^2 X$ is invertible, since the minimal $\gd$ of a non-invertible object is $\sqrt{2}$, by \cite[Corollary~5.2]{EK}.

Finally, assume that $X$ is strictly $6$-bounded, so $s+e\le 3$. If $p>2$ then either $\Sym^2X$ or $\wedge^2 X$ is invertible, since the minimal $\gd$ of a non-invertible object is then $(1+\sqrt{5})/2\approx 1.62$, by \cite[Corollary~5.2]{EK}.

If $p=2$, then the only other option than $\wedge^2 X$ being invertible is to have both $\wedge^2 X$ and $\Sym^2 X$ simple and hence $\Sym^2 X=\wedge^2 X$.
This means that
$$4\gd(\wedge^2 X\otimes \wedge^2 X^\ast)\;=\;\gd(X^{\otimes 2}\otimes (X^\ast)^{\otimes 2})\;=\;\bD(X)^4=9.$$
Again since the minimal $\gd$ of a non-invertible object is $\sqrt{2}$, by \cite[Corollary~5.2]{EK}, this implies that either $\wedge^2 X\otimes \wedge^2 X^\ast$ is an extension of two invertible objects, or is simple. The latter option implies that $\wedge^2 X\otimes \wedge^2 X^\ast$ is actually $\unit$, the former option that $\gd(\wedge^2 X\otimes \wedge^2 X^\ast)=2$. Both options contradict the last displayed equality.
\end{proof}

\subsection{A non-pivotal example}

In this section we construct a tensor category inspired by the representation theory of the Yangian $Y(\mathfrak{sl}_2)$, see \cite[\S 12.1]{CP}. We let $\bk$ be a field of characteristic $p>3$.

\subsubsection{} We set $\cC=\Ver_p$. We will use the interpretation of $\Ver_p$ as the semisimplification of $\Tilt SL_2$. Labelling the simple objects in $\cC$ as $L_i$ for $0\le i\le p-2$, the object $L_i$ is the image of the simple (tilting) $SL_2$-representation with highest weight $i$. For example, $L_2\in\Ver_p^+$ is the image of the adjoint representation.

 In particular, for every $X\in\cC$, we have the morphism
 $$a_X:\, L_2\otimes X\to X,$$
 which comes from the action of the Lie algebra $\mathfrak{sl}_2=\mathrm{Lie}SL_2$ on $SL_2$-representations. We also normalise the embedding
 $$D:\, L_2\xrightarrow{\sim}\wedge^2 L_2\hookrightarrow L_2^{\otimes 2}$$
 so that composition with the morphism $a_{L_2}:L_2\otimes L_2\to L_2$, coming from the bracket on~$\mathfrak{sl}_2$, is $-\id_{L_2}$.

\subsubsection{} We let $\cY$ be the abelian category of modules in $\Ver_p$ over the tensor algebra of~$L_2$. In other words, objects in $\cY$ are pairs $(X,f)$ consisting of an object $X\in\cC$ and a morphism
$$f:\, L_2\otimes X\,\to\, X.$$
Morphisms in $\cY$ are morphisms in $\cC$ leading to commutative squares.

\subsubsection{}We can define a bifunctor
$$-\otimes -\;:\; \cY\times\cY\to\cY,\quad (X,f)\otimes (Y,g)\;=\; (X\otimes Y, f\otimes Y+X\otimes g +u_{X,Y}),$$
with 
$$u_{X,Y}:=(a_X\otimes a_Y)\circ (L_2\otimes \sigma_{L_2,X}\otimes Y)\circ (D\otimes X\otimes Y).$$
Note that without the $u_{X,Y}$-term, the category $\cY$ would just be the symmetric monoidal category of modules over the (cocommutative Hopf) tensor algebra of $L_2$. The tensor product of two morphisms in $\cY$ can be computed in $\cC$; it is easily verified that the ordinary tensor product of morphisms yields again a morphism in $\cY$. 

It is also a direct exercise to verify that the associativity transformation on $\cC$ lifts to $\cY$, making $(\cY,\otimes, (\unit, 0))$ a monoidal category.

\subsubsection{}\label{Yrig}Finally, $\cY$ is rigid, and hence a tensor category with forgetful tensor functor
$$F:\cY\to\Ver_p.$$
Indeed, we can set
$$(X,f)^\ast\;:=\; (X^\ast, -(\ev_X\otimes X^\ast)\circ (X^\ast\otimes f\otimes X^\ast)\circ (\gamma_{L_2,X^\ast}\otimes \co_X)+a_{X^\ast}).$$
Indeed, a direct computation using the classical Lie algebra representation relation
\begin{equation}\label{eqCompu}(X\otimes a_{X^\ast})\circ(\gamma_{L_2, X}\otimes X^\ast)\circ (L_2\otimes\co_X)\;=\; -(a_X\otimes X^\ast)\circ (L_2\otimes \co_X)
\end{equation}
and our normalisation of $D$ show that $\co_X$ and $\ev_X$ lift to morphisms
$$\unit\to (X,f)\otimes (X,f)^\ast,\quad\mbox{and}\quad (X,f)^\ast\otimes (X,f)\to \unit,$$
and the snake relations are automatically inherited.

\subsubsection{} For any $z\in\bk$, we let $V(z)\in \cY$ denote the object $(L_1, z a_{L_1}).$
As a direct consequence of \eqref{eqCompu}, we have
$$V(z)^\ast\;\simeq\; V(z+1).$$
In particular, $V(z)$ cannot be pivotal.

\begin{prop}\label{PropYang}
Let $z\in\bk$ be arbitrary.
\begin{enumerate}
\item $V(z)\in\cY$ is $p$-bounded.
\item $V(z)\in\cY$ is  not homotopically $p$-bounded.
\end{enumerate}
\end{prop}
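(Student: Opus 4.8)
\emph{Overview.} The plan is to deduce part (1) from functoriality of $N$-boundedness, and part (2) by contradiction from the fact that $V(z)$ is visibly not pivotal together with the parity of $p$.

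\emph{Part (1).} By~\ref{Yrig} the forgetful functor $F\colon\cY\to\Ver_p$ is a tensor functor, and by construction $F(V(z))=L_1$, the generator of $\Ver_p=\Ver_{p^1}$. Theorem~\ref{ThmVer}(1) says $L_1$ is strictly $p$-bounded in $\Ver_p$, so Lemma~\ref{TensorF} (which is an ``if and only if'') gives at once that $V(z)$ is $p$-bounded — in fact strictly $p$-bounded, since $L_1$ is $M$-bounded precisely when $p\mid M$.

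\emph{Part (2).} I would argue by contradiction. Suppose $V(z)$ is homotopically $p$-bounded. Since $p>3$ is an odd prime, Corollary~\ref{LemMustPiv} applies and yields $V(z)^{\ast\ast}\simeq V(z)$ in $\cY$. On the other hand, applying $(-)^\ast$ to the isomorphism $V(z)^\ast\simeq V(z+1)$ (the consequence of~\eqref{eqCompu} recorded when $V(z)$ is introduced), with $z$ replaced by $z+1$, gives $V(z)^{\ast\ast}\simeq V(z+1)^\ast\simeq V(z+2)$. So we would need $V(z+2)\simeq V(z)$ in $\cY$. But $L_1$ is simple, so $\End_{\Ver_p}(L_1)=\bk$ and any morphism $V(z)\to V(z+2)$ in $\cY$ has underlying morphism $\lambda\,\id_{L_1}$ with $\lambda\in\bk$; the commuting square defining such a morphism reads $z\lambda\,a_{L_1}=(z+2)\lambda\,a_{L_1}$, i.e. $2\lambda\,a_{L_1}=0$ in $\Hom_{\Ver_p}(L_2\otimes L_1,L_1)$. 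As $a_{L_1}\neq0$ and $p>2$, this forces $\lambda=0$, so no isomorphism $V(z)\to V(z+2)$ exists; contradiction. Hence $V(z)$ is not homotopically $p$-bounded.

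\emph{The main obstacle.} The only step that is not purely formal — and the place where the hypothesis $p>3$ (rather than just $p>2$) is used — is the non-vanishing of $a_{L_1}\in\Hom_{\Ver_p}(L_2\otimes L_1,L_1)$. I would obtain this from Clebsch--Gordan for $SL_2$: for $p>3$ the weights $1,2,3$ all lie below $p$, so $L_2\otimes L_1\simeq L_3\oplus L_1$ in $\Ver_p$, whence $\Hom_{\Ver_p}(L_2\otimes L_1,L_1)$ is one-dimensional and is spanned by the (nonzero) $\mathfrak{sl}_2$-action $a_{L_1}$ on the standard object. Everything else reduces to bookkeeping with duals and simplicity of $L_1$.
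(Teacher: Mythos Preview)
Your proof is correct and follows the same route as the paper: Part~(1) via Lemma~\ref{TensorF} and Theorem~\ref{ThmVer}(1), Part~(2) via Corollary~\ref{LemMustPiv} together with $V(z)^{\ast\ast}\not\simeq V(z)$. The paper simply asserts ``$V(z)$ cannot be pivotal'' as an immediate consequence of $V(z)^\ast\simeq V(z+1)$, whereas you spell out the verification that $V(z)\not\simeq V(z+2)$ via the nonvanishing of $a_{L_1}$; this extra detail is correct and is indeed where $p>3$ enters.
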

\begin{proof}
Part (1) is a consequence of Lemma~\ref{TensorF} and Theorem~\ref{ThmVer}(1). Part (2) follows immediately from~\autoref{LemMustPiv}.
\end{proof}
%For part (2), we can, precisely as in \ref{Yrig}, verify that there only exist non-zero morphisms
%$$\unit\to V(z_1)\otimes V(z_2)$$
%in $\cY$ when $z_1=z_2-1$, so when $V(z_2)=V(z_1)^\ast$. This precludes existence of an appropriate chain homotopy, by looking at the right-most non-zero term $\unit$ in $\mE_{p-1}(V(z))$.

\subsection{Universal tensor categories}

\begin{question}\label{QUni}
For $N\in\mZ_{>2}\cup\{\infty\}$, is there a tensor category $\cC_N^{bnd}$ over $\bk$, so that for every tensor category $\cC$ over $\bk$ there is a bijection between isomorphism classes of strictly $N$-bounded objects $X\in\cC$ and isomorphism classes of tensor functors $\cC_N^{bnd}\to\cC$?
\end{question}

\begin{example}
We have $\cC_3^{bnd}=\Vecc_{\mZ}$, the category of finite dimensional $\mZ$-graded vector spaces.
\end{example}

\subsubsection{Discussion} There is a recipe for constructing $\cC_4^{bnd}$. Indeed, one can consider the universal pseudo-abelian $\bk$-linear rigid monoidal category generated by one object $X$, see for instance \cite{CSV}, and adjoin inverses to the morphisms in $\mE_3(X)$ and $\mE^3(X)$. The question then becomes whether the resulting category has an abelian envelope. Note that, by the theory of \cite{Sel} or \cite{BK, Kh}, there must exist a family of universal tensor tensor categories associated to this pseudo-abelian category, which together classify strictly $4$-bounded objects. The question is thus whether this family is a singleton.

For $N>4$, this recipe is no longer available, as the above only works for homotopically bounded objects. Starting at $N=5$ this is no longer sufficient, see Proposition~\ref{PropYang}.

\subsubsection{Symmetric case} Clearly, Question~\ref{QUni} has negative answer when we interpret it naively for symmetric tensor categories and functors (unless $\mathrm{char}(\bk)=2$). Indeed, for $N=3$, we have a bijection between symmetric tensor functors $\Vecc_{\mZ}\to\cC$ and invertible objects $L\in\cC$ with $\sigma_{L,L}=\id_{L^{\otimes 2}}$. For the second option, $\sigma_{L,L}=-\id_{L^{\otimes 2}}$, we consider the symmetric tensor category
$$\sVec_{\mZ}\;\subset\;\Vecc_{\mZ}\boxtimes\sVec$$
generated by the tensor product of the generators of the right-hand side. In other words, $\sVec_{\mZ}$ is the tensor category $\Vecc_{\mZ}$ equipped with symmetric braiding for the Koszul sign rule. Then symmetric tensor functors $\sVec_{\mZ}\to\cC$ classify the remaining invertible objects. 

We can extend this to higher bounded objects, at least under the assumption of Conjecture~\ref{Conjpn}.

\begin{prop}
Consider $p=\mathrm{char}(\bk)>2$, $n\in\mZ_{>0}$, and let $\cC$ be a symmetric tensor category over $\bk$.
\begin{enumerate}
\item We have a bijection between symmetric tensor functors $\Ver_{p^n}^+\boxtimes\sVec_{\mZ}\to\cC$ and strictly $p^n$-bounded $X\in\cC$ with $\wedge^2 X$ invertible.
\item We have a bijection between symmetric tensor functors $\Ver_{p^n}^+\boxtimes\Vecc_{\mZ}\to\cC$ and strictly $p^n$-bounded $X\in\cC$ with $\Sym^2 X$ invertible.
\end{enumerate}
\end{prop}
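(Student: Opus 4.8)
The plan is to construct the bijection by hand in each direction and check it is well-defined and inverse to itself. Fix $p=\charr(\bk)>2$ and $n\in\mZ_{>0}$. For part (1), recall from \cite[Theorem~4.79]{BEO} and Theorem~\ref{ThmVer} that $\Ver_{p^n}^+$ is the universal symmetric tensor category generated by an object $Y$ with $\wedge^2 Y$ trivial in an appropriate strong sense (and that the generator $L_1$ of $\Ver_{p^n}$ has $\wedge^2 L_1\simeq\unit$), while $\sVec_\mZ$ is the universal symmetric tensor category on an invertible object $\ell$ with $\sigma_{\ell,\ell}=-\id$. So a symmetric tensor functor $\Ver_{p^n}^+\boxtimes\sVec_\mZ\to\cC$ is the same as a pair: the image $X_0$ of the generator of $\Ver_{p^n}^+$, which satisfies $\wedge^2X_0\simeq\unit$, together with an invertible object $L$ with $\sigma_{L,L}=-\id$; and the assignment to such a functor is the object $X:=X_0\otimes L$.

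The forward map sends a functor $F$ to $X=F(\text{generator})$. We must check $X$ is strictly $p^n$-bounded with $\wedge^2X$ invertible. Boundedness is preserved and reflected by tensor functors (Lemma~\ref{TensorF}), so it suffices to check that the generator of $\Ver_{p^n}^+\boxtimes\sVec_\mZ$ is strictly $p^n$-bounded; this follows from Theorem~\ref{ThmVer}(1), since $L_1$ is strictly $p^n$-bounded in $\Ver_{p^n}$, tensoring with the invertible $\ell$ does not change boundedness (Corollary~\ref{CorTria2}), and $\Ver_{p^n}^+\boxtimes\sVec_\mZ$ embeds into $\Ver_{p^n}\boxtimes\sVec$ compatibly. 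Moreover $\wedge^2(X_0\otimes L)=\wedge^2X_0\otimes L^{\otimes2}\simeq L^{\otimes2}$ (using $\sigma_{L,L}=-\id$ to identify the symmetric square of $L$ with $L^{\otimes 2}$ and the antisymmetric with $0$), which is invertible.

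For the reverse map, start with a strictly $p^n$-bounded $X\in\cC$ with $\wedge^2X$ invertible; set $L:=\wedge^2X$, an invertible object, and $X_0:=X\otimes L^{-1}$. By Conjecture~\ref{Conjpn} (which we are assuming), $\dim X=\pm2$, hence $\sigma_{L,L}=\id$ or $-\id$ according to the Frobenius--Perron/categorical sign; a short computation (comparing $\dim\wedge^2 X$ with $\binom{\dim X}{2}$-type formula, or using that $\wedge^2X=\im(1-\sigma)$) pins down $\sigma_{L,L}=-\id$, so $L$ defines a symmetric tensor functor $\sVec_\mZ\to\cC$. Then $X_0$ has $\wedge^2X_0\simeq\unit$ (with the induced trivialization), and we must produce from $X_0$ a symmetric tensor functor $\Ver_{p^n}^+\to\cC$; this is exactly the content of \cite[Theorem~4.79]{BEO} once we know the higher symmetric powers of $X_0$ vanish in the correct range and $\Sym^{p^n-1}X_0\neq0$ — which is precisely the translation of ``$X$ strictly $p^n$-bounded'' via Remark~\ref{RemSym} (using $\mV^n(X)\simeq\Sym^nX$ when $\wedge^2X\simeq\unit$ is identified with $\co_X$, and using that tensoring by the invertible $L^{-1}$ shifts the boundedness index by a trivial amount). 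The two constructions visibly compose to the identity in both orders because $(X_0\otimes L)\otimes L^{-1}=X_0$ and $\wedge^2(X_0\otimes L)\simeq L$. Part (2) is identical after replacing $\sVec_\mZ$ by $\Vecc_\mZ$ (the universal category on an \emph{even} invertible object $\ell$, $\sigma_{\ell,\ell}=\id$) and $\wedge^2$ by $\Sym^2$: here $X_0:=X\otimes(\Sym^2X)^{-1}$ has $\Sym^2X_0\simeq\unit$, so again \cite[Theorem~4.79]{BEO} applies.

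The main obstacle is the middle step: showing that a strictly $p^n$-bounded object $X$ with $\wedge^2X$ (resp.\ $\Sym^2X$) invertible genuinely yields a symmetric tensor functor out of $\Ver_{p^n}^+$, i.e.\ that the ``universal'' characterization of $\Ver_{p^n}^+$ in \cite[Theorem~4.79]{BEO} matches our boundedness hypothesis on the nose. Concretely one must match the vanishing of $\Sym^{>p^n-1}X_0$ and non-vanishing of $\Sym^{p^n-1}X_0$ with the continuant-complex formulation of strict $p^n$-boundedness, and verify the Frobenius-twist/divided-power structure that \cite{BEO} requires is automatically present; this uses Conjecture~\ref{Conjpn}, Theorem~\ref{ThmConjP}, Remark~\ref{RemSym}, and the dimension computation $\dim X=\pm2$ from Theorem~\ref{ThmSymm1}(2). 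The rest — well-definedness of the $\sVec_\mZ$ (resp.\ $\Vecc_\mZ$) factor and the mutual-inverse check — is routine bookkeeping with invertible objects and braidings.
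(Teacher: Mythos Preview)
Your approach has a genuine gap: the decomposition $X = X_0\otimes L$ you propose does not work as stated. First, the sign claim is wrong. With $p>2$ and $\wedge^2 X$ invertible, the dimension formula $\dim\wedge^2 X=\tfrac{\dim X(\dim X-1)}{2}$ forces $\dim X=2$ (the alternative $\dim X=-2$ gives $\dim\wedge^2 X=3\neq\pm1$), hence $\dim L=1$ and $\sigma_{L,L}=+\id$, not $-\id$. Consequently $L=\wedge^2 X$ does \emph{not} give a functor $\sVec_\mZ\to\cC$. Second, even after fixing the sign, your arithmetic fails: for an even invertible $L$ one has $\wedge^2(X\otimes L^{-1})=\wedge^2 X\otimes L^{-2}=L^{-1}$, not $\unit$; and for an odd $L'$ one has $\wedge^2(X\otimes (L')^{-1})=\Sym^2 X\otimes (L')^{-2}$, again not $\unit$ in general. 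The object you actually need to produce is an \emph{odd square root} of $\wedge^2 X$, not $\wedge^2 X$ itself, and there is no direct recipe for this square root inside $\cC$ from $X$ alone. (Relatedly, the generator of $\Ver_{p^n}^+$ has $\Sym^2\simeq\unit$, not $\wedge^2\simeq\unit$, so your description of the universal property of $\Ver_{p^n}^+$ is also off.)

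The paper sidesteps all of this: it invokes \cite[Theorem~4.79]{BEO} as a black box stating that the universal symmetric tensor category on an object with $\wedge^2$ invertible (and the right boundedness) is a specific $\cC'_\mZ$, and then identifies $\cC'_\mZ$ with $\Ver_{p^n}^+\boxtimes\sVec_\mZ$ via the equivalence $\Ver_{p^n}\simeq\Ver_{p^n}^+\boxtimes\sVec$, $L_1\mapsto(L_1\otimes\bar\unit)\boxtimes\bar\unit$. No explicit splitting of $X$ inside $\cC$ is needed, and no assumption of Conjecture~\ref{Conjpn} is required. For part (2) the paper does not argue ``identically'' but instead reduces to part (1) by passing to $\cC\boxtimes\sVec$ (so that $X\boxtimes\bar\unit$ has $\wedge^2$ invertible), then descends back; your symmetric treatment of (2) inherits the same errors as (1).
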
 
\begin{proof}
A classification of objects with $\wedge^2 X$ invertible is given in \cite[Theorem~4.79]{BEO}. In particular it follows that the categories of the form ``$\cC'_{\mZ}$'' are the universal symmetric tensor category generated by such an object. For $p^n$, this category $\cC'_{\mZ}$ is defined as the tensor subcategory of
$$\Ver_{p^n}\boxtimes \Vecc_{\mZ}$$
generated by $L_1\boxtimes g$ with $g$ a one-dimensional vector space in degree 1. Using 
$$\Ver_{p^n}\;\simeq\; \Ver_{p^n}^+\boxtimes \sVec, \quad L_1\mapsto (L_1\otimes\bar{\unit})\boxtimes \bar{\unit}$$
shows that $\cC'_{\mZ}$ is $\Ver_{p^n}^+\boxtimes\sVec_{\mZ}$, proving part (1). Note that we write $\bar{\unit}$ for the odd line in $\sVec\subset\Ver_{p^n}$.

We can reduce part (2) to part (1). First assume that we have $X\in\cC$ with $\Sym^2 X$ invertible. Via part (1) we obtain a symmetric tensor functor
$$\Ver^+_{p^n}\boxtimes\sVec_{\mZ}\;\to\; \cC\boxtimes\sVec,\quad (L_1\otimes\bar{\unit})\boxtimes g\mapsto X\boxtimes\bar{\unit}.$$
Taking the tensor product on both sides with $\sVec$ and using two obvious equivalences yields a symmetric tensor functor
$$\Ver^+_{p^n}\boxtimes\Vecc_{\mZ}\boxtimes\sVec\;\to\; \cC\boxtimes\Vecc_{\mZ/2}\boxtimes\sVec,\quad (L_1\otimes\bar{\unit})\boxtimes g\boxtimes \unit\mapsto X\boxtimes g\boxtimes \unit,$$
where we keep using $g$ for generators of cyclic pointed categories.
Of course, now the displayed action no longer determines the functor, but it shows that we can restrict the functor to subcategories, and compose with the functor forgetting grading,
$$\Ver^+_{p^n}\boxtimes\Vecc_{\mZ}\;\to\; \cC\boxtimes\Vecc_{\mZ/2}\;\to\;\cC,\quad (L_1\otimes\bar{\unit})\boxtimes g\mapsto X\boxtimes g\mapsto X.$$
Hence, we find a suitable functor. To show that an object leads to a unique functor, we can again reduce to part (1). Indeed, for two functors from $\Ver_{p^n}^+\boxtimes\Vecc_{\mZ}$ to $\cC$ we can consider two functors from $\Ver_{p^n}^+\boxtimes\sVec_{\mZ}$ to $\cC\boxtimes\sVec$.
\end{proof}

\begin{remark}
Similarly, for $p=2$ and $n\in\mZ_{>1}$, strictly $2^n$-bounded objects $X$ (with $\wedge^2 X$ invertible, as is always satisfied conjecturally) correspond to symmetric tensor functors out of the tensor subcategory of
$\Ver_{2^n}\boxtimes \Vecc_{\mZ}$
generated by the tensor product of the generators.
\end{remark}

\section{Finite symmetric and exterior algebras}\label{SecFiniteP}

Let $\cC$ be a symmetric tensor category over an algebraically closed field $\bk$ with $p=\mathrm{char}(\bk)>2$.

\subsection{Maximal degrees}

\subsubsection{} For $X\in\cC$, we set
$$m(X):=\sup\{m\in\mN\mid \Sym^m X\not=0\}\;\in\;\mN\cup\{\infty\},$$
and similarly
$$n(X):=\sup\{n\in\mN\mid \wedge^n(X)\not=0\}\;\in\;\mN\cup\{\infty\}.$$
We can characterise $m(X)$ alternatively.

\begin{lemma}\label{LemSymInv}
Take $0\not=X\in\cC$ which is not invertible. There is at most one $m\in\mZ_{>0}$ for which $\Sym^mX$ is invertible. If such $m$ exists, then $m=m(X)$; if such $m$ does not exist, then $m(X)=\infty$.
\end{lemma}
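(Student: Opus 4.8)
The plan is to leverage the theory of continuant complexes developed in the excerpt, together with the dual complexes $\mE^n(X)$, by exploiting the identification $\mV^n(X) = H_0(\mE^n(X))$. First I would recall from Remark~\ref{RemSym} that when $\wedge^2 X \simeq \unit$ (identified with $\co_X$), one has $\mV^n(X) \simeq \Sym^n X$, so the exact triangles \eqref{DefTria} for the $\mE^n(X)$ relate consecutive symmetric powers. However, $\wedge^2 X$ need not be trivial here, so instead I would work directly with the short exact sequences defining $\Sym^n X$ and a multiplication map. The key structural input is: a surjection $\Sym^1 X \otimes \Sym^{n-1} X = X \otimes \Sym^{n-1} X \tto \Sym^n X$ and, dually, the coalgebra-type comultiplication; these fit into exact sequences whose failure to be isomorphisms is governed by $\wedge^2 X$ sitting inside.

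The heart of the argument is the uniqueness of invertible symmetric powers. Suppose $\Sym^m X$ is invertible for some $m \geq 1$; I want to show $\Sym^{m+1}X = 0$, which forces $m(X) = m$ and rules out any other invertible $\Sym^{m'}X$ with $m' > m$, while the case $m' < m$ would contradict $\Sym^m X \neq 0$ together with the surjections $X\otimes \Sym^{m'-1}X \tto \cdots$. The step $\Sym^m X$ invertible $\Rightarrow \Sym^{m+1}X = 0$ is where I expect to invoke \cite[Theorem~4.79]{BEO} (as in Lemma~\ref{LemBEO} and Remark~\ref{RemSym}): passing to $\cC \boxtimes \sVec$ if necessary to reduce to $\wedge^2 X$ invertible, the classification there says the symmetric tensor category generated by such an $X$ is a subcategory of some $\Ver_{p^n}$, where the symmetric powers are explicitly known to vanish beyond a definite degree and where exactly one symmetric power is invertible. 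Pulling this back along the generating tensor functor (which is faithful and exact, Lemma~\ref{TensorF}) transports both the vanishing $\Sym^{m+1}X = 0$ and the uniqueness statement to $\cC$.

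For the final clause — if no invertible $\Sym^m X$ exists then $m(X) = \infty$ — I would argue by contraposition: if $m(X) = m < \infty$, so $\Sym^m X \neq 0$ but $\Sym^{m+1}X = 0$, then again reduce via $\cC\boxtimes\sVec$ to $\wedge^2 X$ invertible (note $\wedge^2 X = 0$ is excluded: by Lemma~\ref{LemEK} that would make $X$ invertible, contrary to hypothesis; and $\wedge^2 X$ of larger dimension is incompatible with $m(X)$ finite by the $\Ver_{p^n}$ classification once one checks via \cite[Theorem~4.79]{BEO} that finiteness of $m(X)$ pins down the generated category). Then in $\Ver_{p^n}$, having $\Sym^{m+1} = 0$ while $\Sym^m \neq 0$ exactly characterizes the top symmetric power, which is invertible there; transporting back, $\Sym^m X$ is invertible.

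The main obstacle I anticipate is making the reduction to $\wedge^2 X$ invertible fully rigorous and then correctly reading off from \cite[Theorem~4.79]{BEO} and the structure of $\Ver_{p^n}$ the precise facts needed (that the top nonzero symmetric power is invertible, that it is the unique invertible one, and that these properties are detected after tensoring with $\sVec$ and after applying the faithful exact generating functor). A secondary subtlety is handling the genuinely possible case $\wedge^2 X \not\simeq \unit$ but still invertible, where the clean identification $\mV^n(X)\simeq\Sym^n X$ of Remark~\ref{RemSym} is unavailable and one must instead rely entirely on the external classification rather than on the continuant-complex machinery.
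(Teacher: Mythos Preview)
Your proposal has a genuine gap: the reduction to ``$\wedge^2 X$ invertible'' does not go through. Passing to $\cC\boxtimes\sVec$ and replacing $X$ by $X\boxtimes\bar{\unit}$ only swaps the roles of symmetric and exterior powers; it does not force $\wedge^2 X$ to become invertible. But the hypothesis of \cite[Theorem~4.79]{BEO} is precisely that $\wedge^2 X$ is invertible, so you cannot invoke it to classify the tensor category generated by $X$ unless you already know this. Your attempt to argue that ``$\wedge^2 X$ of larger dimension is incompatible with $m(X)$ finite by the $\Ver_{p^n}$ classification'' is circular: that classification is only available once \cite[Theorem~4.79]{BEO} applies. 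Nothing in the hypotheses of the lemma rules out, say, $\wedge^2 X$ of length two, and in that regime your entire strategy collapses.

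The paper's proof avoids this issue altogether and does not pass through $\Ver_{p^n}$. For the direction ``$m(X)<\infty\Rightarrow\Sym^{m(X)}X$ invertible'' it simply cites \cite[Lemma~7.4.5]{CEO3}. For the key implication ``$\Sym^m X$ invertible $\Rightarrow\Sym^{m+1}X=0$'', it treats simple $X$ by citing \cite[Corollary~7.4.4]{CEO3}, and for non-simple $X$ it picks a short exact sequence $0\to Y\to X\to Z\to 0$ with $Y,Z\neq 0$ and analyses the associated graded of the induced filtration on $\Sym^d X$, computed via \cite[Theorem~3.2.2]{CEO3} as $\bigoplus_i(\Sym^iY)/I^i\otimes\Sym^{d-i}Z$. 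Invertibility of $\Sym^d X$ then forces enough vanishing among these pieces to conclude $\Sym^{d+1}X=0$. No hypothesis on $\wedge^2 X$ is needed, and neither the continuant complexes nor the $\Ver_{p^n}$ classification enter.
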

\begin{proof}
As is shown in \cite[Lemma~7.4.5]{CEO3}, for any $X$, if $m\in\mN$ is the maximal degree for which $\Sym^m X\not=0$, then $\Sym^mX$ is invertible.

To conclude the proof we thus need to show the converse claim: If $X\not=0$ is not invertible, but $\Sym^mX$ is invertible for $m\in\mZ_{>0}$, then $\Sym^{m+1}X=0$.
If $X$ is simple (and non-invertible) this claim is proved in \cite[Corollary~7.4.4]{CEO3}. Consider therefore a short exact sequence in $\cC$
$$0\to Y\to X\to Z\to 0$$
with both $Y$ and $Z$ non-zero. If we consider the filtration on $\Sym^d X$ induced by the short exact sequence, then \cite[Theorem~3.2.2]{CEO3} implies that
\begin{eqnarray*}
\gr \Sym^d X&\simeq& \bigoplus_{i=0}^d(\Sym^i Y)/I^i\otimes \Sym^{d-i}Z\\
&\simeq&\Sym^dZ\oplus(Y\otimes\Sym^{d-1}Z)\oplus\cdots \oplus(Z\otimes (\Sym^{d-1} Y)/I^{d-1})\oplus (\Sym^d Y)/I^d,
\end{eqnarray*}
where $I=\oplus_iI^i$ is the kernel of the graded (by degree) algebra morphism $\Sym Y\to\Sym X$.
If $\Sym^d X$ is invertible for $d>1$, then analysing the first two terms shows that $\Sym^dZ=0$, so there exists $1\le m< d$ for which $\Sym^i Z\not=0$ for $i\le m$ and $\Sym^iZ=0$ for $i> m$. Invertibility of $\Sym^dX$ then also implies that $I^{d-m+1}=\Sym^{d-m+1}Y$, showing that $\Sym^{d+1}X=0$.
\end{proof}

\begin{remark}
The relation in Lemma~\ref{LemSymInv} for symmetric powers also exists between the invertibility and vanishing of $\wedge^d X$. This can either be proved with the same approach, but also follows as a consequence of Lemma~\ref{LemSymInv}, considering $X^\ast\boxtimes\bar{\unit}$ in $\cC\boxtimes\sVec$.
\end{remark}

\subsection{Connection with boundedness}

\begin{example}Consider $X\in\cC$.
\begin{enumerate}
\item $X=0$ if and only if $X$ is 2-bounded if and only if $(m(X),n(X))=(0,0)$.
\item $X$ is invertible if and only if $X$ is 3-bounded if and only if $(m(X),n(X))$ is $(\infty,1)$ or $(1,\infty)$.
\end{enumerate}
\end{example}

At least conjecturally, all boundedness can be described in terms of $m(X)$ and $n(X)$.

\begin{prop}
Let $\cC$ be as in Theorem~\ref{Thmpn}. Then $X\in \cC$ is strictly $N$-bounded, for $N>3$ if and only if $(m(X),n(X))$ is $(N-2,2)$ or $(2,N-2)$.
\end{prop}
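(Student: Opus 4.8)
The plan is to combine Theorem~\ref{Thmpn} (which, under the stated hypothesis on $\cC$, reduces everything to invertibility of $\Sym^2 X$ or $\wedge^2 X$ plus the fact that $N = p^n$) with Remark~\ref{RemSym} and the characterisation of $m(X)$ in Lemma~\ref{LemSymInv}. The strategy splits according to which of $\Sym^2 X$, $\wedge^2 X$ is invertible, and the two cases are interchanged by passing to $X^\ast\boxtimes\bar\unit$ in $\cC\boxtimes\sVec$, so I will only treat one.

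First I would record the easy direction. Suppose $(m(X),n(X)) = (N-2,2)$ with $N>3$; then $\wedge^2 X\neq 0$ and $\wedge^3 X = 0$, so by Lemma~\ref{LemSymInv} (applied in the exterior-power version, see the Remark after it, or directly) $\wedge^2 X$ is invertible, and $X$ is neither zero nor invertible. By Theorem~\ref{Thmpn} such an $X$ is strictly $M$-bounded for some $M>3$, and by Remark~\ref{RemSym} (noting $\wedge^2 X$ invertible lets us reduce to $\Ver_{p^n}$ via \cite[Theorem~4.79]{BEO}, after tensoring with $\sVec$ to arrange $\wedge^2 X\simeq\unit$ identified with $\co_X$) we have $\mV^m(X)\simeq\Sym^m X$, hence $M = 2 + m(X) = N$. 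The case $(2,N-2)$ follows by the duality $X\mapsto X^\ast\boxtimes\bar\unit$, which swaps symmetric and exterior powers and preserves strict $N$-boundedness (by Lemma~\ref{TensorF} and the fact that tensoring with $\sVec$ and dualising are tensor functors).

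For the converse, suppose $X$ is strictly $N$-bounded with $N>3$. By Theorem~\ref{Thmpn}, Conjecture~\ref{Conjpn}(1) holds: either $\Sym^2 X$ or $\wedge^2 X$ is invertible, and $N = p^n$. Say $\wedge^2 X$ is invertible (the other case is symmetric). Then $X$ is not zero and not invertible (it is strictly $N$-bounded with $N>3$), so by Lemma~\ref{LemSymInv} applied to $\wedge$-powers, $n(X) = 2$, i.e.\ $\wedge^2 X$ is the top non-vanishing exterior power. It remains to identify $m(X)$ with $N-2$: by Remark~\ref{RemSym}, reducing to $\Ver_{p^n}$, strict $N$-boundedness gives $N = 2 + \sup\{n\mid \Sym^n X\neq 0\} = 2 + m(X)$, so $m(X) = N-2$ and $(m(X),n(X)) = (N-2,2)$. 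If instead $\Sym^2 X$ is invertible we get $(m(X),n(X)) = (2,N-2)$ by the same argument applied to the dual side.

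\textbf{Main obstacle.} The substantive point is not a calculation but making sure the reduction to $\Ver_{p^n}$ in Remark~\ref{RemSym} is legitimate in the generality of Theorem~\ref{Thmpn} --- namely that \cite[Theorem~4.79]{BEO} applies to any $X$ (in a symmetric tensor category admitting a symmetric tensor functor to a quasi-finite one) with $\wedge^2 X$ invertible, yielding a symmetric tensor functor from some $\cC'_\mZ$, along which $m(X)$ and $n(X)$ and strict $N$-boundedness are all computed faithfully (using Lemma~\ref{TensorF} for $N$-boundedness, and exactness/faithfulness of tensor functors for the vanishing of symmetric and exterior powers). Once that bookkeeping is in place the identification $N = 2 + m(X)$ (or $2 + n(X)$) is immediate from $\mV^n(X)\simeq\Sym^n X$ together with Theorem~\ref{ThmConjP}, and everything else is the symmetry $X\leftrightarrow X^\ast\boxtimes\bar\unit$.
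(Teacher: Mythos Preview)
Your proposal is correct and follows essentially the same route as the paper, whose proof reads in full ``This follows from Theorem~\ref{Thmpn} and Remark~\ref{RemSym}.'' Two minor remarks: your ``main obstacle'' is not one, since \cite[Theorem~4.79]{BEO} applies in an arbitrary symmetric tensor category (no finiteness is needed), so Remark~\ref{RemSym} is available in the stated generality; and in your easy direction the appeal to Theorem~\ref{Thmpn} is superfluous---once $\wedge^2 X$ is invertible, Remark~\ref{RemSym} alone already yields that $X$ is strictly $(2+m(X))$-bounded.
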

\begin{proof}
This follows from Theorem~\ref{Thmpn} and Remark~\ref{RemSym}.
\end{proof}

\subsubsection{} Motivated by the proposition, for $X\in\cC$ we set
$$\mathbf{N}(X):=m(X)+n(X)\in\mN\cup\{\infty\}.$$
If $X$ is strictly $N$-bounded, for $3<N$, then in particular $\mathbf{N}(X)=N$.

\begin{remark}
We could have replaced symmetric powers by divided powers and/or replaced $\wedge^mX$ by $\wedge^m(X^\ast)^\ast$ in the definitions of $m(X)$ and $n(X)$. In general, this would lead to different numbers, see \cite[\S 10]{CEO3}.
\end{remark}

\begin{question}\label{QueKoszul}
Consider $X\in\cC$ with finite $m=m(X)$ and $n=n(X)$. Is the symmetric algebra $\Sym X$ an $(m,n)$-Koszul algebra, according to \cite[Definition~5.3]{Et}?
\end{question}
The case of the bounded objects has an affirmative answer: If $m=m(X)<\infty$ and $n(X)=2$, then $\Sym X$ is $(m,2)$-Koszul, by \cite[Corollary~6.6(ii)]{BE} and \cite[Proposition~4.79]{BEO}. The case $m(X)=2$ then follows from \cite[Proposition~5.4]{Et}. %Similarly, the case $\mathbf{N}(X)=p$ of Question~\ref{QueKoszul} has an affirmative answer by Theorem~\ref{Thmmnp} below and \cite[Theorem~5.5]{Et}.

\begin{lemma}\label{LemDim}
For every $0\not=X$ we have $\mathbf{N}(X)\ge p$. In fact, $\mathbf{N}(X)$ is a multiple of $p$.
\end{lemma}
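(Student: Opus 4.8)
The plan is to derive the divisibility from the exactness, in weights prime to $p$, of the algebraic de Rham complex of the symmetric algebra $\Sym X$, using the explicit Euler contraction homotopy (Cartan's magic formula).

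First I would reduce. If $m(X)=\infty$ or $n(X)=\infty$, then $\mathbf{N}(X)=\infty$ and there is nothing to prove, so assume $m:=m(X)$ and $n:=n(X)$ are finite and put $N:=m+n$. Since $X\neq 0$ we have $\Sym^1 X=X\neq 0\neq X=\wedge^1 X$, hence $m,n\ge 1$ and $N\ge 2$; therefore, once we know $p\mid N$ it follows automatically that $N\ge p$, the least positive multiple of $p$ being $p$. Thus the whole statement reduces to proving $p\mid N$.

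Next I would consider the algebraic de Rham complex of the (free commutative) algebra $\Sym X=\bigoplus_i\Sym^i X$; it is graded by weight, with $\Omega^k$ having its exterior factor in weight $k$, and its weight-$N$ component is the finite complex in $\cC$
$$0\to\Sym^N X\xrightarrow{d}\Sym^{N-1}X\otimes\wedge^1 X\xrightarrow{d}\cdots\xrightarrow{d}\Sym^1 X\otimes\wedge^{N-1}X\xrightarrow{d}\wedge^N X\to 0,$$
with $\Sym^{N-k}X\otimes\wedge^k X$ in cohomological degree $k$. Contracting $d$ against the Euler vector field gives morphisms $h\colon\Sym^{N-k}X\otimes\wedge^k X\to\Sym^{N-k+1}X\otimes\wedge^{k-1}X$, and Cartan's formula $L_E=d\iota_E+\iota_E d$ — a universal identity valid over $\mZ$, hence in any symmetric tensor category, with $L_E$ acting on weight-$N$ forms by multiplication by $N$ — yields $dh+hd=N\cdot\id$ on this complex. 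Consequently, if $p\nmid N$ then $N\in\End(\unit)=\bk$ is invertible, $h/N$ is a contracting homotopy, and the weight-$N$ complex is exact. Here I would lean on \cite{CEO3,Et} for the careful construction of this complex in the characteristic-$p$ tensor-categorical setting; the distinction between the various divided/ordinary and sub/quotient versions of $\Sym^k$ and $\wedge^k$ is genuine in positive characteristic (see the remark after Lemma~\ref{LemSymInv} and \cite[\S 10]{CEO3}), and reconciling the powers that appear in the de Rham complex with the ones used to define $m(X)$ and $n(X)$ — possibly by also running the dual argument with the de Rham complex of $\Sym X^\ast$ (whose dual is the divided-power de Rham complex) — is exactly the point requiring care.

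Finally I would read off the contradiction. By definition of $m$ and $n$, the term $\Sym^{N-k}X\otimes\wedge^k X$ vanishes unless $N-k\le m$ and $k\le n$, i.e.\ unless $k=n$; hence the weight-$N$ de Rham complex has at most one nonzero term, namely $\Sym^m X\otimes\wedge^n X$ in degree $n$. If $p\nmid N$ this complex is exact, forcing $\Sym^m X\otimes\wedge^n X=0$; but $\Sym^m X$ and $\wedge^n X$ are both nonzero, and in a tensor category the tensor product of nonzero objects is nonzero (the functor $-\otimes\wedge^n X$ is faithful when $\wedge^n X\neq 0$). This contradiction gives $p\mid N$, which finishes the proof. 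The main obstacle is the third paragraph: setting up the de Rham complex correctly in this setting and verifying that the Euler contraction furnishes the homotopy $dh+hd=N\cdot\id$ with the \emph{right} (co)symmetric and (co)exterior powers; everything else is formal.
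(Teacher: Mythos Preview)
Your approach is correct in spirit and genuinely different from the paper's. The paper's proof is a two-line application of the $p$-adic dimensions of \cite{EHO}: when $m(X),n(X)<\infty$ one has $\Dim_+(X)=-m(X)$ and $\Dim_-(X^\ast)=n(X)$ in $\mZ_p$, and both reduce to $\dim X=\dim X^\ast$ modulo~$p$, whence $m(X)+n(X)\equiv 0\pmod p$. Your argument via the weight-$N$ de Rham complex and the Euler contraction is more elementary and self-contained.

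The obstacle you flag --- that the paper defines $m(X)$ via the \emph{quotient} $\Sym^kX$ but $n(X)$ via the \emph{sub} $\wedge^l X$, whereas the textbook de Rham complex uses matching conventions --- is real, and your proposed workaround via the dual complex does not close the gap: dualising the de Rham complex of $\Sym X^\ast$ yields a complex in $\Gamma^\bullet X\otimes\wedge^\bullet X$, which gives $m(X^\ast)+n(X)\equiv 0\pmod p$, not $m(X)+n(X)\equiv 0$; and $m(X)$ need not equal $m(X^\ast)$. However, the obstacle dissolves by a direct construction of the Euler homotopy on the \emph{mixed} complex $\Sym^\bullet X\otimes\wedge^\bullet X$ itself. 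Take $\delta\colon\Sym^k\to\Sym^{k-1}\otimes X$ the coproduct component, $m\colon\Sym^k\otimes X\to\Sym^{k+1}$ the product, $\epsilon\colon\wedge^l\hookrightarrow X\otimes\wedge^{l-1}$ the inclusion, and $\mu\colon X\otimes\wedge^l\to\wedge^{l+1}$ the signed shuffle $\sum_{i=0}^l(-1)^i c_i$ (which lands in the anti-invariants since $p>2$). One checks diagrammatically the identities $m\delta=k\cdot\id$, $\mu\epsilon=l\cdot\id$, $\delta m=\id+(m\otimes\id)(\id\otimes\sigma)(\delta\otimes\id)$ and $\epsilon\mu=\id-(\id\otimes\mu)(\sigma\otimes\id)(\id\otimes\epsilon)$; then in $dh+hd$ the two cross terms act on disjoint tensor factors and cancel, leaving $(k+l)\cdot\id$. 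With this refinement your argument goes through verbatim. The paper's route is shorter because it outsources the work to \cite{EHO}; yours trades that citation for an explicit in-category computation.
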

\begin{proof}
We use the $p$-adic dimensions $\Dim_{\pm}\in\mZ_p$ from \cite{EHO}. If $\mathbf{N}(X)<\infty$, then
$$\Dim_+(X)=-m(X)\quad\mbox{and}\quad\Dim_-(X^\ast)=n(X).$$
Both these dimensions should reduce to the same element $\dim(X)=\dim(X^\ast)$ of the prime field $\mF_p\subset\bk$ under $\mZ\subset\mZ_p\tto\mF_p$. Hence, their difference $m(X)+n(X)$ is divisible by $p$.
\end{proof}

\subsection*{Acknowledgement}

The authors thank Mikhail Kapranov for sharing and explaining the work in \cite{DKS}. The first author thanks Hohto Bekki for a discussion leading to the current proof of Lemma~\ref{LemMPI} and MPIM Bonn for excellent working conditions.

K. C.'s work was partially supported by the ARC grants DP210100251 and FT220100125. P. E.'s work was partially supported by the NSF grant DMS - 2001318.

\end{document}